%%% IDEE POUR LA STABILITE. RETOURNER UN SYSTEME DE PARTICULES
\documentclass[10pt]{article}   
\usepackage{amsfonts,amsmath,latexsym}
\usepackage[T1]{fontenc}
\usepackage{dsfont}
\usepackage{comment}
\usepackage{hyperref}
\usepackage[dvips]{graphicx}
\usepackage{epsfig}
\usepackage{enumerate}
\usepackage{epsf}   
\usepackage{amsthm}
\usepackage{color}
\usepackage{amssymb}
\usepackage{comment}
\usepackage{caption}
\usepackage{fancyhdr} 

\usepackage{palatino}

\newtheorem{thm}{Theorem}[section]
\newtheorem{prop}[thm]{Proposition}
\newtheorem{lem}[thm]{Lemma}
\newtheorem{corro}[thm]{Corollary}
\newtheorem{defi}[thm]{Definition}
\newtheorem{rem}[thm]{Remark}

\newtheorem{ass}{Assumption}
\newtheorem{property}{Property}
\newtheorem{notation}{Notation}

%%%%%%%%%%%%%%%%%%%%%%%%%%%%%%%%%%%%%%%%%%%%%%%%%%%%

\newenvironment{prooff}[1]{\begin{trivlist}
\item {\it \bf Proof}\quad} {\qed\end{trivlist}}

\def\R{\mathbb R}
\def\N{\mathbb N}

\def\E{\mathbb E}
\def\P{\mathbb P}
\def\Q{\mathbb Q}
\def\sha{{\cal A}}

\def\shc{{\cal C}}
\def\shd{{\cal D}}
\def\she{{\cal E}}
\def\shf{{\cal F}}
\def\shg{{\cal G}}
\def\shm{{\cal M}}

\def\shl{{\cal L}}
\def\shp{{\cal P}}
\def\shs{{\cal S}}

\def \be{\begin{equation*}}
\def \ee{\end{equation*}}
\def\restriction#1#2{\mathchoice
              {\setbox1\hbox{${\displaystyle #1}_{\scriptstyle #2}$}
              \restrictionaux{#1}{#2}}
              {\setbox1\hbox{${\textstyle #1}_{\scriptstyle #2}$}
              \restrictionaux{#1}{#2}}
              {\setbox1\hbox{${\scriptstyle #1}_{\scriptscriptstyle #2}$}
              \restrictionaux{#1}{#2}}
              {\setbox1\hbox{${\scriptscriptstyle #1}_{\scriptscriptstyle #2}$}
              \restrictionaux{#1}{#2}}}
\def\restrictionaux#1#2{{#1\,\smash{\vrule height .8\ht1 depth .85\dp1}}_{\,#2}} 

\author{
{\sc Lucas IZYDORCZYK}
\thanks{ENSTA Paris, Institut Polytechnique de Paris.
Unit\'e de Math\'ematiques Appliqu\'ees (UMA).
 E-mail:{ \tt lucas.izydorczyk@ensta-paris.fr}} 
{\sc,}\ {\sc Nadia OUDJANE}
\thanks{EDF R\&D,   and FiME (Laboratoire de Finance des March\'es de l'Energie
(Dauphine, CREST,  EDF R\&D) www.fime-lab.org). 
E-mail:{\tt  
nadia.oudjane@edf.fr}}
 {\sc, Francesco RUSSO,} 
\thanks{ENSTA Paris, Institut Polytechnique de Paris.
Unit\'e de Math\'ematiques Appliqu\'ees (UMA).
E-mail (corresponding author): {\tt russo@math.univ-paris13.fr} }
 \\
{\sc and}\ {\sc Gianmario TESSITORE}
\thanks{Universit\`a Milano-Bicocca.
Dipartimento di Matematica.
E-mail:{\tt  gianmario.tessitore@unimib.it}
}}

\date{September 5th 2021}

\title{Fokker-Planck equations with terminal
condition and related McKean probabilistic representation}

\oddsidemargin -0.5cm 
\textwidth 6.5in 
\textheight 23cm
\topmargin -2cm 

\fancyhf{}

\cfoot{\thepage} 

% Pour placer les figures au bon endroit
\newcommand{\MBFigure}[6]{
$\left. \right.$ \\
\refstepcounter{figure}
\addcontentsline{lof}{figure}{\numberline{\thefigure}{\ignorespaces #5}}
\begin{center}
\begin{minipage}{#1cm}
\centerline{\includegraphics[width=#2cm,angle=#3]{#4}}
\begin{center}
\upshape{F\textsc{ig} \normal
\end{center}
size{\thefigure}. $-$} #5
\end{center}
\label{#6}
\end{minipage}
\end{center}
$\left. \right.$ \\}

%%%%%%%%%%%%%%%%%%%%%%%%%%%%%%%%%%%%%%%%%%%%%%%%%%%%%%%%%%%%%%%%%
%
%%%%%%%%%%%%%%%%%%%%%%%%%%%%%%%%%%%%%%%%%%%%%%%%%%%%%%%%%%%%%%%%%%

\begin{document}
\maketitle

\begin{abstract}
  Usually Fokker-Planck type partial differential equations (PDEs)
 are well-posed if the initial condition
  is specified. In this paper, alternatively, we consider the inverse
  problem which consists in 
  prescribing final data: in particular we give sufficient conditions
  for uniqueness.
  In the second part of the paper we provide a probabilistic
  representation of those PDEs in the form of
  a solution of a McKean type equation
  corresponding to the time-reversal dynamics of a diffusion process.
\end{abstract}

\medskip\noindent {\bf Key words and phrases}.  
Inverse problem; McKean stochastic differential equation;  probabilistic representation of PDEs; time-reversed diffusion; Fokker Planck equation.

\medskip\noindent  {\bf 2020  AMS-classification}.
60H10; 60H30; 60J60; 35R30.
%65C05; 65C35;
% 68U20; 
%35K58.
 
 % % % % % % % % % % % % % % % % % % % % % % % % % % % % % % %
 
\section{Introduction}

\medskip

The main objective of the paper consists in studying well-posedness
and probabilistic representation of the  Fokker-Planck PDE
with terminal condition
\begin{equation} \label{EDPTerm0}
\left \{
\begin{array}{lll}
\partial_t {\bf u} &=& \frac{1}{2} 
\displaystyle{\sum_{i,j=1}^d} \partial_{ij}^2 \left( (\sigma \sigma^{\top})_{i,j}(t,x) {\bf u} \right) - div \left( b(t,x) {\bf u} \right)\\
% && \ , \quad \textrm{for}\  t\in ]0,T[\ ,\\
{\bf u}(T) &=& \mu,
\end{array}
\right .
\end{equation}
where $\sigma: [0,T] \times \R^d  \rightarrow  M_{d}(\R)$,
$b: [0,T] \times \R^d  \rightarrow  \R^d$ and
$\mu$ is a prescribed finite Borel (most often non-negative)  measure on $\R^d$.
When ${\bf u}(t)$ admits a density for some $t \in [0,T]$
we write  ${\bf u}(t) = u(t,x) dx$.
This equation is motivated  
%Backward simulation of diffusions is also the subject of active research 
%in various domains of
by applications in various domains of physical sciences and engineering,
as heat 
conduction~\cite{beck1985inverse},
 material science~\cite{ renardy1987mathematical} or
 hydrology~\cite{bagtzoglou2003marching}.
 In particular, \textit{hydraulic inversion} is interested in inverting a diffusion process representing the concentration of a pollutant to identify the pollution source location when the final concentration profile is observed.
 Those models give often rise to ill-posed problems because,
%are often formulated by PDE problems which are
 %in general ill-posed because,
 either the solution is not unique or
it is not stable.
In this specific case, the existence is ensured by
 the fact  that the observed contaminant is necessarily originated
  at a given time (as soon as the model is correct). 
Several authors have handled the lack of uniqueness
 by introducing regularization methods and
approaching the problem using well-posed PDEs, see typically
\cite{tikhonov1977solutions}  and \cite{lattes1969method}.
In particular for the PDE \eqref{EDPTerm0} there are very few results
even concerning existence and uniqueness.
The first objective of the paper is precisely to investigate
uniqueness for \eqref{EDPTerm0}.
The second objective is to propose a probabilistic representation of
PDE \eqref{EDPTerm0}. Our approach relies  on the
existence and uniqueness for that PDE. 
Although it is beyond the scope of this paper, it is important to emphasize
the interests of probabilistic representation in possibly bringing new insights in stability analysis or numerical approximation of PDE \eqref{EDPTerm0}. 
For instance, based on probabilistic representation of nonlinear PDEs \cite{bossytalay1, JourMeleard} have  developed stochastic
 particle methods in the spirit of McKean to provide original Monte Carlo approximation schemes approaching several class of PDEs.
For recent contributions in that direction, one can refer to
\cite{LOR1,LOR2,LOR3,LOR4} and the survey paper 
\cite{papierUgolini}.
In the same spirit, one may develop Monte Carlo approximation schemes for PDE \eqref{EDPTerm0} based on the probabilistic representation provided in the present paper, which  will be the object of future works. 
Besides, the probabilistic representation of PDE \eqref{EDPTerm0} has already been exploited in \cite{IOR_Controle}, in the specific setting of Gaussian diffusions to propose an original approximation scheme for solving semi-linear PDEs with applications to stochastic control.  
 
%------------------------------------------------------------

To realize the probabilistic representation of the PDE 
\eqref{EDPTerm0}, when $\mu$ is non-negative, 
we consider the renormalized PDE
\begin{equation} \label{EDPTerm}
\left \{
\begin{array}{lll}
\partial_t {\bf \bar u} &=& \frac{1}{2} 
\displaystyle{\sum_{i,j=1}^d} \partial_{ij}^2 \left( (\sigma \sigma^{\top})_{i,j}(t,x) {\bf \bar u} \right) - div \left( b(t,x) {\bf \bar u} \right)\\
% && \ , \quad \textrm{for}\  t\in ]0,T[\ ,\\
{\bf \bar u}(T) &=& \bar \mu,
\end{array}
\right.
\end{equation}
where $\bar \mu = \frac{\mu}{\mu(\R^d)}$ is a probability measure.
  We remark that the PDEs \eqref{EDPTerm} and \eqref{EDPTerm0} are
  equivalent in the sense that a solution to \eqref{EDPTerm} (resp.
   \eqref{EDPTerm0}) provides a solution to the other one. 
  The program consists in  considering
the  McKean
type stochastic differential equation (SDE)
\begin{equation}\label{MKIntro}
%\forall t \in [0,T],
\begin{cases}
\displaystyle Y_t = Y_0 - \int^{t}_{0}b\left(T-r,Y_r\right)dr +
 \int^{t}_{0}\left\{\frac{\mathop{div_y}\left(\Sigma_{i.}\left(T-r,Y_r\right)p_{r} \left(Y_r\right)\right)}{p_{r}\left(Y_r\right)}\right\}_{i\in[\![1,d]\!]}dr + \int^{t}_{0} \sigma\left(T-r,Y_r\right)d\beta_r, \\
p_t \ \rm{density \ of} \ {\bf p}_t =  \rm{law\ of} \ Y_t, t \in ]0,T[,
\\
Y_0 \sim
%{\bf p_T} =
\bar \mu,
\end{cases}
\end{equation}
 where $\beta$ is a $d$-dimensional Brownian motion and $\Sigma = \sigma \sigma^\top$, whose solution is the couple $(Y,{\bf p})$. 
Indeed an application of It\^o formula (see Proposition \ref{PProbRep})
 shows that whenever 
$(Y,{\bf p})$ is a solution of the SDE \eqref{MKIntro} then 
% v(t, \cdot) := p_{T-t}, t \in ]0,T[$
$t \mapsto {\bf p}_{T-t}$
is a solution of \eqref{EDPTerm}. 
%      METTRE UN RESULTAT DANS LE PAPIER DANS LA SECTION \ref{Prelim}.
%L'EXTRAIRE DE LA PROPOSITION \ref{MKProp}.
%-------------------Nadia 11/08/2021
%An important feature of providing a probabilistic representation via a McKean SDE is that it can be potentially approached by an intereacting particle system and it can be a perspective for future works.

The idea of considering \eqref{MKIntro} comes from the
SDE verified 
by time-reversal of a diffusion.
Time-reversal of Markov processes was explored by several authors:
see for instance
 \cite{haussmann_pardoux} for the diffusion case in finite dimension,
\cite{wakolbinger}
 for the diffusion case in infinite dimension  
 and \cite{jacod_levy} for the jump case.
 We also mention the two very interesting recent preprints \cite{cattiaux2021time, conforti2021time} in relation with entropy.

%Let $T > 0$.
Consider a \textit{forward} diffusion process $X$ solution of
\begin{equation}
\label{eq:X} 
X_t=X_0+\int_0^t b(s,X_s)ds+\int^{t}_{0}\sigma(s,X_s)dW_s, \ t \in [0,T],
\end{equation}
where $\sigma$ and $b$ are Lipschitz coefficients  with linear growth
and $W$ is a standard Brownian motion on $\R^d$.
$X$ is a probabilistic representation of 
\begin{equation} \label{EDPInitial}
\left \{
\begin{array}{lll}
\partial_t {\bf u} &=& \frac{1}{2} 
\displaystyle{\sum_{i,j=1}^d} \partial_{ij}^2 \left( (\sigma \sigma^{\top})_{i,j}(t,x) {\bf  u} \right) - div \left( b(t,x) {\bf  u} \right)\\
% && \ , \quad \textrm{for}\  t\in ]0,T[\ ,\\
{\bf  u}(0) &=&  \nu,
\end{array}
\right.
\end{equation}
where $X_0 \sim \nu$. Indeed, whenever $X$ is
a solution of the SDE \eqref{eq:X} then the function $t \mapsto  {\bf u}(t)$,
where $ {\bf u}(t)$ is the law of $X_t$ is a solution in the sense of
distributions of the PDE \eqref{EDPInitial}.
We remark also that $t \mapsto  {\bf u}(t)$ solves 
the PDE \eqref{EDPTerm0}, $\mu$ being the law of $X_T$. 
Let us now denote $\hat X_t:= X_{T-t}, t \in [0,T]$  the time-reversal process
of the solution $X$ of  \eqref{eq:X}.
%------------Nadia 11/08/2021
%Time-reversal of Markov processes has been explored by several authors, see for instance \cite{haussmann_pardoux} for the diffusion case in finite dimension, \cite{wakolbinger} for the diffusion case in infinite dimension and \cite{jacod_levy} for the jump case. We also mention the two very interesting recent preprints \cite{cattiaux2021time, conforti2021time} in relation with entropy.
In \cite{haussmann_pardoux}
the authors gave sufficient general conditions on $\sigma, b$ and on the marginal laws  $p_t$ of $X_t$
% of $X$
so that $Y:= \hat X$
is a  solution (in law) of the SDE
%In that case, $\hat X$ verifies
\begin{equation}\label{IntroPardoux}
%\forall t \in [0,T],
%\begin{cases}
\displaystyle Y_t = X_T - \int^{t}_{0}b\left(T-r,Y_r\right)dr +
\int^{t}_{0}\left\{\frac{\mathop{div_y}\left(\Sigma_{i.}\left(T-r,Y_r\right)p_{T-r} \left(Y_r\right)\right)}{p_{T-r}\left(Y_r\right)}\right\}_{i\in[\![1,d]\!]}dr + \int^{t}_{0} \sigma\left(T-r,Y_r\right)d\beta_r.
%\\
% p_t \ \rm{density\ law\ of} \ X_t, \ t \in  [0,T],
%\end{cases}
\end{equation}
%This essentially proves existence for the McKean SDE
This constitutes an essential tool that we will exploit to prove existence of the McKean SDE  
%The key idea to show well-posedness of the McKean SDE
\eqref{MKIntro}.

As far as uniqueness for \eqref{MKIntro} is concerned,
we repeat that the key idea relies on uniqueness for the PDE  \eqref{EDPTerm}
(or \eqref{EDPTerm0}).
%The second objective is to propose a probabilistic representation of
%\eqref{EDPTerm0}. Indeed \eqref{MKIntro} can be considered
%a probabilistic representation of \eqref{EDPTerm0} taking into
%account
First of all Proposition
\ref{PProbRep},
%whose proof is based on the application of It\^o's formula,
states the following.
If $\left(Y,{\bf p}\right)$
is a solution of \eqref{MKIntro}, then ${\bf p}\left(T-\cdot\right)$ is a solution
of the PDE \eqref{EDPTerm0}, with $\mu = {\bf p}(0)$.
%Our approach relies  on the
%existence and uniqueness for that PDE. 
This fact justifies the terminology that \eqref{MKIntro}
constitutes a probabilistic representation of
\eqref{EDPTerm0}.
Now, if the PDE  \eqref{EDPTerm0} admits at most one solution then
${\bf p}$ is completely identified, so  \eqref{MKIntro}
reduces to an ordinary SDE for which uniqueness in law (resp. pathwise)
can be established whenever the coefficients are shown to
be locally bounded (resp. locally Lipschitz).

As  we have mentioned earlier, there are
not  many articles analyzing uniqueness for Fokker-Planck PDEs with terminal condition.
For introductory purposes, we present two simple situations
when this problem can be easily tackled: one by analytical
means and one by probabilistic techniques.
\begin{description}

\item{a)}  The heat equation with
 terminal condition admits uniqueness.
Suppose indeed that $u : [0,T] \mapsto \mathcal{S}'\left(\R^d\right)$
% supposed to be at least weakly continuous,
solves
\begin{equation} \label{HeatPDE}
\begin{cases}
\partial_t{\bf u} = \Delta {\bf u} \\
        {\bf u}\left(T\right) =  \mu.
\end{cases}
\end{equation}
 Then, the Fourier transform of $u$, $v\left(t,\cdot\right) :=
\mathcal{F}{\bf u}\left(t,\cdot\right), t \in [0,T]$ solves the  ODE
(for fixed $\xi \in \R^d$)
\begin{equation} \label{HeatODE}
\begin{cases}
  \frac{d}{dt}v\left(t,\xi\right) =
  - \left|\xi\right|^2v\left(t,\xi\right), \left(t,\xi\right) \in [0,T]\times\R^d\\
v\left(T,\cdot\right) = \mathcal{F}\mu.
\end{cases}
\end{equation}
 This admits at most one solution, since setting $\mathcal{F}\mu = 0$ the unique solution of \eqref{HeatODE} is the null function. 
\item{b)} Another relatively simple situation is described below
to study uniqueness among the solutions of
the PDE \eqref{EDPTerm0} whose initial value belongs to the class of Dirac measures.
Consider the example when $\sigma$ is continuous bounded non-degenerate 
and the drift $b$ is affine i.e. $b(s,y) = b_0\left(s\right) + b_1\left(s\right) y, \left(s,y\right) \in [0,T]\times\R^d$, $b_0$ (resp. $\ b_1$) being mappings from $[0,T]$ to $\R^d$ (resp. to ${M}_d\left(\R\right)$).
 Suppose for a moment that the PDE in the first line of \eqref{EDPTerm0}, but with initial condition
(see \eqref{Fokker}) is well-posed.
Sufficient conditions for this
will be provided in Remark \ref{R1}.
%%% NADIA. Le probleme est que cette remarque ne traite pas ce cas là.

Let  $x  \in \R^d$ and $u$ be a solution of the PDE  \eqref{EDPTerm0}
such that $u(0,\cdot) = \delta_{x}$. 
If $X^{x}$ is
the solution of \eqref{eq:X} with initial condition  $x$,
it is well-known that the family of laws of
$X^{x}_t, t \in [0,T]$, is a solution 
of \eqref{EDPTerm0}.
So this coincides with $u(t,\cdot)$
and in particular $ \mu$ is the law of $X^{x}_T$. 
To conclude we only need to determine $x$.
Taking the expectation in the SDE fulfilled by $X^{x}$, we show that
the function $t \mapsto E^x(t) := \E(X^{x}_t)$ is solution
of 
$$ E^x(t) = \int_{\R^d} y \mu\left(dy\right) - \int_t ^T \left(b_0(s) + b_1(s) E^x(s)\right) ds.$$ 
Previous linear ODE has clearly a unique solution. At this point
$x = E(0)$ is uniquely determined.
\end{description}
Those examples give a flavor of how to tackle the uniqueness
issue for the PDE \eqref{EDPTerm0}. However, generalizing those approaches is far more complicated 
and constitutes the first part  of the present work.
The contributions of the paper are twofold.
\begin{enumerate}
\item We investigate uniqueness for the Fokker-Planck PDE with
terminal condition \eqref{EDPTerm0}. This
is  done in Section \ref{S3} 
in two different situations: the case when
the coefficients are bounded and the situation
of a PDE associated with an inhomogeneous
Ornstein-Uhlenbeck (OU) semigroup.
In  Section \ref{S32}
we show uniqueness for bounded continuous coefficients
when solutions start in the class $\shc$
of multiples of Dirac measures.
In Proposition \ref{propLip1} we discuss 
dimension $d = 1$.
Theorem \ref{propLipd} is devoted to the case $d \ge 2$.
We distinguish the non-degenerate case from the possibly
degenerate case but with smooth coefficients proving
uniqueness for small time horizon $T$.
In Section \ref{SGP} we show uniqueness
when the coefficients are
stepwise time-homogeneous.
%for  solutions starting in the class $\shc$
%of all
%finite possibly signed Borel measures on $\R^d$.
In Theorem \ref{P315} the coefficients are time-homogeneous,
bounded and H\"older, with non-degenerate diffusion.
Corollary \ref{C313} extends previous results to
the case of stepwise time-inhomogeneous coefficients.
In Section \ref{S34}, Theorem \ref{BwdOU_Uniq} treats
the Ornstein-Uhlenbeck case.
%for solutions starting in the class of all probability measure in $\R^d$.
\item We study existence and uniqueness in law
  for the McKean SDE \eqref{MKIntro}, with some specific
  remarks concerning strong existence and pathwise uniqueness.
After some preliminary considerations in Section \ref{Prelim}, 
  Proposition  \ref{TExUniq} and Theorem
\ref{TC313} 
discuss the case of bounded coefficients.
Theorem \ref{MKOU_WellP}  is devoted to the case of Ornstein-Uhlenbeck
  (with not necessarily Gaussian terminal condition),
  where strong existence and pathwise uniqueness are established.
% Section \ref{SExamples44} will develop a significant class of examples. 
\end{enumerate}
% About numerical implementations, we mention that
% \cite{IOR_Controle} 
%  gives a tool for approximating
%  solutions of some semilinear PDEs with applications
% to stochastic control, making use of the solution of a Gaussian backward
% diffusion problem.  

% to numerical simulations of a semilinear PDEs with applications
% to stochastic control, making use of the solution of a Gaussian backward
% diffusion problem.

\section{Notations and preliminaries}
\label{SNotations}

\setcounter{equation}{0}

 Let us fix $d \in \mathbb{N}^*$, $T > 0$.
   $\mathcal{C}^{\infty}_c\left(\R^d\right)$ is the linear space of smooth
 functions with compact support.
  For a given $p \in \N^*$, $[\![1,p]\!]$ denotes the set of all integers between $1$ and $p$ included. 
 $M_d\left(\R\right)$ stands for the set of $d\times d$ matrices. 
%If $d = m$, we simply use the notation $M_{d}\left(\R\right)$.
 $\left<,\right>$  denotes the usual scalar product on $\R^d$,
 with associated norm $\left|.\right|$.
 For a given $A \in M_d\left(\R\right)$, $Tr\left(A\right)$ (resp. $A^{\top}$) symbolizes the trace (resp. the transpose) of the matrix $A$. $\left|\left|A\right|\right|$ denotes the usual Frobenius norm. 
% \begin{comment}
%  For a given $f: \R^d \rightarrow \R^d$, $p,l \in \mathbb{N}^*$,
% $
% \partial_{j}f^i, \left(i,j\right)\in [\![1,p]\!]\times[\![1,l]\!]
% $
%   denote the partial derivatives of $f$ being defined 
% in the sense of distributions on $\R^p$ whenever they exist.
% \end{comment}
 We also  introduce the function $Jf$ from $\R^p$ to $M_{d}\left(\R\right)$ such that
$ Jf : z \mapsto \left(\partial_{j}f^i\left(z\right)\right)_{\left(i,j\right)\in
 [\![1,d]\!]\times[\![1,d]\!]}.
$

Let $\alpha \in ]0,1[, n \in \N$.
$\shc_b(\R^d)$ (resp. $\shc^n_b(\R^d)$) indicates the space
of bounded continuous functions 
(resp. bounded functions of class $\shc^n$ such that
all the derivatives are bounded).
%$k \in \mathbb{N}$,
$ \mathcal{C}^{\alpha}(\R^d), 0 < \alpha <1,$ is the Banach space of 
bounded $\alpha$-H\"older functions $\R^d \rightarrow \R$
equipped with the norm
$
\left|.\right|_{\alpha} :=  \left|\left|.\right|\right|_{\infty} + 
\left[.\right]_{\alpha}, 
$
where  
$$ \left[f\right]_{\alpha} := 
\sup_{x,y \in \R^d, x \neq y}
\frac{\left|f(x) - f(y)\right|}{\left|x-y\right|^{\alpha}} < 
\infty$$
and $\Vert \cdot \Vert_\infty$ is the sup-norm.
If  $n$ is some integer
$ \mathcal{C}^{\alpha+n}(\R^d)$ is the Banach space of bounded functions 
 $f: \R^d \rightarrow \R$  such that all its derivatives up to order
$n$ are bounded
and such that the derivatives of order $n$ are $\alpha$-H\"older continuous.
This is equipped with the norm obtained as the sum
of the $C^n_b(\R^d)$-norm plus the sum of the quantities $[g]_\alpha$ 
where $g$ is an $n$-order derivative of $f$. 
For more details, see Section 0.2 of \cite{lunardi_1995}.
If $E$ is a linear Banach space,
we denote by $\left|\left|.\right|\right|_{E}$ the associated operator norm and by $\mathcal{L}\left(E\right)$ the space of linear bounded operators 
$E \rightarrow E$. Often in the sequel we will have
$E = \mathcal{C}^{2\alpha}(\R^d)$.

$\shp\left(\mathbb{R}^d\right)$ (resp. $\shm_+\left(\mathbb{R}^d\right),
\shm_{f}\left(\mathbb{R}^d\right)$)
 denotes the set of probability 
(resp. non-negative finite valued, finite signed) measures
on $\left(\mathbb{R}^d,\mathcal{B}\left(\mathbb{R}^d\right)\right)$.
 We also denote by $\shs\left(\R^d\right)$ the space of Schwartz functions and by $\shs'\left(\R^d\right)$ the space of tempered distributions. For all $\phi \in \shs\left(\R^d\right)$ and $\mu \in \shm_f\left(\R^d\right)$, we set the notations
\be
\shf \phi : \xi \mapsto \int_{\R^d}e^{-i\left<\xi,x\right>}\phi\left(x\right)dx, \ \shf \mu : \xi \mapsto \int_{\R^d}e^{-i\left<\xi,x\right>}\mu\left(dx\right).
%, \ \shf T : \phi \mapsto T\left(\widehat{\phi}\right).
\ee
\smallbreak
\noindent Given a mapping ${\bf u}: [0,T] \rightarrow \mathcal{M}_f\left(\mathbb{R}^d\right)$, we convene that when for $t \in [0,T]$,  ${\bf u}\left(t\right)$ has a density, this is denoted by $u\left(t,\cdot\right)$. 
 Recalling $\Sigma = \sigma \sigma^\top$, let us introduce, for a given $t$ in $[0,T]$, the differential operator, 
\begin{equation} \label{EqOpL}
L_t f := \frac{1}{2}\sum^{d}_{i,j=1}\Sigma_{ij}(t,\cdot) \partial_{ij} f +
  \sum^{d}_{i=1 }b_i\left(t,\cdot\right)\partial_{i} f,
%L_t f:= \left<b\left(t,\cdot\right),\nabla_x f \right  > +
% \frac{1}{2}Tr\left(\Sigma\left(t,\cdot\right)\nabla^2_x f \right),
\end{equation}
$f \in C^2(\R^d)$ 
 and denote by $L^*_t$ its formal adjoint, which means
that for a given signed measure $\eta$
\begin{equation} \label{EqOpL*}
L^*_t \eta := \frac{1}{2} 
\displaystyle{\sum_{i,j=1}^d} \partial_{ij}^2 \left( \Sigma_{i,j}(t,x) \eta
\right) - div \left( b(t,x) \eta \right )\ .
\end{equation}
With this notation, the PDE \eqref{EDPTerm0} rewrites
\begin{equation} \label{BackwardFokker}
\begin{cases}
\partial_t{\bf u} = L^*_t{\bf u} \\
{\bf u}\left(T\right) = \mu. \\
\end{cases}
\end{equation}

 In the sequel we will often make use of the following assumptions.
\begin{ass}\label{Lip1d}
$b,\sigma$ are Lipschitz in space uniformly in time, with linear growth.
\end{ass} 
\begin{ass} \label{Zvon1a}
$b$ and $\Sigma$ are bounded.
% and $\Sigma$ is continuous. 
\end{ass}
\begin{ass} \label{Zvon1b}
%$b$ and $\Sigma$ are bounded.
% and
 $\Sigma$ is continuous. 
\end{ass}
\begin{ass} \label{Zvon3}
There exists $\epsilon > 0$ such that for all $t \in [0,T]$, $\xi \in \R^d$, $x \in \R^d$
\begin{equation}
\left<\Sigma(t,x)\xi,\xi\right> \geq \epsilon \left|\xi\right|^2.
\end{equation}
\end{ass}
 For a given random variable $X$ on a probability space $\left(\Omega,\shf,\P\right)$, $\mathcal{L}_{\P}\left(X\right)$
 denotes its law under $\P$ and
$\mathbb{E}_{\P}\left(X\right)$ its expectation under $\P$. When self-explanatory, the subscript will be omitted in the sequel.

\section{A Fokker-Planck PDE with terminal condition}
\label{S3}

\setcounter{equation}{0}

\subsection{Preliminary results on uniqueness}

 In this section, we consider a Fokker-Planck type PDE with terminal condition for which the notion of solution is clarified in the following definition.
\begin{defi} \label{Def}
\noindent Fix $\mu \in \shm_f\left(\R^d\right)$. We say that a mapping ${\bf u}$ from $[0,T]$ to
 $\shm_f\left(\R^d\right)$ solves the PDE~\eqref{EDPTerm0}, 
 if for all $\phi \in \mathcal{C}^{\infty}_c\left(\R^d\right)$ and all $t \in [0,T]$
\begin{equation} \label{weak}
\int_{\mathbb{R}^d}\phi\left(y\right){\bf u}\left(t\right)\left(dy\right) = 
\int_{\mathbb{R}^d}\phi\left(y\right)\mu\left(dy\right) - \int^{T}_{t}\int_{\mathbb{R}^d}L_s\phi\left(y\right){\bf u}\left(s\right)\left(dy\right)ds.
\end{equation}
\end{defi}
We consider the following property related to a
given class $\mathcal{C} \subseteq \shm_+\left(\R^d\right)$.
%%% LUCAS
Later we will establish uniqueness results for \eqref{EDPTerm0}
provided that the solution starts in  $\shc$.

\begin{property} \label{GH1}
For all $\nu \in \mathcal{C}$, the PDE
\begin{equation}\label{Fokker}
\begin{cases}
\partial_t{\bf u} = L^*_t{\bf u} \\
{\bf u}\left(0\right) = \nu 
\end{cases}
\end{equation}
 admits at most one solution ${\bf u}: [0,T] \rightarrow \shm_+\left(\mathbb{R}^d\right)$.
\end{property}
 We recall that, for a given $\nu \in \shm_f\left(\R^d\right)$, ${\bf u}: [0,T] \rightarrow \shm_f\left(\mathbb{R}^d\right)$
is a solution of the PDE \eqref{Fokker} if for all $\phi \in \mathcal{C}^{\infty}_c\left(\R^d\right)$ and all $t \in [0,T]$,
\begin{equation} \label{weakbis}
\int_{\mathbb{R}^d}\phi\left(y\right){\bf u}\left(t\right)\left(dy\right) = \int_{\mathbb{R}^d}\phi\left(y\right)\nu\left(dy\right) + \int^{t}_{0}\int_{\mathbb{R}^d}L_s\phi\left(y\right){\bf u}\left(s\right)\left(dy\right)ds.
\end{equation}
 Suppose there is an $\shm_+\left(\R^d\right)$-valued  
 solution  ${\bf u}$  of \eqref{Fokker}
such that
${\bf u}(0) \in \shc$  for
  some class $\shc$. We also suppose
 that Property \ref{GH1} holds with respect to $\shc$.
Then this unique solution will be denoted by ${\bf u}^{\nu}$ in the sequel.
 We remark that, whenever Property \ref{GH1} holds with respect to a given $\shc \subseteq \shp\left(\R^d\right)$, then the PDE \eqref{Fokker} admits at most one $\shm_+\left(\R^d\right)$-valued solution with any initial value belonging to $\R^*_+\shc := \left(\alpha\nu\right)_{\alpha > 0,\nu \in \shc}$.
\smallbreak

We start with a simple but fundamental observation.
\begin{prop} \label{PFundam}
 Let us suppose $\sigma, b$ to be locally bounded, $\nu$
 be a Borel probability on $\R^d$,
$\alpha >  0$,
 $\xi$ be a r.v.
distributed according to $\nu$.
 Suppose that there is a solution $X$ of SDE
\begin{equation} \label{EqLin}
X_t = \xi +  \int^{t}_{0}b\left(r,X_r\right)dr + \int^{t}_{0}\sigma\left(r,X_r\right)dW_r, \  t \in [0,T], \ \mathbb{P}\rm{-a.s.},
\end{equation}
 where $W$ is a $d$-dimensional standard Brownian motion.
  Then the $\shm_+\left(\R^d\right)$-valued function
$t \mapsto \alpha\mathcal{L}\left(X_t\right)$ 
%${\bf u}^{x}$ such that for all $t\in[0,T]$, ${\bf u}^x(t,\cdot)$ is the law of $X^x_  t$ 
is a solution of the PDE \eqref{Fokker} with initial value $\alpha \nu$.     %$\nu = \alpha\delta_x$.
\end{prop}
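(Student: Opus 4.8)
The plan is to verify the weak formulation \eqref{weakbis} (with $\nu$ replaced by $\alpha\nu$) directly, by means of It\^o's formula. Set ${\bf u}(t) := \alpha \mathcal{L}(X_t)$. Since $X_0 = \xi \sim \nu$, one immediately has ${\bf u}(0) = \alpha\nu$, so it only remains to establish the integral identity for every $\phi \in \mathcal{C}^\infty_c(\R^d)$ and every $t \in [0,T]$.

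First I would fix such a $\phi$ and apply It\^o's formula to $\phi(X_t)$, using the semimartingale decomposition provided by \eqref{EqLin} together with the quadratic covariation $d\langle X^i, X^j\rangle_r = \Sigma_{ij}(r,X_r)\,dr$, where $\Sigma = \sigma\sigma^\top$. Collecting the first-order drift contribution and the second-order bracket contribution reproduces exactly the operator $L_r$ defined in \eqref{EqOpL}, so that
$$\phi(X_t) = \phi(\xi) + \int_0^t L_r\phi(X_r)\,dr + \int_0^t \left\langle \nabla\phi(X_r), \sigma(r,X_r)\,dW_r\right\rangle.$$

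Next I would take expectations. The key point, and the only place where some care is needed, is to argue that the stochastic integral is a genuine martingale with zero expectation, despite $\sigma$ being merely \emph{locally} bounded. This follows from the compact support of $\phi$: if $\mathrm{supp}\,\phi \subseteq \bar B_R$, the integrand $\langle\nabla\phi(X_r),\sigma(r,X_r)\,\cdot\,\rangle$ vanishes whenever $X_r \notin \bar B_R$ and is otherwise bounded by $\|\nabla\phi\|_\infty$ times the supremum of $|\sigma|$ over the compact set $[0,T]\times\bar B_R$, which is finite by local boundedness. Hence the integrand is bounded, the stochastic integral is a square-integrable martingale, and its expectation vanishes. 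The same localization shows that $r\mapsto L_r\phi(X_r)$ is bounded (the derivatives $\partial_i\phi,\partial_{ij}\phi$ are compactly supported, while $b$ and $\Sigma$ are bounded on $[0,T]\times\bar B_R$), which legitimizes the use of Fubini's theorem to interchange the time integral and the expectation.

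We then obtain
$$\E[\phi(X_t)] = \E[\phi(\xi)] + \int_0^t \E[L_r\phi(X_r)]\,dr.$$
Rewriting each expectation as an integral against $\mathcal{L}(X_\cdot)$ and multiplying through by $\alpha$ produces precisely \eqref{weakbis} with initial datum $\alpha\nu$, which shows that ${\bf u}$ solves \eqref{Fokker}. I expect no genuine obstacle beyond the localization argument ensuring the vanishing of the martingale term; the remaining steps amount to the routine translation between the probabilistic expectations and the measure-theoretic pairings appearing in \eqref{weakbis}.
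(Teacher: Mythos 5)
Your proof is correct and follows exactly the route the paper takes: the paper's own proof consists of the single sentence ``apply It\^o's formula to $\varphi(X_t)$ for $\varphi$ smooth with compact support, then take the expectation,'' and your write-up simply fills in the details, with the localization argument (compact support of $\phi$ plus local boundedness of $\sigma$ and $b$) correctly justifying that the stochastic integral is a true martingale and that Fubini applies.
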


\begin{proof} \
One first applies It\^o formula to $\varphi(X_t)$,
where $\varphi$ is a smooth function with compact support
and then one takes the expectation.
\end{proof}
\begin{rem} \label{R1}
\begin{enumerate}
\item Suppose that the coefficients  $b,\Sigma$ are bounded.
 Property \ref{GH1} holds with respect to $\mathcal{C} :=
   \shm_+\left(\R^d\right)$
%\shm_+(\R^d)$
  as soon as
  the martingale problem associated with $b,\Sigma$ 
  admits uniqueness for all initial condition of the type $\delta_x, x \in \R^d$.
  Indeed, this is a consequence of Lemma 2.3 in \cite{figalli}.
  %implies even
  %uniqueness for \eqref{Fokker} in the class of non-negative
  %measure-valued solutions.
\item Suppose $b$ and $\sigma$ with linear growth. Let
  $\nu \in \shm_+\left(\R^d\right)$ not trivially null  (resp. 
$\nu \in \shp\left(\R^d\right)$).
  By Proposition \ref{PFundam}, the existence of an $\shm_+\left(\R^d\right)$-valued (resp. $\shp\left(\R^d\right)$-valued) solution
  for the PDE \eqref{Fokker} (even on $t \ge 0$)
  is  ensured when the martingale problem associated to
  $b$ and $\Sigma$ admits existence (and consequently when the SDE 
\eqref{EqLin} admits weak existence) with initial condition
 $\frac{\nu}{\Vert\nu\Vert}$.
% This follows by Proposition \ref{PFundam}.
We remark that, for example, this happens when the coefficients $b, \sigma$
are continuous with linear growth: see Theorem 12.2.3 in \cite{stroock} 
for the case of 
bounded coefficients, the unbounded case can be easily obtained by truncation.
%%%  NADIA. IL Y A UNE REF.DIRECTE DANS STROOCK VARADHAN
%%%au cas à croissance linéaire.
\item The martingale problem associated to $b$ and $\Sigma$ 
  is well-posed for all deterministic
  initial condition, for instance in the following cases.
\begin{itemize}
\item When $\Sigma, b$ have linear growth 
  and $\Sigma$   is continuous and non-degenerate
  (i.e. Assumptions \ref{Zvon1a} and \ref{Zvon3} hold),
see \cite{stroock} Corollary 7.1.7 and Theorem 10.2.2.
\item Suppose $d=1$ and $\sigma $ is bounded.
When $\sigma$ is lower bounded by a positive constant
  on each compact set, see \cite{stroock}, Exercise 7.3.3.
\item When $d =2$, $\Sigma$ is non-degenerate and
$\sigma$  and $b$ are time-homogeneous and bounded,
see \cite{stroock}, Exercise 7.3.4.
\item When $\sigma, b$ are Lipschitz with linear
growth  (with respect to the
space variable); in this case one obtains even strong solutions
of the corresponding stochastic differential equation.
\end{itemize}
\end{enumerate}
%%% Nous l'avons deplace a l'interieur du lemme/
The lemma below provides in particular sufficient conditions for
 the validity of Property \ref{GH1}.
%related to the class $\shc = \shp(\R^d)$ under 
%Assumptions \ref{Zvon1} and \ref{Zvon3}.

\begin{lem} \label{LC313}
\begin{enumerate} 
\item  Let  $\nu \in \shp\left(\R^d\right)$.
We suppose
 Assumptions \ref{Zvon1a}, \ref{Zvon1b}
 and \ref{Zvon3}.
Then there  is a unique $\shm_+\left(\R^d\right)$-valued 
 solution
 % ${\bf u}^\nu$
${\bf u}$
 to the PDE \eqref{Fokker}
with  ${\bf u}(0) = \nu$.
Moreover ${\bf u}^\nu$ takes values in $\shp(\R^d)$.
In particular Property \ref{GH1}
related to the class $\shc = \shp(\R^d)$ is verified.
%\item Assumption \ref{Lip1d} and $\sigma, b$ are bounded. 
\item Under Assumptions \ref{Lip1d} and \ref{Zvon1a},
 Property \ref{GH1} is fulfilled for $\shc = \shm_+\left(\R^d\right)$.
\end{enumerate}

\end{lem}
\begin{proof} \
\begin{enumerate}
\item  Existence follows by items 2. and 3. of Remark \ref{R1}.
  Uniqueness is a consequence of items 1. and 3. of
  the same Remark.
\item  Since $b$ and $\sigma$ are Lipschitz,
 Property \ref{GH1} is fulfilled, see items 1. and 3. of
 Remark \ref{R1}.
 %we recall that, since $b, \sigma$
 % are Lipschitz then Property \ref{GH1} is fulfilled. 
\end{enumerate}
 \end{proof}

\end{rem}

\noindent In  Propositions \ref{P1} and \ref{P2} 
below we give two equivalent formulations for uniqueness of PDE 
 \eqref{EDPTerm0}. %starting in some subclass of $\shp\left(\R^d\right)$.
% whose solutions 
%start in the class $\mathcal{C}$. 
\begin{prop} \label{P1} 
 Suppose Property \ref{GH1} holds with respect to a given
 $\shc \subseteq \shm_+(\R^d)$.
  Suppose that for all $\nu \in \shc$
  there exists an $\shm_+(\R^d)$-valued solution of \eqref{Fokker}
  with initial value $\nu$.
 Then, the following properties are equivalent.
\begin{enumerate}
\item The mapping from $\mathcal{C}$ to  $\shm_+(\R^d)$
$\nu \mapsto {\bf u}^{\nu}(T)$
is injective.
\item For all $\mu \in  \shm_+(\R^d)$,
  the PDE \eqref{EDPTerm0}
%%OLD\eqref{EDPTerm0}
 with terminal value $\mu$ admits at most a solution in the sense of Definition \ref{Def}, among all $\shm_+\left(\R^d\right)$-valued solutions starting in the class $\mathcal{C}$. 
\end{enumerate}
\end{prop}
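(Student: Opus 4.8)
The plan is to exploit that the forward equation \eqref{Fokker} and the terminal-value equation \eqref{BackwardFokker} share \emph{exactly the same} weak evolution; they differ only in whether the data are anchored at time $0$ or at time $T$. Accordingly, the central step is to set up a dictionary between the two notions of solution: a map ${\bf u}:[0,T]\to\shm_+(\R^d)$ solves \eqref{BackwardFokker} with terminal value $\mu$ in the sense of Definition \ref{Def} if and only if it solves \eqref{Fokker} with initial value ${\bf u}(0)$ and, in addition, ${\bf u}(T)=\mu$. Once this dictionary is in place, both implications reduce to bookkeeping with the (well-defined, by Assumption \ref{GH1} together with the assumed existence) object ${\bf u}^{\nu}$.

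To prove the dictionary I would fix $\phi\in\mathcal{C}^\infty_c(\R^d)$ and introduce the scalar functions $F(t):=\int_{\R^d}\phi\,{\bf u}(t)(dy)$ and $G(s):=\int_{\R^d}L_s\phi\,{\bf u}(s)(dy)$. The weak backward formulation \eqref{weak} reads $F(t)=F(T)-\int_t^T G(s)\,ds$ with $F(T)=\int\phi\,d\mu$, whereas the weak forward formulation \eqref{weakbis} reads $F(t)=F(0)+\int_0^t G(s)\,ds$. Comparing the backward relation at a general $t$ with the same relation evaluated at $t=0$ shows that each of the two formulations is equivalent to the single identity $F(t)-F(0)=\int_0^t G(s)\,ds$ for all $t$; moreover taking $t=T$ in \eqref{weak} recovers ${\bf u}(T)=\mu$. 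Since $\phi$ is arbitrary, this yields precisely the claimed equivalence of the two notions of solution, with ${\bf u}(0)$ playing the role of the initial datum $\nu$.

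With the dictionary, the implication $(1)\Rightarrow(2)$ goes as follows. Fix $\mu\in\shm_+(\R^d)$ and let ${\bf u}^{1},{\bf u}^{2}$ be two $\shm_+(\R^d)$-valued solutions of \eqref{BackwardFokker} with terminal value $\mu$, both starting in $\shc$, say at $\nu_1:={\bf u}^{1}(0)$ and $\nu_2:={\bf u}^{2}(0)$. By the dictionary each ${\bf u}^{i}$ is a forward solution of \eqref{Fokker} with initial datum $\nu_i\in\shc$, so by uniqueness (Assumption \ref{GH1}) one has ${\bf u}^{i}={\bf u}^{\nu_i}$; in particular ${\bf u}^{\nu_1}(T)=\mu={\bf u}^{\nu_2}(T)$. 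Injectivity of $\nu\mapsto{\bf u}^{\nu}(T)$ then forces $\nu_1=\nu_2$, whence ${\bf u}^{1}={\bf u}^{\nu_1}={\bf u}^{\nu_2}={\bf u}^{2}$. For $(2)\Rightarrow(1)$, take $\nu_1,\nu_2\in\shc$ with ${\bf u}^{\nu_1}(T)={\bf u}^{\nu_2}(T)=:\mu$; the dictionary turns ${\bf u}^{\nu_1}$ and ${\bf u}^{\nu_2}$ into $\shm_+(\R^d)$-valued solutions of \eqref{BackwardFokker} with the same terminal value $\mu$, both starting in $\shc$. The uniqueness asserted in $(2)$ gives ${\bf u}^{\nu_1}={\bf u}^{\nu_2}$, and evaluating at $t=0$ yields $\nu_1={\bf u}^{\nu_1}(0)={\bf u}^{\nu_2}(0)=\nu_2$, i.e.\ injectivity.

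I do not expect a genuine obstacle here: the entire substance of the argument is the first step, namely the careful verification that the weak forward and backward formulations are equivalent once the initial/terminal anchor is matched. This is elementary but must be carried out at the level of the scalar functions $F$ and $G$ for each test function $\phi$, since Definition \ref{Def} and \eqref{weakbis} are only integrated (not differential) identities. Everything following the dictionary is a direct combination of the definition of ${\bf u}^{\nu}$ with the two hypotheses.
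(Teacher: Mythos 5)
Your proposal is correct and follows essentially the same route as the paper: both implications are reduced, via the observation that a solution of \eqref{BackwardFokker} in the sense of Definition \ref{Def} is a solution of \eqref{Fokker} with initial value ${\bf u}(0)$, to the injectivity of $\nu\mapsto{\bf u}^{\nu}(T)$ together with Assumption \ref{GH1}. The only difference is that you spell out the forward/backward ``dictionary'' at the level of the scalar functions $F$ and $G$, a step the paper treats as an immediate remark.
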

\begin{proof}
%\noindent Let $\shc$ as in the proposition statement.
\smallbreak
\noindent Concerning the converse implication,
suppose that uniqueness holds for equation 
\eqref{EDPTerm0}
%for all terminal values in $\shm_+\left(\R^d\right)$
in the sense
of Definition \ref{Def},
among non-negative measure-valued solutions starting in the class $\mathcal{C}$.
Consider $\nu, \nu'  \in \mathcal{C}$ such that 
${\bf u}^{\nu}(T) =  {\bf u}^{\nu'}(T).$
%and suppose that uniqueness holds for equation 
%\eqref{EDPTerm0}
%for all terminal values in $\shm_+\left(\R^d\right)$ in the sense
%of Definition \ref{Def}
%among non-negative measure-valued solutions starting in the class $\mathcal{C}$.
We remark that ${\bf u}^{\nu},{\bf u}^{\nu'}$ are such solutions
of PDE \eqref{EDPTerm0} with same 
terminal condition. Uniqueness gives ${\bf u}^{\nu} = {\bf u}^{\nu'}$ and in particular $\nu = \nu'$
and the injectivity stated in item 1. holds.
\smallbreak
 Concerning the direct implication, consider ${\bf u}^1,{\bf u}^2$
two non-negative measure-valued solutions of equation \eqref{EDPTerm0} in the sense of Definition \ref{Def}, with the same terminal value in $\shm_+\left(\R^d\right)$,
such that ${\bf u}^i\left(0\right), i \in \left\{1,2\right\},$ belong to $\mathcal{C}$ and suppose that $\nu \mapsto {\bf u}^{\nu}\left(T\right)$ is injective from $\mathcal{C}$ to $\shm_+\left(\R^d\right)$. Setting $\nu^i := {\bf u}^i\left(0\right)$, we remark that for a given $i \in \left\{1,2\right\}$ we have
\begin{equation} \label{FPBis}
\begin{cases}
\partial_t{\bf u}^i = L^*_t{\bf u}^i \\
{\bf u}^i\left(0\right) = \nu_i, \\
\end{cases}
\end{equation}
 in the sense of equation \eqref{weakbis}. 
Then, the fact ${\bf u}^1\left(T\right) = {\bf u}^2\left(T\right)$ gives 
$
{\bf u}^{\nu_1}\left(T\right) = {\bf u}^{\nu_2}\left(T\right).
$
By injectivity $\nu_1 = \nu_2$ and the statement 2. follows by Property \ref{GH1}.
\end{proof}
Proceeding in the same way as for the proof of Proposition \ref{P1},
for the case of signed measures, 
we obtain the following. 
\begin{prop}\label{P2}
  Suppose that for all $\nu \in \shm_f\left(\R^d\right)$,
  there exists a unique solution 
${\bf u}^\nu: [0,T] \rightarrow \shm_f\left(\mathbb{R}^d\right)$
  of the PDE \eqref{Fokker}
  with initial value $\nu$.
 Then, the following properties are equivalent.
%Let $\mathcal{C}$ be the class introduced in Assumption \ref{GH1}. The following properties are equivalent:
\begin{enumerate}
\item The function 
$\nu \mapsto {\bf u}^{\nu}(T)$
is injective.
\item For all $\mu \in  \shm_f(\R^d)$,
  the PDE \eqref{EDPTerm0} with terminal value $\mu$ admits at most a solution in the sense of Definition \ref{Def}.
\end{enumerate}
\end{prop}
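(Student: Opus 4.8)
The plan is to mimic the argument of Proposition \ref{P1}, the crucial simplification being that the standing hypothesis now guarantees, for \emph{every} $\nu \in \shm_f(\R^d)$, both existence and uniqueness of a forward solution ${\bf u}^\nu$ of \eqref{Fokker}. Consequently there is no longer any need to restrict to a subclass $\shc$ of initial data, nor to impose any sign constraint, and the map $\nu \mapsto {\bf u}^\nu(T)$ is well-defined on all of $\shm_f(\R^d)$.

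The first step I would carry out is the purely algebraic observation that a mapping ${\bf u}\colon [0,T] \to \shm_f(\R^d)$ solving the terminal-value problem \eqref{EDPTerm0} in the sense of Definition \ref{Def} is the \emph{same object} as a forward solution of \eqref{Fokker} with initial datum $\nu := {\bf u}(0)$. Indeed, writing the weak identity \eqref{weak} at a generic time $t$ and at $t=0$ and subtracting the two, the integral over $[t,T]$ cancels against the one over $[0,T]$ and leaves precisely the forward weak formulation \eqref{weakbis} with $\nu = {\bf u}(0)$; conversely, \eqref{weakbis} together with the prescription ${\bf u}(T) = \mu$ recovers \eqref{weak}. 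Hence any terminal-value solution with terminal datum $\mu$ coincides, by forward uniqueness, with ${\bf u}^\nu$ for its own initial value $\nu := {\bf u}(0)$, and it satisfies ${\bf u}^\nu(T) = \mu$.

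With this identification in hand, both implications are immediate. For $(1) \Rightarrow (2)$, I would take two solutions ${\bf u}^1, {\bf u}^2$ of \eqref{EDPTerm0} sharing the terminal value $\mu$, set $\nu_i := {\bf u}^i(0)$, and use forward uniqueness to write ${\bf u}^i = {\bf u}^{\nu_i}$. Then ${\bf u}^{\nu_1}(T) = \mu = {\bf u}^{\nu_2}(T)$, so injectivity of $\nu \mapsto {\bf u}^\nu(T)$ forces $\nu_1 = \nu_2$, whence ${\bf u}^1 = {\bf u}^2$ again by forward uniqueness. For the converse $(2) \Rightarrow (1)$, I would take $\nu, \nu' \in \shm_f(\R^d)$ with ${\bf u}^\nu(T) = {\bf u}^{\nu'}(T)$; by the observation above both ${\bf u}^\nu$ and ${\bf u}^{\nu'}$ are admissible terminal-value solutions for this common terminal datum, so the assumed terminal-value uniqueness yields ${\bf u}^\nu = {\bf u}^{\nu'}$, and in particular $\nu = {\bf u}^\nu(0) = {\bf u}^{\nu'}(0) = \nu'$.

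There is no genuine obstacle here: the argument is essentially formal once the equivalence of the two weak formulations is checked. The only point demanding a moment of care is verifying that this equivalence, and the resulting bijective correspondence between terminal-value and forward solutions, goes through verbatim on the larger space $\shm_f(\R^d)$ of signed finite measures rather than within the non-negative, class-restricted framework of Proposition \ref{P1}. Since the present hypotheses are global over $\shm_f(\R^d)$, the bookkeeping of classes disappears entirely, and all the substantive work has been pushed into the assumed well-posedness of the forward problem \eqref{Fokker}.
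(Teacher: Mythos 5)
Your proof is correct and follows essentially the same route as the paper, which simply states that Proposition \ref{P2} is obtained by repeating the argument of Proposition \ref{P1} (now with $\shc = \shm_f(\R^d)$ and no sign constraint, existence and uniqueness of the forward problem being assumed globally). The identification of terminal-value solutions with forward solutions via subtraction of the weak identities \eqref{weak} and \eqref{weakbis}, which you spell out, is exactly the observation the paper uses implicitly in the proof of Proposition \ref{P1}.
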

%\begin{proof}
%\noindent We proceed as in the proof of Proposition \ref{P1}.
%\end{proof}

\begin{rem} \label{RP1}
\begin{enumerate}
\item Suppose that the coefficients $\Sigma, b$ are bounded.
 Then, any measure-valued solution ${\bf u}:[0,T] \rightarrow \shm_+(\R^d)$
  of the PDE \eqref{Fokker} such that  ${\bf u}(0) \in \shp(\R^d)$ takes values in $\shp(\R^d)$.
  Indeed, this can be shown approaching the function $\varphi \equiv 1$
  from below by smooth functions with compact support.
\item Replacing $\shm_+(\R^d)$ with $\shp(\R^d)$ in
 Property \ref{GH1}, 
  item 2. in Proposition \ref{P1} can be stated also replacing
  $\shm_+(\R^d)$ with $\shp(\R^d)$.
\end{enumerate}
  \end{rem}

\subsection{Uniqueness: the case of Dirac initial conditions}

\label{S32}

\noindent In this section we  will make use of a probabilistic technique
% give examples of functions $b,\sigma$ 
for discussing uniqueness of the PDE \eqref{EDPTerm0} among $\shm_+(\R^d)$-valued solutions
 starting in 
%supposing Property \ref{GH1} is in force 
 $\mathcal{C} := \left(\alpha\delta_x\right)_{\alpha > 0,x \in \R^d}$.
We will make use of a probabilistic technique.
Given a solution ${\bf u}$ of \eqref{EDPTerm0}, we associate
a process $X$ 
% a r.v. of the type $\varphi(X_T)$ where $X$ 
being a solution of the SDE \eqref{eq:X} whose (marginal) law is ${\bf u(t)}$. 
%of $X_t$. If $X_0 = x_0 \in \R^d$ 
The idea consists in identifying uniquely the law of $X_0$.
That approach only works with multiple Dirac initial conditions.

 %$X_0= x_0 \in \R^d $
\smallbreak

\begin{rem} \label{Ralpha}
Let $\alpha \ge 0$ and $x \in \R^d$. 
 Suppose that there is a solution $X^x$ of SDE \eqref{EqLin} with  
$\xi = x$.
 \begin{enumerate}
\item  By Proposition \ref{PFundam},
  the $\shm_+\left(\R^d\right)$-valued mapping 
$t \mapsto \alpha\mathcal{L}\left(X^x_t\right)$ 
% ${\bf u}^{x}$ such that for all $t\in[0,T]$, ${\bf u}^x(t,\cdot)$ is the law of $X^x_  t$ 
is a solution of the PDE \eqref{Fokker} with initial value $\alpha\delta_x$.
\item
   Under Property \ref{GH1} (with respect to $\shc$),
  $t \mapsto \alpha\mathcal{L}\left(X^x_t\right)$ can be identified with ${\bf u}^{\alpha\delta_x}$
  and in particular
  $$ \int_{\R^d} {\bf u}^{\alpha\delta_x}\left(t\right)\left(dy\right) = \alpha, \
  \forall t \in [0,T].$$
\end{enumerate}
\end{rem}
In the sequel,
 whenever Assumption \ref{Lip1d} holds, $X^x$ denotes the unique solution of 
the SDE
 \eqref{EqLin} with initial value $x \in \R^d$.

 We start with the case of dimension $d = m = 1$.
 
\begin{prop} \label{propLip1}
({\bf Uniqueness:  Dirac initial conditions, one-dimensional case}).
 
We set $\mathcal{C} = \left(\alpha\delta_x\right)_{\alpha > 0,x \in \R}$.
  Suppose the
  %validity of Property \ref{GH1} with $\mathcal{C} = \left(\alpha\delta_x\right)_{\alpha > 0,x \in \R}$
  %and
  validity of Assumption \ref{Lip1d} with $d = m = 1$.
  We moreover suppose the validity of one of the two hypotheses below.
  \begin{enumerate}
  \item Assumption \ref{Zvon1a}.
%$\Sigma$ and $b$ are bounded.
  \item  Property \ref{GH1} holds with respect to $\shc$.
%  $\mathcal{C} = \left(\alpha\delta_x\right)_{\alpha > 0,x \in \R}$.
\end{enumerate}
Then,    
  for all $\mu \in \shm_+\left(\R\right)$, the PDE \eqref{EDPTerm0} with terminal value $\mu$ admits at most one solution in the sense of Definition \ref{Def} among the
  $\shm_+\left(\R\right)$-valued solutions starting in $\shc$.
\end{prop}

\begin{proof}  \
  \noindent 
%By Remark \ref{R1} 1. and 3. we recall that, since $b, \sigma$
%  are Lipschitz then Property \ref{GH1} is fulfilled.
By Lemma \ref{LC313} item 2.  Property \ref{GH1} is fulfilled
with respect to $\shc$.

  \noindent Fix $\left(x,y\right) \in \R^2$ and $\alpha,\beta \geq 0$ such that 
\begin{equation}\label{identity}
{\bf u}^{\alpha\delta_x}\left(T\right) = {\bf u}^{\beta\delta_y}\left(T\right).
\end{equation}
 Thanks to Proposition \ref{P1}, to conclude,  
it suffices to show that $\alpha = \beta$ and $x=y$.
 By item 2. of Remark \ref{Ralpha}, we have $\alpha = \beta$ and consequently $\mathcal{L}\left(X^x_T\right) = \mathcal{L}\left(X^y_T\right)$.
In particular $\mathbb{E}\left(X^x_T\right)  = \mathbb{E}\left(X^y_T\right)$. Since $b,\sigma$ are Lipschitz in space, they have bounded derivatives in the sense of distributions that we denote by $\partial_xb$ and $\partial_x\sigma$. 
\smallbreak
\noindent Set $Z^{x,y} := X^y - X^x$. We have
\begin{equation} \label{EDol}
Z^{x,y}_t = \left(y-x\right) + \int^{t}_{0}b^{x,y}_sZ^{x,y}_sds + \int^{t}_{0}\sigma^{x,y}_sZ^{x,y}_sdW_s, \forall
t\in [0,T],
 \end{equation}
\noindent where for a given $s \in [0,T]$
\be
b^{x,y}_s = \int^{1}_{0}\partial_xb\left(s,aX^y_s + (1-a)X^x_s\right)da ,\ \sigma^{x,y}_s = \int^{1}_{0}\partial_x\sigma\left(s,aX^y_s + (1-a)X^x_s\right)da.
\ee
\smallbreak
\noindent 
%It\^{o}'s formula gives 
The unique solution of \eqref{EDol} is well-known to be
\be 
Z^{x,y} = \exp\left(\int^{.}_{0}b^{x,y}_sds\right)\mathcal{E}\left(\int^{.}_{0}\sigma^{x,y}_sdW_s \right)(y-x),
\ee 
 where $\mathcal{E}\left(\cdot\right)$ denotes the Dol\'eans exponential.
 Finally, we have 
\be 
\mathbb{E}\left(\exp\left(\int^{T}_{0}b^{x,y}_sds\right)\mathcal{E}\left(\int^{.}_{0}\sigma^{x,y}_sdW_s \right)_T\right)\left(y-x\right) = 0.
\ee 
 Since  the quantity appearing in the expectation is strictly positive, we conclude $x = y$.
\end{proof}

\noindent We continue now with a discussion concerning the multidimensional case $d \geq 2$.
The  uniqueness result below only holds when the time-horizon is small enough.
Theorem \ref{propLipd} distinguishes two cases: the first one with regular,
 possibly
 degenerate, coefficients, the second one with non-degenerate, possibly irregular, coefficients. Later, in Section \ref{SGP},
 we will present in a framework of piecewise time-homogeneous coefficients
results which are valid  for any time-horizon.
\begin{thm} \label{propLipd}
({\bf Uniqueness: Dirac initial conditions, multi-dimensional case}). 

We set $\mathcal{C} = \left(\alpha\delta_x\right)_{\alpha > 0,x \in \R^d}$.
  % We suppose Property \ref{GH1} with $\mathcal{C} = \left(\alpha\delta_x\right)_{\alpha > 0, x \in \R^d}$ and
  We suppose the validity of either item (a)
 or (b) below.
  \begin{description}% \item{(a)} Assumptions
   % \ref{Lip1d} and \ref{Zvon1a}
    %$b, \Sigma$ are bounded
    %(or alternatively Property \ref{GH1} holds
    % with respect to $\mathcal{C}$).
% := \left(\alpha\delta_x\right)_{\alpha > 0,x \in \R^d}$).
\item{(a)} Assumptions
  \ref{Lip1d}
  %and \ref{Zvon1a}
    %$b, \Sigma$ are bounded
    and Property \ref{GH1} (for instance if Assumption \ref{Zvon1a}
holds)  with respect to $\mathcal{C}$.
\item{(b)} Assumptions \ref{Zvon1a}, \ref{Zvon1b}
   and \ref{Zvon3}.
\end{description}
There is $T > 0$ small enough such that the following holds.
 For all $\mu \in \shm_+\left(\R^d\right)$, the PDE \eqref{EDPTerm0}
 %with terminal value $\mu$ 
admits at most one solution in the sense of Definition \ref{Def} among the $\shm_+\left(\R^d\right)$-valued solutions starting in $\shc$.
%$\mathcal{C} := \left(\alpha\delta_x\right)_{\alpha > 0,x \in \R^d}$.
\end{thm}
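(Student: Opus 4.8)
The plan is to invoke Proposition~\ref{P1}, which reduces uniqueness of \eqref{EDPTerm} among $\shm_+(\R^d)$-valued solutions starting in $\shc$ to injectivity of the map $\nu \mapsto {\bf u}^\nu(T)$ on $\shc = (\alpha\delta_x)_{\alpha>0,x\in\R^d}$. So I fix $(x,y)\in(\R^d)^2$ and $\alpha,\beta>0$ with ${\bf u}^{\alpha\delta_x}(T) = {\bf u}^{\beta\delta_y}(T)$ and aim to conclude $\alpha=\beta$ and $x=y$. As in Proposition~\ref{propLip1}, the scalar mass is preserved (item 2 of Remark~\ref{Ralpha}), giving $\alpha=\beta$ immediately, and then $\mathcal{L}(X^x_T) = \mathcal{L}(X^y_T)$, where $X^x,X^y$ solve \eqref{EqLin} with initial data $x,y$. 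The task is therefore to show that equality of the time-$T$ laws forces $x=y$, and the one-dimensional argument via the scalar Dol\'eans exponential no longer applies in $d\ge 2$ because the linearized flow $Z^{x,y}:=X^y-X^x$ is now driven by matrix-valued coefficients.

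First I would derive, exactly as in the proof of Proposition~\ref{propLip1}, the linear SDE for $Z^{x,y}_t = X^y_t - X^x_t$, namely
\be
Z^{x,y}_t = (y-x) + \int_0^t B^{x,y}_s Z^{x,y}_s\,ds + \sum_{k=1}^m \int_0^t \Sigma^{x,y,k}_s Z^{x,y}_s\,dW^k_s,
\ee
where $B^{x,y}_s$ and $\Sigma^{x,y,k}_s$ are the matrix-valued averaged Jacobians of $b$ and of the $k$-th column of $\sigma$ along the segment joining $X^x_s$ and $X^y_s$ (well-defined a.e.\ under case (a) since Lipschitz coefficients have bounded distributional derivatives, and handled by a mollification/regularization argument under the smoothness part of case (b)). The aim is to show $\E|Z^{x,y}_T|^2$ is bounded below by a strictly positive multiple of $|y-x|^2$. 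Applying It\^o's formula to $|Z^{x,y}_t|^2$ and taking expectations yields a Gronwall-type estimate: the drift and quadratic-variation terms are controlled by $C\,\E|Z^{x,y}_s|^2$ with $C$ depending only on the uniform bounds on the Jacobians, so that $\E|Z^{x,y}_T|^2 \ge |y-x|^2 e^{-CT}$, which is strictly positive unless $x=y$. Since $\mathcal{L}(X^x_T) = \mathcal{L}(X^y_T)$ does \emph{not} directly say $Z^{x,y}_T=0$, I would instead exploit equality of the laws through a suitable functional: matching the first moments gives $\E(X^x_T)=\E(X^y_T)$, i.e.\ $\E(Z^{x,y}_T)=0$, and one combines this with the lower bound on $\E|Z^{x,y}_T|^2$ to force the contradiction when $x\ne y$ for $T$ small.

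Here the smallness of $T$ enters essentially, and this is the step I expect to be the main obstacle. The clean one-dimensional trick used strict positivity of a scalar exponential; in higher dimensions equality of laws is a much weaker hypothesis than $Z^{x,y}_T\equiv 0$, so I must extract a genuine contradiction from $\E(Z^{x,y}_T)=0$ together with a quantitative non-degeneracy of the flow. The natural route is to write $\E(Z^{x,y}_T) = M_T(y-x)$ where $M_T = \E\!\big(\text{first-variation matrix}\big)$ solves a linear matrix ODE with $M_0 = I$, and to show $M_T$ is invertible for $T$ small by a perturbation estimate $\|M_T - I\| \le e^{\|B\|_\infty T}-1 < 1$. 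Invertibility of $M_T$ then yields $y-x=0$ directly from $\E(Z^{x,y}_T)=0$, without needing the second-moment bound at all. The two cases (a) and (b) differ only in how one justifies differentiating the flow and bounding the first-variation process: under (a) one uses classical stochastic-flow regularity for Lipschitz coefficients, whereas under (b) the non-degeneracy Assumption~\ref{Zvon3} together with continuity of $\Sigma$ allows one to approximate by smooth coefficients and pass to the limit, the constant $C$ in the smallness threshold for $T$ depending on $\|b\|_\infty$, $\|\sigma\|_\infty$ and $\epsilon$. The final threshold on $T$ is the one making $\|M_T-I\|<1$, uniformly in $x,y$.
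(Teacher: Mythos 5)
Your treatment of case (a) is essentially the paper's own argument: after reducing to injectivity of $\nu \mapsto {\bf u}^{\nu}(T)$ via Proposition~\ref{P1} and extracting $\alpha=\beta$ and $\E(X^{x}_T)=\E(X^{y}_T)$ from Remark~\ref{Ralpha}, the paper also works with the linearized process $Z^{x,y}$, shows its local martingale part is a true martingale, and uses the second-moment Gronwall bound of Lemma~\ref{Lemma} to get
$\left|\E(Z^{x,y}_T)-(y-x)\right|\le \frac{K}{2}Te^{\frac{K}{2}T}\left|y-x\right|$,
which forces $x=y$ once $\frac{K}{2}Te^{\frac{K}{2}T}<1$. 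Your phrasing through invertibility of $M_T=\E(\Phi_T)$ is the same estimate in different clothing (note only that $M_t$ does not solve a closed deterministic ODE, since $B^{x,y}_t$ and the first-variation matrix are correlated, so the bound $\|M_T-I\|\le e^{\|B\|_\infty T}-1$ has to be replaced by the integrated moment estimate; this is cosmetic). Two small slips: the "lower bound $\E|Z^{x,y}_T|^2\ge |y-x|^2e^{-CT}$" does not follow from Gronwall as stated (you would get $|y-x|^2(2-e^{CT})$, say), but you correctly discard this route anyway.

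The genuine gap is case (b). There the coefficients are only bounded measurable ($b$) and bounded ($\sigma$) with $\Sigma$ continuous and uniformly elliptic; there is no Lipschitz regularity at all, so there is no stochastic flow, no first-variation process, and the two solutions $X^{x_1},X^{x_2}$ are only weak solutions that in general live on different probability spaces, so $Z=X^{x_2}-X^{x_1}$ cannot even be formed. Your plan to mollify the coefficients and pass to the limit cannot deliver a threshold on $T$ depending only on $\|b\|_\infty$, $\|\sigma\|_\infty$ and $\epsilon$: the perturbation estimate you rely on is controlled by the Lipschitz constants of the mollified coefficients, and these blow up as the mollification parameter tends to zero, so the admissible horizon degenerates to $T=0$. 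What is needed is a mechanism that converts ellipticity into regularity, and this is exactly what the paper uses: Zvonkin's results in \cite{z} produce, for $T$ small, a solution $\phi\in W^{1,2}_p$ of the vector-valued backward Kolmogorov system $\partial_t\phi+L_t\phi=0$, $\phi(T,x)=x$, with bounded $J\phi$ and $\phi(t,\cdot)$ injective; the It\^o--Krylov formula then makes $\phi(\cdot,X^i)$ a true martingale, so $\phi(0,x_i)=\E_{\P^i}(\phi(T,X^i_T))$, and equality of the terminal laws plus injectivity of $\phi(0,\cdot)$ gives $x_1=x_2$. Without an argument of this type (or some substitute exploiting parabolic regularity rather than flow differentiability), your case (b) does not go through.
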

 The proof of Theorem \ref{propLipd} in case (a) 
relies on a basic lemma of moments estimates.

\begin{lem} \label{Lemma} We suppose Assumption \ref{Lip1d}. 
Let $\left(x,y\right) \in \R^d\times\R^d$. Then,
 $ \sup_{t\in[0,T]}\mathbb{E}
\left(\left|X^x_t - X^y_t \right|^2 \right)  \le    \left|y-x\right|^2e^{KT},$
 with $K := 2K^b + \sum^{d}_{j=1}\left(K^{\sigma,j}\right)^2$, where 
\be
K^b := \sup_{s\in [0,T]}\left|\left|   \  \left|\left|Jb\left(s,\cdot\right)\right|\right|  \   \right|\right|_{\infty}
\ee
 and for all $j \in [\![1,d]\!]$
\be 
K^{\sigma,j} := \sup_{s\in [0,T]}\left|\left|   \   \left|\left|J\sigma_{.j}\left(s,\cdot\right)\right|\right| \   \right|\right|_{\infty},
\ee
where $\Vert \cdot \Vert$ stands for the sup-norm.
\end{lem}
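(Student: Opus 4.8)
The plan is to apply It\^o's formula to the squared difference of the two solutions and reduce the problem to a Gronwall inequality, exactly in the spirit of the computation in the proof of Proposition \ref{propLip1}. First I would introduce $Z_t := X^x_t - X^y_t$, which by \eqref{EqLin} satisfies
\[
Z_t = (x-y) + \int_0^t \bigl(b(r,X^x_r) - b(r,X^y_r)\bigr)\,dr + \int_0^t \bigl(\sigma(r,X^x_r) - \sigma(r,X^y_r)\bigr)\,dW_r,
\]
and then apply It\^o's formula to $|Z_t|^2$. This produces a drift term $2\langle Z_r, b(r,X^x_r) - b(r,X^y_r)\rangle$, the quadratic-variation term $\|\sigma(r,X^x_r) - \sigma(r,X^y_r)\|^2$, and a local martingale term $2\int_0^t \langle Z_r, \bigl(\sigma(r,X^x_r) - \sigma(r,X^y_r)\bigr)dW_r\rangle$.

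The next step is to take expectations, the key point being that the stochastic integral has vanishing expectation. This is where I expect the only genuine technical care to be needed: one must check that the integrand lies in $L^2(\Omega \times [0,T])$. Under Assumption \ref{Lip1d} the standard moment estimate yields $\sup_{t \in [0,T]} \E(|X^x_t|^2) < \infty$, and together with the linear growth of $\sigma$ this guarantees square-integrability, so that the stochastic integral is a true martingale. Alternatively one localises with a sequence of stopping times and passes to the limit by Fatou's lemma; either way this is routine and constitutes the main (mild) obstacle.

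The remaining steps are purely algebraic. Writing the increments through the fundamental theorem of calculus, namely $b(r,X^x_r) - b(r,X^y_r) = \bigl(\int_0^1 Jb(r, aX^x_r + (1-a)X^y_r)\,da\bigr) Z_r$ and analogously, column by column, for $\sigma$, and bounding each Jacobian's operator norm by its Frobenius norm, I obtain
\[
2\langle Z_r, b(r,X^x_r) - b(r,X^y_r)\rangle \le 2K^b\, |Z_r|^2, \qquad \|\sigma(r,X^x_r) - \sigma(r,X^y_r)\|^2 \le \Bigl(\sum_{j=1}^m (K^{\sigma,j})^2\Bigr) |Z_r|^2.
\]
Combining these bounds gives $\E(|Z_t|^2) \le |x-y|^2 + K \int_0^t \E(|Z_r|^2)\,dr$ with $K = 2K^b + \sum_{j=1}^m (K^{\sigma,j})^2$ as in the statement. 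Gronwall's lemma then yields $\E(|Z_t|^2) \le |x-y|^2 e^{Kt}$ for every $t \in [0,T]$, and taking the supremum over $t$ concludes the proof, since $|x-y| = |y-x|$.
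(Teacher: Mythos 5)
Your proof is correct and follows essentially the same route as the paper's: It\^o's formula applied to $|Z_t|^2$, the fundamental-theorem-of-calculus representation of the increments through the Jacobians $Jb$ and $J\sigma_{.j}$, and Gronwall's lemma. The only point worth flagging is the justification that the stochastic integral is a true martingale: the paper bounds $\mathbb{E}\left(\left[M\right]_T^{1/2}\right)$ using the sup-moment estimate $\mathbb{E}\left(\sup_{t\le T}\left|Z_t\right|^2\right)<\infty$, whereas your direct square-integrability claim for the integrand actually requires a fourth-moment or sup-moment bound rather than just $\sup_t\mathbb{E}\left(\left|X^x_t\right|^2\right)<\infty$; your localization-plus-Fatou alternative covers this cleanly, so the argument stands.
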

\begin{prooff} \ (of Theorem  \ref{propLipd}).

%\begin{enumerate}

  Taking into account Property \ref{GH1}
 we fix $\left(x_1,x_2\right) \in \R^d\times\R^d, \alpha,\beta \geq 0$
  such that 
\begin{equation}
{\bf u}^{\alpha\delta_{x_1}}\left(T\right) = {\bf u}^{\beta\delta_{x_2}}\left(T\right).
\end{equation}
% \noindent Once again, by Remark \ref{Ralpha} 2., this implies in particular
To conclude, by  Proposition \ref{P1},
it suffices again to show $\alpha = \beta$ and $x_1 = x_2$.
\smallbreak
\begin{enumerate}
\item We write the proof in the case (a), in particular
  under Assumption \ref{Lip1d}.
  Once again, item 2. of Remark \ref{Ralpha}
  gives $\alpha = \beta$ and
\begin{equation} \label{Eequal}
  \mathbb{E}\left(X^{x_1}_T\right) = \mathbb{E}\left(X^{x_2}_T\right).
  \end{equation}
\smallbreak
\noindent Adopting the same notations as in the proof of Lemma \ref{Lemma},
a similar argument as in \eqref{EMForm}, together with \eqref{sup}
(in the Appendix) allows to show that 
the local martingale part of $Z^{x_1,x_2} = X^{x_2} - X^{x_1}$ defined in
\eqref{EZxy} is a true martingale. So, taking the expectation
in  \eqref{EMForm} with $x=x_1, y = x_2$,
 by Lemma \ref{Lemma} we obtain
 \begin{align*}
   \left|\mathbb{E} \left(X^{x_2}_T - X^{x_1}_T\right) - (x_2-x_1)\right| &{}\le
    K_b \int^{T}_{0}\mathbb{E}  \vert X^{x_2}_r - X^{x_1}_r \vert dr \\
              &{}\leq  K_b \int^{T}_{0}
     \sqrt{\mathbb{E}\left(\vert X^{x_2}_r - X^{x_1}_r \vert\right)^2}dr \\
%&{}\leq \left(T \int^{T}_{0}\mathbb{E}\left[\left|B^{x,y}_r\left(X^y_r - X^x_r\right)\right|^2\right]dr\right)^{\frac{1}{2}} \\
&{}\leq \frac{K}{2} T e^{\frac{K}{2}T} \left|x_2-x_1\right|. 
\end{align*}
 Remembering \eqref{Eequal},
this implies
\be
\left(1 - \frac{K}{2}Te^{\frac{K}{2}T}\right)\left|x_2-x_1\right| \leq 0.
\ee 
Taking $T$ such that $\frac{K}{2}T < M$ with $Me^M < 1$, we have
$1 - \frac{K}{2}Te^{\frac{K}{2}T} > 0$, which implies $ \vert x_2 - x_1 \vert=  0$.
\item We discuss the case (b), i.e. we suppose Assumptions \ref{Zvon1a},  \ref{Zvon1b},
 and \ref{Zvon3}.
Firstly, point 1. of Theorem 1. in \cite{z} ensures the existence of probability spaces $\left(\Omega^i, \mathcal{F}^i,\mathbb{P}^i\right), \ i \in \left\{1,2\right\}$ on which are defined respectively two $m$-dimensional Brownian motions $W^1,W^2$ and two processes $X^1,X^2$ such that
\be
X^i_t = x_i + \int^{t}_{0}b\left(s,X^i_s\right)ds + \int^{t}_{0}\sigma\left(s,X^i_s\right)dW^i_s, \ \mathbb{P}^i\rm{-a.s.}, t \in [0,T].
\ee
\smallbreak

\noindent Again item 2. of Remark \ref{Ralpha} implies $\alpha_1 = \alpha_2$ and
\begin{equation} \label{TermLaw}
\mathcal{L}_{\mathbb{P}^1}\left(X^1_T\right) =  \mathcal{L}_{\mathbb{P}^2}\left(X^2_T\right).
\end{equation}
 Secondly, point b. of Theorem 3 in \cite{z} shows that 
for every given bounded $D \subset \R^d$, for all
$\phi: [0,T] \times \R^d \rightarrow \R^d$ belonging to
$W^{1,2}_p\left([0,T]\times D\right)$ (see Definition of that space in \cite{z})
for a given $p > d+2$, for all $t\in [0,T], i \in \left\{1,2\right\}$, we have
\begin{equation}\label{TSDE}
\phi\left(t,X^i_t\right) = \phi\left(0,x_i\right) + \int^{t}_{0}\left(\partial_t + L_s\right)\phi\left(s,X^i_s\right)ds + \int^{t}_{0}J\phi\left(s,X^i_s\right)\sigma\left(s,X^i_s\right)dW^i_s, \ \mathbb{P}^i\rm{-a.s.}
\end{equation}
 where the application of $\partial_t + L_t, t \in [0,T]$ has to be understood componentwise.
\smallbreak 
\noindent Thirdly, Theorem 2. in \cite{z} shows that if $T$ is sufficiently small, then the system of $d$ PDEs
\begin{equation}\label{E317}
\forall \left(t,x\right) \in [0,T]\times\R^d, \
\begin{cases}
\partial_t\phi\left(t,x\right) + L_t \phi\left(t,x\right) = 0,  \\
\phi\left(T, x\right) = x,
\end{cases}
\end{equation}
 admits a solution $\phi$ in $W^{1,2}_p\left([0,T]\times D\right)$ for all $p > 1$ and all
bounded $D \subset \R^d$.
Moreover the partial derivatives of $\phi$ in space are bounded (in particular $J \phi$ is bounded) and 
 $\phi\left(t,\cdot\right)$ is injective for all $t \in [0,T]$. 
\smallbreak
\noindent Combining now \eqref{E317}  with identity \eqref{TSDE}, we observe that $\phi\left(.,X^i\right), i \in \left\{1,2\right\},$ are local martingales. Using additionally the fact
that $J\phi$ and $\sigma$ are bounded,
it is easy to show that they are true martingales. Taking the expectation
in \eqref{TSDE} with respect to $\P^i, i =1,2$,
 gives 
\be
\phi\left(0,x_i\right) =  \mathbb{E}_{\mathbb{P}^i}\left(\phi\left(T,X^i_T\right)\right),  i \in \left\{1,2\right\}.
\ee 
 In parallel, identity \eqref{TermLaw} gives 
\be
\mathbb{E}_{\mathbb{P}^1}\left(\phi\left(T,X^1_T\right)\right) =  \mathbb{E}_{\mathbb{P}^2}\left(\phi\left(T,X^2_T\right)\right).
\ee
 So, $\phi\left(0,x_1\right) =\phi\left(0,x_2\right)$. We conclude
that $x_1 = x_2$ since  $\phi\left(0,\cdot\right)$ is injective.
\end{enumerate}
\end{prooff}

\subsection{Uniqueness: the case of bounded non-degenerate
  coefficients}
\label {SGP}

In this section we consider the case of
 (possibly piecewise) time-homogeneous coefficients
% bounded and H\"{o}lder coefficients 
in dimension $d \geq 1$.
% We suppose that Assumption \ref{Zvon3} holds and
%We consider the following one. 
We make use of an analytic technique based on semigroups which requires
bounded coefficients (Assumption \ref{Zvon1a}), non-degeneracy 
(Assumption \ref{Zvon3}) and an additional H\"older regularity assumption
of the coefficients.
  
We start with the time-homogeneouse case stating the following.

\begin{ass}\label{Lun1}
\begin{enumerate}
\item $b,\Sigma$ are time-homogeneous.
  %and bounded.  %and uniformly continuous in space, inutile puisque hyp Holder ensuite.
%\end{ass}
%\begin{ass}\label{Lun2}
\item
For all $\left(i,j\right) \in [\![1,d]\!]^2$,
$b_i, \Sigma_{ij} \in \mathcal{C}^{2\alpha}\left(\R^d\right)$,
 for a given $\alpha \in ]0,\frac{1}{2}[$.
\end{enumerate}
\end{ass}
We refer  to the differential operator $L_t$ introduced in \eqref{EqOpL} 
and
we simply set here $L \equiv L_t$.
\begin{rem} \label{RPreliminary}
  Suppose the validity of Assumptions \ref{Zvon1a},
  %\ref{Zvon1b} NADIA ICI ON SUPPOSE HOELDER CONTINUITY
 \ref{Zvon3}, \ref{Lun1}.
\begin{enumerate} 
\item Let $T > 0$. Proposition 4.2 in \cite{figalli} implies that, for every
 $\nu \in \shm_f\left(\R^d\right)$, there exists a unique 
$\shm_f\left(\R^d\right)$-valued solution of the PDE \eqref{Fokker} with initial value $\nu$,
  which will be again denoted by ${\bf u}^{\nu}$.
  We notice in particular that Property \ref{GH1} holds.

 In the sequel $T$ will be omitted. 
\item We remark that the uniqueness result mentioned in item 1.
is unknown in the case of general bounded coefficients.
In the general framework, only a uniqueness result for non-negative 
solutions is available, see Remark \ref{R1} point 1.
    \item
   Since $L$ 
%(defined in \eqref{EqOpL}
is time-homogeneous, taking into account
   Assumptions \ref{Zvon3}, \ref{Lun1}, 
   % hold then,
   operating a shift,
   uniqueness for the PDE \eqref{Fokker} 
  % stated in Assumption \ref{GH1}
%Assumption \ref{GH1}
   also holds replacing the initial time $0$ by any other initial time,
   for every initial value in $\shm_f\left(\R^d\right)$, 
   with any other
maturity $T$.
\end{enumerate}
\end{rem}
It is significant to remark that the uniqueness theorem
below holds in the class  finite signed measures valued functions.
\begin{thm} \label{P315}
({\bf Uniqueness: the case of non-degenerate time-homogeneous 
coefficients}).

Suppose the validity of Assumptions \ref{Zvon1a}, 
% NADIA. HOELDER CONTINU IMPLIQUE CONTINU
\ref{Zvon3} and \ref{Lun1}. 
Then, for all $\mu \in \shm_f\left(\R^d\right)$, the PDE \eqref{EDPTerm0} 
with terminal value $\mu$ admits at most one $\shm_f\left(\R^d\right)$-valued solution in the sense of Definition \ref{Def}.
%among the solutions starting in $\shp(\R^d)$.
\end{thm}
 
% \be
% L := \frac{1}{2}\sum^{d}_{i,j=1}\Sigma_{ij}\partial_{ij} +  \sum^{d}_{i=1}b_i\partial_{i}.
% \ee
  By Theorems 3.1.12, 3.1.14 and Corollary 3.1.16 in \cite{lunardi_1995} 
the differential operator $L$ suitably extends as a map 
 $\shd(L) = \mathcal{C}^{2\alpha+2}(\R^d) \subset \mathcal{C}^{2\alpha}(\R^d) 
 \mapsto  \mathcal{C}^{2\alpha}\left(\R^d\right)$ 
%E:= \mathcal{C}^{2\alpha}\left(\R^d\right)$ 
and that extension 
is sectorial, see  Definition 2.0.1 in \cite{lunardi_1995}.
We set $E:= \mathcal{C}^{2\alpha}\left(\R^d\right)$. 
%in $\mathcal{C}^{2\alpha}\left(\R^d,\R\right)$ as an element of $\mathcal{L}\left(\mathcal{C}^{2\alpha}\left(\R^d,\R\right)\right)$ with domain $\mathcal{C}^{2\alpha+2}\left(\R^d,\R\right)$, in the sense of
% Definition 2.0.1 in the same reference.
By the considerations below that Definition, in (2.0.2) and (2.0.3)
therein, one defines
$P_t := e^{tL}, P_t: E \rightarrow E, t \geq 0$.
 By Proposition 2.1.1 in \cite{lunardi_1995},
$(P_t)_{t \geq 0}$ is a semigroup and $t \mapsto P_t$ is analytical
on $]0,+\infty[$ with values in $\mathcal{L}\left(E\right)$, with respect to
%$\left|\left|.\right|\right|_{2\alpha}$. 
$\left|\left|.\right|\right|_{E}$. 
\smallbreak

\noindent Before proving the theorem, we provide two lemmata.
\begin{lem} \label{key_1}
\noindent Suppose the validity of Assumptions \ref{Zvon1a},
 \ref{Zvon3} and \ref{Lun1}. 
 Then, for all $\phi \in E$
 and all $\nu \in \shm_f\left(\R^d\right)$, the function from $\R^*_+$ to $\mathbb{R}$
\be
t \mapsto \int_{\R^d}P_t\phi\left(x\right)\nu\left(dx\right) 
\ee
 is analytic.
\end{lem}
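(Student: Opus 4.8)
The plan is to realize the scalar function as a composition of the $\mathcal{L}(E)$-valued analytic map $t \mapsto P_t$ with two bounded linear maps, and then to use the fact that real-analyticity of Banach-space-valued functions is preserved under application of bounded linear operators.

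First I would observe that integration against $\nu$ defines a bounded linear functional on $E$. Indeed, every $\psi \in E = \mathcal{C}^{2\alpha}(\R^d)$ is bounded with $\left|\left|\psi\right|\right|_{\infty} \le |\psi|_{2\alpha}$, and since $\nu \in \shm_f(\R^d)$ has finite total variation $|\nu|(\R^d) < \infty$, one has
$$\left| \int_{\R^d}\psi(x)\nu(dx)\right| \le \left|\left|\psi\right|\right|_{\infty}\, |\nu|(\R^d) \le |\nu|(\R^d)\, |\psi|_{2\alpha}.$$
Hence the functional $\ell_\nu : \psi \mapsto \int_{\R^d}\psi\,d\nu$ belongs to the topological dual $E^*$.

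Next, I would fix $\phi \in E$ and note that the evaluation map $\mathcal{L}(E) \ni S \mapsto S\phi \in E$ is linear and bounded (with norm at most $|\phi|_{2\alpha}$). Since $t \mapsto P_t$ is analytic from $]0,+\infty[$ into $\mathcal{L}(E)$ in operator norm (as recalled above from Proposition 2.1.1 in \cite{lunardi_1995}), composing with this evaluation yields that $t \mapsto P_t\phi$ is analytic from $]0,+\infty[$ into $E$. Composing once more with $\ell_\nu \in E^*$ gives that $t \mapsto \ell_\nu(P_t\phi) = \int_{\R^d}P_t\phi(x)\nu(dx)$ is analytic from $]0,+\infty[$ into $\R$, which is exactly the claim.

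The only point requiring care, which is the heart of the argument, is the justification that real-analyticity is stable under application of a bounded linear map. Locally around any $t_0 > 0$ one writes $P_t = \sum_{n\ge 0}A_n(t-t_0)^n$ with $A_n \in \mathcal{L}(E)$ and operator-norm convergence on some disc; applying the continuous linear evaluation at $\phi$ and then $\ell_\nu$ term by term produces the scalar power series $\sum_{n\ge 0} \ell_\nu(A_n\phi)(t-t_0)^n$, whose coefficients satisfy $|\ell_\nu(A_n\phi)| \le \left|\left|\ell_\nu\right|\right|_{E^*}\,|\phi|_{2\alpha}\,\left|\left|A_n\right|\right|_{E}$, so it converges on the same disc. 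This delivers the sought analyticity, and is otherwise routine.
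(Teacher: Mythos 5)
Your proposal is correct and follows essentially the same route as the paper's own (much terser) proof: the paper likewise invokes the analyticity of $t \mapsto P_t$ in $\mathcal{L}(E)$ together with the boundedness of the linear functional $\psi \mapsto \int_{\R^d}\psi\,d\nu$ on $E$. Your write-up merely makes explicit the details (the bound $\Vert\psi\Vert_\infty \le |\psi|_{2\alpha}$, finite total variation of $\nu$, and the term-by-term application of bounded linear maps to the local power series) that the paper leaves to the reader.
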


\begin{proof} \
The result can be easily established 
using the fact that $\phi \mapsto P_t \phi$ with values in $\shl(E)$
is analytic and the fact that the map $\psi \mapsto \int_{\R^d} \psi(x) \nu(dx)$ 
is linear and bounded.

\end{proof}
\begin{lem} \label{key_2}
  %%%NADIA CHECK HOELDER
Suppose the validity of Assumptions \ref{Zvon1a}, \ref{Zvon3} and \ref{Lun1}. Let $T > 0$.
Then for all $\nu \in \shm_f\left(\R^d\right)$, $t \in [0,T]$ and 
%$\phi \in \mathcal{C}^{\infty}_c\left(\R^d\right)$,
$\phi \in E$
%{C}_b ()\left(\R^d\right)$,
we have the identity
\begin{equation} \label{EL310}
\int_{\R^d}P_{t}\phi\left(x\right)\nu\left(dx\right) = \int_{\R^d}\phi\left(x\right){\bf u}^{\nu}\left(t\right)\left(dx\right),
\end{equation}
where ${\bf u}^{\nu}$ was defined in point 1. of Remark \ref{RPreliminary}.
\end{lem}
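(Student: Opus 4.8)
The plan is to establish the identity \eqref{EL310} by showing that both sides, viewed as functions of $t$, solve the same evolution equation (in weak form) and coincide at $t=0$, then to invoke the uniqueness guaranteed by item 1 of Remark \ref{RPreliminary}. First I would fix $\nu \in \shm_f(\R^d)$ and define the measure-valued map $t \mapsto {\bf v}(t)$ by declaring, for every test function $\phi \in E$,
\begin{equation*}
\int_{\R^d} \phi(x)\, {\bf v}(t)(dx) := \int_{\R^d} P_t \phi(x)\, \nu(dx).
\end{equation*}
The right-hand side is well-defined and finite because $P_t$ maps $E = \mathcal{C}^{2\alpha}(\R^d)$ into $E$ and $\nu$ is a finite signed measure; moreover by Lemma \ref{key_1} it is analytic in $t$ on $\R^*_+$, hence in particular differentiable, which lets me differentiate under the pairing.

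The core computation is to check that ${\bf v}$ solves the forward Fokker-Planck equation \eqref{Fokker} with initial datum $\nu$ in the weak sense of \eqref{weakbis}. For $\phi \in \mathcal{C}^\infty_c(\R^d)$ (or more generally in $\shd(L) = \mathcal{C}^{2\alpha+2}(\R^d)$, where $L\phi$ makes sense in $E$ and equals $L_s\phi$ pointwise), the semigroup property yields $\frac{d}{dt} P_t \phi = L P_t \phi = P_t L \phi$, so that
\begin{equation*}
\frac{d}{dt}\int_{\R^d} P_t\phi(x)\,\nu(dx) = \int_{\R^d} P_t (L\phi)(x)\,\nu(dx) = \int_{\R^d} L\phi(x)\, {\bf v}(t)(dx).
\end{equation*}
Integrating in time from $0$ to $t$ and using $P_0 = \mathrm{Id}$ to identify the initial value $\int \phi\, d{\bf v}(0) = \int \phi\, d\nu$ produces exactly the weak formulation \eqref{weakbis}, namely that ${\bf v}$ is an $\shm_f(\R^d)$-valued solution of \eqref{Fokker} with ${\bf v}(0) = \nu$. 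I must take care that the smooth compactly supported $\phi$ used in Definition \eqref{weakbis} lie in the domain of $L$ and that $L\phi$ coincides with the classical $L_s\phi$; this holds because $\mathcal{C}^\infty_c(\R^d) \subset \mathcal{C}^{2\alpha+2}(\R^d)$ and the sectorial extension of $L$ from \cite{lunardi_1995} restricts to the classical differential operator on smooth functions.

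Finally I would appeal to item 1 of Remark \ref{RPreliminary}, which asserts (via Proposition 4.2 in \cite{figalli}) that for each $\nu \in \shm_f(\R^d)$ there is a \emph{unique} $\shm_f(\R^d)$-valued solution ${\bf u}^\nu$ of \eqref{Fokker} with initial value $\nu$. Since both ${\bf v}$ and ${\bf u}^\nu$ are such solutions, uniqueness forces ${\bf v}(t) = {\bf u}^\nu(t)$ for all $t \in [0,T]$, which is precisely \eqref{EL310}. The main obstacle I anticipate is the justification of the differentiation-under-the-integral and of the interchange $\frac{d}{dt}P_t\phi = P_t L\phi$ in the topology of $E$: one needs that $t \mapsto P_t\phi$ is differentiable with values in $E$ for $\phi \in \shd(L)$ (a standard property of analytic semigroups, guaranteed here by Proposition 2.1.1 in \cite{lunardi_1995} together with sectoriality), and that the bounded linear functional $\psi \mapsto \int \psi\, d\nu$ commutes with this $E$-valued derivative, which follows from its continuity. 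Once these analytic-semigroup facts are in place the identification of the weak formulations is routine.
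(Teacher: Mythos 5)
Your plan is correct and follows essentially the same route as the paper: define ${\bf v}(t)$ by the pairing $\phi \mapsto \int_{\R^d} P_t\phi\, d\nu$, verify via $LP_t\phi = P_tL\phi$ and the analytic-semigroup calculus of \cite{lunardi_1995} that ${\bf v}$ solves \eqref{Fokker} weakly with ${\bf v}(0)=\nu$, and conclude by the uniqueness of Remark \ref{RPreliminary}. The only detail you gloss over (and which the paper spells out) is that the functional $\phi \mapsto \int_{\R^d} P_t\phi\, d\nu$ is actually represented by a finite measure, which requires the sup-norm contraction $\Vert P_t\phi\Vert_\infty \le \Vert\phi\Vert_\infty$ and a Riesz-type argument rather than mere boundedness on $E$.
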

\begin{proof}
\
\noindent 
Let $\nu \in \shm_f\left(\R^d\right)$.
 We denote by ${\bf v}^{\nu}$ the mapping from $[0,T]$ to 
$\mathcal{M}_f\left(\R^d\right)$ such that $\forall t \in [0,T]$,
 $\forall \phi \in E$
\begin{equation} \label{ERiesz}
\int_{\R^d}\phi(x){\bf v}^{\nu}\left(t\right)\left(dx\right)= \int_{\R^d}P_{t}\phi(x)\nu (dx).
\end{equation}
Previous expression defines the measure ${\bf v}^{\nu}(t,\cdot)$ since
$\phi \mapsto  \int_{\R^d}P_{t}\phi(x)\nu (dx)$ is continuous with respect 
to the sup-norm, using $\Vert P_t\phi \Vert_\infty \le \Vert \phi \Vert_\infty$,
and Lebesgue's dominated convergence theorem.
 By approximating the elements of $E$ with 
elements of $\mathcal{C}^{\infty}_c\left(\R^d\right)$,
%(the space of smooth real functions defined on $\R^d$
%with compact support),
 it will be enough to prove  
\eqref{EL310} for $\phi \in \mathcal{C}^{\infty}_c\left(\R^d\right)$.

Our idea is to show that ${\bf v}^{\nu}$ is an $\shm_f\left(\R^d\right)$-valued solution of \eqref{Fokker} with initial value $\nu$, so that
${\bf v}^{\nu} = {\bf u}^{\nu}$ via point 1. of Remark \ref{RPreliminary}.
This will prove \eqref{EL310} for $\phi \in \mathcal{C}^{\infty}_c\left(\R^d\right)$.
Let $t \in [0,T]$ and
$\phi \in \mathcal{C}^{\infty}_c\left(\R^d\right)$. On the one hand,
point (i) of Proposition 2.1.1 in \cite{lunardi_1995} gives
\begin{equation} \label{LP}
LP_t\phi = P_tL\phi, 
\end{equation}
 since $\mathcal{C}^{\infty}_c\left(\R^d\right) \subset \mathcal{D}\left(L\right) = \mathcal{C}^{2\alpha + 2}\left(\R^d,\R\right)$. On the other hand, for all $s \in [0,t]$, we have 
\begin{align*}
\left|LP_s\phi\right|_{E} &{}= \left|P_sL\phi\right|_{2\alpha} \\
&{}\leq \left|\left|P_s\right|\right|_{E}\left|L\phi\right|_{E} \\
&{}\leq M_0e^{\omega s}\left|L\phi\right|_{E}, 
\end{align*}
 with $M_0,\omega$ the real parameters appearing in Definition 2.0.1 in \cite{lunardi_1995} and using point (iii) of Proposition 2.1.1 in the same reference.
 Then the mapping $s \mapsto LP_s\phi$ belongs obviously to
 $L^1([0,t];E)$ and point (ii) of Proposition 2.1.4 in \cite{lunardi_1995} combined with identity \eqref{LP} gives 
\be
P_t\phi = \phi + \int^{t}_{0}P_s L\phi ds.
\ee
 Back to our main goal, using in particular Fubini's theorem, we have 
\begin{align*}
\int_{\R^d}P_{t}\phi\left(x\right)\nu\left(dx\right) &{} = \int_{\R^d}\phi\left(x\right)\nu\left(dx\right) + \int_{\R^d}\int^{t}_{0}P_sL\phi\left(x\right)ds\nu\left(dx\right) \\
&{}= \int_{\R^d}\phi\left(x\right)\nu\left(dx\right) + \int^{t}_{0}\int_{\R^d}P_sL\phi\left(x\right)\nu\left(dx\right)ds  \\
&{}= \int_{\R^d}\phi\left(x\right)\nu\left(dx\right) + \int^{t}_{0}\int_{\R^d}L\phi\left(x\right){\bf v}^{\nu}\left(s\right)\left(dx\right)ds. 
\end{align*}
 This shows that ${\bf v}^{\nu}$ is a solution of
 the PDE \eqref{Fokker}.
\end{proof}

\begin{prooff} ((of Theorem \ref{P315}).

\noindent Let $\nu,\nu' \in \shm_f\left(\R^d\right)$ such that
\be
\mu_T : = {\bf u}^{\nu}\left(T\right) = {\bf u}^{\nu'}\left(T\right).
\ee
%\noindent Obviously ${\bf u}^{\nu}$ and ${\bf u}^{\nu'}$
%extend to $[0,2T]$.
Thanks to Proposition \ref{P2}, it suffices to show that
$\nu = \nu'$ i.e.
 \be
\forall \phi \in \mathcal{C}^{\infty}_c\left(\R^d\right),\ \int_{\R^d}\phi\left(x\right)\nu\left(dx\right) = \int_{\R^d}\phi\left(x\right)\nu'\left(dx\right).
\ee
 Since $T > 0$ is arbitrary, by Remark \ref{RPreliminary}
we can consider ${\bf u}^{\nu,2T}$ and ${\bf u}^{\nu',2T}$,
defined as the corresponding  ${\bf u}^{\nu}$ and ${\bf u}^{\nu'}$
functions obtained replacing the horizon $T$ with $2T$.
They are defined 
on $[0,2T]$  and
by Remark \ref{RPreliminary} 1.
(uniqueness on $[0,T]$),
 they constitute extensions of the initial ${\bf u}^{\nu}$ and ${\bf u}^{\nu'}$.

\noindent By Remark \ref{RPreliminary} 3., the
 uniqueness of an $\shm_f\left(\R^d\right)$-valued solution of the PDE
\eqref{Fokker}
(for $t \in  [T,2T]$, with $T$ as initial time)
holds for   
\begin{equation}\label{FPShift}
\begin{cases}
\partial_t {\bf u}(\tau) = L^*{\bf u}(\tau), \  T \leq \tau \leq 2T \\
{\bf u}(T) = \mu_T. \\
\end{cases}
\end{equation}
Now, the functions ${\bf u}^{\nu, 2T}$ and ${\bf u}^{\nu', 2T}$ solve
\eqref{FPShift} on $[T,2T]$.
This gives in particular 
\begin{equation} \label{IdLawBis}
\forall \tau \geq T, \ \forall \phi \in \mathcal{C}^{\infty}_c\left(\R^d\right), \ \int_{\R^d}\phi\left(x\right){\bf u}^{\nu,2T}\left(\tau\right)\left(dx\right) = \int_{\R^d}\phi\left(x\right){\bf u}^{\nu',2T}\left(\tau\right)\left(dx\right).
\end{equation}
 Fix $\phi \in \mathcal{C}^{\infty}_c\left(\R^d\right)$. Combining
now the results of Lemmata \ref{key_1} 
and \ref{key_2}, 
we obtain that the function
\begin{equation} \label{ETau}
\tau \mapsto  \int_{\R^d}\phi\left(x\right){\bf u}^{\nu,2T}\left(\tau\right)\left(dx\right) - \int_{\R^d}\phi\left(x\right){\bf u}^{\nu',2T}\left(\tau\right)\left(dx\right),
\end{equation}
\noindent defined on $[0, 2T]$, is zero on $[T,2T]$ and analytic on 
$]0,2T]$. Hence it is zero on $]0,2T]$.
By \eqref{EL310} we obtain
\begin{equation} \label{ETaubis}
  \int_{\R^d}P_{\tau}\phi(x) \left(\nu - \nu'\right)
  \left(dx\right) = 0, \ \forall  \tau \in ]0, 2T].
\end{equation}
Separating $\nu$  and $\nu'$ in positive and negative components,
we can finally apply
dominated convergence theorem in \eqref{ETau} to send $\tau$ to $0+$.
This is possible
thanks to points
 (i) of Proposition 2.1.4 and (iii) of Proposition 2.1.1 in \cite{lunardi_1995}
together with the representation \eqref{EL310}. 
Indeed $P_\tau\phi\left(x\right) \rightarrow \phi\left(x\right)$  for every $\phi \in E, x \in \R^d$ when $\tau \rightarrow 0+$.
This shows $\nu = \nu'$ and ends the proof.

\end{prooff}
 For the sake of applications it is useful to formulate 
a piecewise time-homogeneous version of Theorem \ref{P315}.

\begin{corro} \label{C313}
({\bf Uniqueness: the case of non-degenerate piecewise time-homogeneous 
coefficients}).

 Let $n \in \N^*$.
Let 
$ 0 = t_0 < \ldots < t_n = T$ be a partition. 
For  $k \in [\![2,n]\!]$ (resp. $k=1$) we
denote $I_k = ]t_{k-1},t_k]$ (resp. $[t_{0},t_1]$).
Suppose that
the following holds.
\begin{enumerate}
\item For all $k \in [\![1,n]\!]$, the restriction of $\sigma$ (resp. $b$)  to $I_k \times \R^d$ 
is a time-homogeneous function $\sigma^k: \R^d \rightarrow M_{d}(\R)$
 (resp. $b^k: \R^d \rightarrow \R^d$).
\item Assumption \ref{Zvon3}.
\item Assumptions \ref{Zvon1a}
 and \ref{Lun1} are verified for each
$\sigma^k, b^k$ and $\Sigma^k$, where we have
set $\Sigma^k := \sigma^k {\sigma^k}^\top$.

\end{enumerate}
 Then, for all $\mu \in \shm_f\left(\R^d\right)$, the PDE
\eqref{EDPTerm0} with terminal value $\mu$ admits at most one $\shm_f\left(\R^d\right)$-valued solution in the sense of Definition \ref{Def}.
%among solutions starting in $\shp(\R^d)$. %${\bf u}$ such that for all $k \in [\![0,n-1]\!]$, ${\bf u}\left(t_k,\cdot\right) \in \shc$.
\end{corro}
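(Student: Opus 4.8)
The plan is to prove uniqueness by a backward induction over the intervals $I_n, I_{n-1}, \ldots, I_1$ of the partition, applying on each of them the already-established time-homogeneous uniqueness result, Theorem \ref{P315}. Fix $\mu \in \shm_f\left(\R^d\right)$ and let ${\bf u}^1, {\bf u}^2$ be two $\shm_f\left(\R^d\right)$-valued solutions of \eqref{EDPTerm} in the sense of Definition \ref{Def} with common terminal value ${\bf u}^1(T) = {\bf u}^2(T) = \mu$; the goal is to show ${\bf u}^1 = {\bf u}^2$ on all of $[0,T]$. Observe first that the defining identity \eqref{weak} forces $t \mapsto \int_{\R^d}\phi(y){\bf u}^i(t)(dy)$ to be continuous for every test function, so that the values ${\bf u}^i(t_k)$ at the partition nodes are unambiguous and match from both sides.

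The key structural step I would isolate is that the restriction of any solution of \eqref{EDPTerm} to a subinterval $[t_{k-1},t_k]$ is again a solution of a terminal-value problem, now with data prescribed at $t_k$. Writing \eqref{weak} at a generic $t \in [t_{k-1},t_k]$ and at $t = t_k$ and subtracting yields, for every $\phi \in \mathcal{C}^{\infty}_c\left(\R^d\right)$,
\begin{equation*}
\int_{\R^d}\phi(y){\bf u}(t)(dy) = \int_{\R^d}\phi(y){\bf u}(t_k)(dy) - \int_{t}^{t_k}\int_{\R^d}L_s\phi(y){\bf u}(s)(dy)\,ds,
\end{equation*}
which is exactly the weak formulation of the Fokker--Planck equation with terminal condition ${\bf u}(t_k)$ at $t_k$ on $[t_{k-1},t_k]$. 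On this interval the coefficients coincide with the time-homogeneous data $\sigma^k, b^k$, so $L_s \equiv L^k$ there; the ambiguity of the coefficient at the single node $t_{k-1}$ is irrelevant, as it does not affect the $ds$-integral.

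I would then run the induction with hypothesis ``${\bf u}^1 = {\bf u}^2$ on $[t_k,T]$''. The base case $k=n$ holds trivially since $[t_n,T]=\{T\}$ and ${\bf u}^1(T)={\bf u}^2(T)=\mu$. For the inductive step, the hypothesis gives in particular ${\bf u}^1(t_k) = {\bf u}^2(t_k) =: \mu_k \in \shm_f\left(\R^d\right)$, and by the structural observation the two restrictions to $I_k$ are $\shm_f\left(\R^d\right)$-valued solutions of one and the same time-homogeneous terminal-value problem (coefficients $\sigma^k, b^k$, terminal data $\mu_k$ at $t_k$). By hypotheses 2. and 3. of the statement, Assumptions \ref{Zvon3} and \ref{Lun1} hold for $\sigma^k, b^k, \Sigma^k$, so Theorem \ref{P315}, whose horizon is arbitrary, applies on $I_k$ after translating $t_{k-1}$ to the origin; the shift is legitimate because the coefficients are genuinely time-homogeneous on $I_k$ (hypothesis 1.), cf. Remark \ref{RPreliminary} 3. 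This yields ${\bf u}^1 = {\bf u}^2$ on $I_k$, hence on $[t_{k-1},T]$, closing the step. After $n$ steps one reaches $t_0 = 0$ and obtains ${\bf u}^1 = {\bf u}^2$ on $[0,T]$.

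The main---and essentially only---obstacle is the legitimacy of invoking Theorem \ref{P315} on a subinterval not starting at $0$: everything hinges on transporting its conclusion to $[t_{k-1},t_k]$ by time-translation, which is exactly where time-homogeneity of $\sigma^k, b^k$ on each $I_k$ is used (via Remark \ref{RPreliminary} 3.). The remainder is bookkeeping: the weak continuity in $t$ built into Definition \ref{Def} guarantees that the matched nodal values ${\bf u}^i(t_k)$ serve simultaneously as terminal data for $I_k$ and as part of the already-identified solution on $[t_k,T]$, so the induction closes without gaps.
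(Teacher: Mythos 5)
Your proof is correct and follows essentially the same route as the paper: a backward induction over the subintervals $I_n,\ldots,I_1$, on each of which the restricted solutions are viewed (after a time translation) as solutions of a time-homogeneous terminal-value problem to which Theorem \ref{P315} applies. The paper's own proof is terser, only writing out the step on $[t_{n-1},T]$ and invoking backward induction, whereas you additionally make explicit the restriction argument and the nodal matching; these are useful clarifications but not a different method.
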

\begin{proof}
\noindent For each given $k \in [\![1,n]\!]$, we introduce the PDE 
operator $L^k$ defined by 
\begin{equation} \label{OpLk}
L^k := \frac{1}{2}\sum^{d}_{i,j=1}\Sigma^k_{ij}\partial_{ij} +  
\sum^{d}_{i=1}b^k_i\partial_{i}.
\end{equation}
 Let now ${\bf u}^1, {\bf u}^2$ be two solutions  of \eqref{EDPTerm0} with same terminal value $\mu$.

\smallbreak
\noindent The measure-valued functions ${\bf v}^i := {\bf u}^i\left(\cdot + t_{n-1}\right), i \in \{1,2\}$ defined on $[0,T-t_{n-1}]$ are solutions of 
\begin{equation} \label{BackwardFokker_k}
\begin{cases}
\partial_t{\bf v} = \left(L^n\right)^*{\bf v} \\
{\bf v}\left(T-t_{n-1},\cdot\right) = \mu,
\end{cases} 
\end{equation}
\noindent in the sense of Definition \ref{Def} replacing $T$ by $T-t_{n-1}$ and $L$ by $L^n$. Then, Theorem \ref{P315} gives ${\bf v}^1 = {\bf v}^2$ and consequently ${\bf u}^1 = {\bf u}^2$ on $[t_{n-1},T]$. To conclude, we proceed by backward induction.

\end{proof}

\subsection{Uniqueness: the case of Ornstein-Uhlenbeck
 semigroup}

\label{S34}

\smallbreak

\noindent In this section, we consider the case $b := \left(s,x\right) \mapsto C(s)x$ with $C$ continuous from $[0,T]$ to $M_d\left(\R\right)$ and $\sigma$ continuous from $[0,T]$ to $M_d\left(\R\right)$.
Here we perform an analytic approach based on Fourier analysis.

 We recall that $\Sigma := \sigma \sigma^{\top}$.
\begin{comment} 
We also denote by $\shd\left(t\right), \ t \in[0,T],$ the unique solution
of
\begin{equation} \label{ED}
  \shd(t) = I - \int^t_0 C(s)^{\top}\shd(s)ds, \ t \in [0,T].
  \end{equation}
\smallbreak
\noindent We recall that for every $t \in [0,T]$, $\shd(t)$ is invertible
and 
\begin{equation} \label{ED-1}
  \shd^{-1}(t) = I + \int^t_0 \shd^{-1}(s)  C(s)^{\top}ds,  \ t \in [0,T].
  \end{equation}
For previous and similar properties, see Chapter 8 of \cite{bronson}.
\end{comment}
 In that setting, the classical Fokker-Planck PDE \eqref{EDPInitial} for finite measures reads
\begin{equation} \label{FP_OU}
\begin{cases} 
\displaystyle \partial_t {\bf u}\left(t\right) = \frac{1}{2}  \sum^{d}_{i,j=1}\Sigma(t)_{ij}\partial_{ij}{\bf u}\left(t\right) - \sum^{d}_{i=1}\partial_i\left(\left(C(t)x\right)_i{\bf u}\left(t\right)\right) \\
{\bf u}(0) = \nu \in \shm_f\left(\R^d\right).
\end{cases}
\end{equation}
%For the study of uniqueness of \eqref{FP_OU}
% we perform an analytic approach based on Fourier analysis.
In the sequel we will denote by $\shd\left(t\right), \ t \in[0,T],$ the unique solution
of
\begin{equation} \label{ED}
  \shd(t) = I - \int^t_0 C(s)^{\top}\shd(s)ds, \ t \in [0,T].
  \end{equation}
\smallbreak
\noindent We recall that for every $t \in [0,T]$, $\shd(t)$ is invertible
and 
\begin{equation} \label{ED-1}
  \shd^{-1}(t) = I + \int^t_0 \shd^{-1}(s)  C(s)^{\top}ds,  \ t \in [0,T].
  \end{equation}
For previous and similar properties, see Chapter 8 of \cite{bronson}.

\begin{prop} \label{FwdOU_Uniq}
 For all $\nu \in \shm_f\left(\R^d\right)$, the PDE
%\eqref{Fokker}
\eqref{FP_OU}
with initial value $\nu$ admits at most one $\shm_f\left(\R^d\right)$-valued solution. In particular Property \ref{GH1} holds for
$ \shc = \shm_+\left(\R^d\right)$.
\end{prop}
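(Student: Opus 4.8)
The plan is to reduce uniqueness to the injectivity of the Fourier transform on $\shm_f(\R^d)$. Given a solution ${\bf u}$ of \eqref{FP_OU}, set $\hat u(t,\xi):=\mathcal{F}{\bf u}(t)(\xi)=\int_{\R^d}e^{-i\langle\xi,x\rangle}{\bf u}(t)(dx)$, which for each $t$ is a bounded continuous function of $\xi$, hence an element of $\shs'(\R^d)$. Since $\mathcal{F}$ is injective on $\shm_f(\R^d)$, it suffices to show that $t\mapsto\hat u(t,\cdot)$ is entirely determined by $\hat\nu=\mathcal{F}\nu$, and for this I will establish an explicit formula for $\hat u(t,\cdot)$ depending only on $\nu$.

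First I would upgrade the class of admissible test functions in the weak formulation \eqref{weakbis} from $\mathcal{C}^\infty_c(\R^d)$ to the Schwartz class $\shs(\R^d)$. This is the crucial and most delicate point, since the complex exponentials are not compactly supported. The observation that makes the extension possible is that, although the drift $x\mapsto C(s)x$ has linear growth, for $\psi\in\shs(\R^d)$ each term $(C(s)x)_i\partial_i\psi$ still decays rapidly, so that $A_s\psi:=\sum_{i,j}\Sigma_{ij}(s)\partial_{ij}\psi+\sum_i(C(s)x)_i\partial_i\psi$ is bounded and $\int A_s\psi\,{\bf u}(s)(dx)$ is well defined against the finite measure ${\bf u}(s)$. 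The passage from $\mathcal{C}^\infty_c$ to $\shs$ is then obtained by density, approximating $\psi$ by $\psi\,\chi(\cdot/R)$ and checking $A_s(\psi\,\chi(\cdot/R))\to A_s\psi$ suitably, together with a local bound on the total variation of $s\mapsto{\bf u}(s)$ extracted from the integrated identity \eqref{weakbis}.

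With Schwartz test functions available, I would test \eqref{weakbis} against $\psi=\mathcal{F}\Phi$ for arbitrary $\Phi\in\shs(\R^d)$, use Parseval's identity $\int_{\R^d}\hat u(t,\xi)\Phi(\xi)d\xi=\int_{\R^d}\mathcal{F}\Phi(x){\bf u}(t)(dx)$, and exploit the elementary relations $\partial_{ij}e^{-i\langle\xi,x\rangle}=-\xi_i\xi_j e^{-i\langle\xi,x\rangle}$ and $x_k e^{-i\langle\xi,x\rangle}=i\,\partial_{\xi_k}e^{-i\langle\xi,x\rangle}$, the latter giving $\int x_k e^{-i\langle\xi,x\rangle}{\bf u}(s)(dx)=i\,\partial_{\xi_k}\hat u(s,\xi)$ in the distributional sense. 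After an integration by parts in $\xi$ (licit since $\hat u$ is bounded and $\Phi$ is Schwartz, so no moment assumption on ${\bf u}$ is needed), this yields that $\hat u$ solves, in $\shs'(\R^d)$ and in integrated form in $t$, the first order linear transport equation
\[ \partial_t\hat u(t,\xi)=-\langle\Sigma(t)\xi,\xi\rangle\,\hat u(t,\xi)+\langle C(t)^\top\xi,\nabla_\xi\hat u(t,\xi)\rangle,\qquad \hat u(0,\cdot)=\hat\nu. \]

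Finally I would solve this equation by the method of characteristics, realised at the level of the dual test function $\Phi$ so as to stay within the distributional framework. The matrix $\shd(t)$ defined before the statement satisfies $\dot{\shd}(t)=-C(t)^\top\shd(t)$ with $\shd(0)=I$, so the curves $t\mapsto\shd(t)\xi_0$ are exactly the characteristics of the transport term $\langle C(t)^\top\xi,\nabla_\xi\cdot\rangle$. Along such a curve the equation reduces to the scalar linear ODE $\frac{d}{dt}\hat u(t,\shd(t)\xi_0)=-\langle\Sigma(t)\shd(t)\xi_0,\shd(t)\xi_0\rangle\,\hat u(t,\shd(t)\xi_0)$, whose unique solution is
\[ \hat u(t,\shd(t)\xi_0)=\hat\nu(\xi_0)\exp\Big(-\int_0^t\langle\Sigma(s)\shd(s)\xi_0,\shd(s)\xi_0\rangle\,ds\Big). \]
Since $\shd(t)$ is invertible for every $t$, the substitution $\xi_0=\shd(t)^{-1}\xi$ produces an explicit formula for $\hat u(t,\xi)$ in terms of $\hat\nu$ only; as this expression does not involve the particular solution, any two $\shm_f(\R^d)$-valued solutions of \eqref{FP_OU} with the same initial datum $\nu$ share the same Fourier transform and hence coincide. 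The main obstacle is the second step: because a solution is a priori only $\shm_f(\R^d)$-valued, with no control on its moments, one cannot differentiate $\hat u$ classically in $\xi$ nor test directly against exponentials, so the whole computation must be carried out in $\shs'(\R^d)$ and the enlargement of the test-function class justified carefully, using that the linear drift is tempered by the rapid decay of Schwartz derivatives.
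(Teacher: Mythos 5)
Your proposal is correct and follows essentially the same route as the paper's proof: extend the weak formulation to Schwartz test functions, pass to the Fourier transform to get a first-order transport equation in $\shs'(\R^d)$, straighten the characteristics with the resolvent $\shd(t)$ of $-C(t)^{\top}$ (the paper does this via the measure change \eqref{MeasChange} and Lemma \ref{weakDer}, which is exactly your ``characteristics realised on the dual test function''), and conclude from the resulting explicit formula \eqref{FourierExplicitOU} by injectivity of $\shf$ on $\shm_f(\R^d)$. You also correctly single out the same delicate points the paper must handle, namely the linear-growth drift against finite measures and the absence of classical $\xi$-differentiability of $\shf{\bf u}$.
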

\begin{proof} \
\begin{enumerate}
 \item Let $\nu \in \shm_f\left(\R^d\right)$ and ${\bf u}$ be a solution of 
the PDE \eqref{Fokker} with initial value $\nu$. 
Identity \eqref{weakbis} can be extended to $\shs\left(\R^d\right)$ since 
for all $t \in [0,T]$, ${\bf u}\left(t\right)$ belongs to 
$\shm_f\left(\R^d\right)$.
Then, $t \mapsto \shf{\bf u}\left(t\right)$ verifies 
\begin{equation} \label{WeakOUPDE}
\shf{\bf u}\left(t\right)(\xi) = \shf \nu(\xi) + \int^{t}_{0}\left<C\left(s\right)^{\top}\xi,\nabla \shf {\bf u}\left(s\right)\right > ds - \frac{1}{2}\int^{t}_{0}\left<\Sigma\left(s\right)\xi,\xi\right>\shf{\bf u}\left(s\right)ds, \ (t,\xi) \in [0,T]\times\R^d.
\end{equation}
 In fact, the integrand inside the first integral 
has to be understood as a Schwartz distribution:
in particular
 the symbol $\nabla$ is understood in the sense of distributions 
and for each given $s \in [0,T]$, $\left<C\left(s\right)^{\top}\xi,\nabla \shf {\bf u}\left(s\right)\right>$ denotes the tempered distribution
\be
\varphi \mapsto \sum^{d}_{i=1}\partial_i\shf {\bf u}\left(s\right)\left(\xi
 \mapsto \left(C\left(s\right)^{\top}\xi\right)_i\varphi\left(\xi\right)\right).
\ee
Indeed, even though for any $t$, $\shf{\bf u}\left(t\right)$ is a function,
the equation  \eqref{WeakOUPDE} has to be understood in
$\shs'\left(\R^d\right)$.
Hence, for all $\phi \in \shs\left(\R^d\right)$, this gives
\begin{align}
  \int_{\R^d}\phi\left(\xi\right)\shf{\bf u}\left(t\right)\left(\xi\right)d\xi &{}- \int_{\R^d}\phi\left(\xi\right)\shf\nu\left(\xi\right)
\phi(\xi)
d\xi\\
  &{}=  - i \sum^{d}_{k,l=1}\int^{t}_{0} C\left(s\right)_{kl} \int_{\R^d}\xi_l\shf\phi_k\left(\xi\right){\bf u}\left(s\right)\left(d\xi\right) ds -
    \frac{1}{2}\int^{t}_{0}\int_{\R^d}\left<\Sigma\left(s\right)\xi,\xi\right>\shf{\bf u}\left(s\right)\left(\xi\right) \phi(\xi) d\xi ds
\nonumber
  \\
                                   &{}= - \sum^{d}_{k,l=1}\int^{t}_{0}    C\left(s\right)_{kl}   \int_{\R^d}
     \shf\left(\partial_l\phi_k\right)\left(\xi\right){\bf u}\left(s\right)
                                     \left(d\xi\right)ds - \frac{1}{2}\int^{t}_{0}\int_{\R^d}\left<\Sigma\left(s\right)\xi,\xi\right>\shf{\bf u}\left(s\right)\left(\xi\right)d\xi ds
                                     \nonumber \\
  &{}= - \int^{t}_{0}\int_{\R^d}\left(\mathop{div_\xi}\left(C\left(s\right)^{\top}\xi\phi\left(\xi\right)\right) +
    \frac{1}{2}\left<\Sigma\left(s\right)\xi,\xi\right>
    \phi\left(\xi\right)\right)\shf{\bf u}(s)(\xi) d\xi ds,
                                                                                 \nonumber
\end{align}
 where $\phi_k : \xi \mapsto \xi_k\phi\left(\xi\right)$ for a given $k \in [\![1,d]\!]$.

\smallbreak
\noindent \item Let now ${\bf v}: [0,T] \rightarrow
\shm_f\left(\R^d\right)$
be defined by 
\begin{equation} \label{MeasChange}
\int_{\R^d}\phi\left(x\right){\bf v}\left(t\right)\left(dx\right) = \int_{\R^d}\phi\left(\shd\left(t\right)^{\top}x\right){\bf u}\left(t\right)\left(dx\right),
\end{equation}
 $t \in [0,T], \phi \in \shc_b(\R^d).$
For every $\xi \in \R^d$, we set $\phi(x) = \exp(-i \langle \xi, x\rangle)$
in \eqref{MeasChange} to obtain
\begin{equation} \label{EAcont}
\shf{\bf v}\left(t\right)\left(\xi\right) =  \shf{\bf u}\left(t\right)\left(\shd\left(t\right)\xi\right),
\end{equation}
 for all $\xi \in \R^d$, for all $t \in [0,T]$.

\noindent \item
 We want now to show that, for each $\xi$, 
$t \mapsto \shf{\bf v}\left(t\right)$ fulfills an ODE.
% in $\shs'\left(\R^d\right)$. 
To achieve this, suppose for a moment that $ \left(t,\xi\right) \mapsto \shf{\bf u}\left(t\right)\left(\xi\right)$ is differentiable with respect to the variable $\xi$. Then, on the one hand, for all $\left(t,\xi\right)\in[0,T]\times\R^d$,
we have
\begin{equation}\label{strongPDE}
\shf{\bf u}\left(t\right)\left(\xi\right) = \shf\nu\left(\xi\right) + \int^{t}_{0}\left<C\left(s\right)^{\top}\xi,\nabla_\xi\shf {\bf u}\left(s\right)\left(\xi\right)\right>ds - \frac{1}{2}\int^{t}_{0}\left<\Sigma\left(s\right)\xi,\xi\right>\shf{\bf u}\left(s\right)\left(\xi\right)ds,
\end{equation}
 thanks to identity \eqref{WeakOUPDE}. This means in particular that, for each given $\xi \in \R^d$, $t \mapsto \shf{\bf u}\left(t\right)\left(\xi\right)$ is differentiable almost everywhere on $[0,T]$.
\smallbreak

\noindent On the other hand, 
 for almost every $t \in [0,T]$ and all $\xi \in \R^d$, we have
\begin{align} \label{ETechnical}
\partial_t\shf {\bf v}\left(t\right)\left(\xi\right)&{}=\partial_t\shf {\bf u}\left(t\right)\left(\shd\left(t\right)\xi\right) + \sum^{d}_{i=1}\left(\frac{d}{dt}\left(\shd\left(t\right)\xi\right)\right)_i    \partial_i\shf {\bf u}\left(t\right)\left(\shd\left(t\right)\xi\right), \nonumber \\
&{}= \partial_t\shf {\bf u}\left(t\right)\left(\shd\left(t\right)\xi\right) - \sum^{d}_{i=1}\left(C\left(t\right)^{\top}\shd\left(t\right)\xi\right)_i\partial_i\shf {\bf u}\left(t\right)\left(\shd\left(t\right)\xi\right), \nonumber\\
&{}= - \frac{1}{2}\left<\Sigma\left(t\right)\shd\left(t\right)\xi,\shd\left(t\right)\xi\right>\shf{\bf v}\left(t\right)\left(\xi\right), 
\end{align}
where  from line 1 to line 2, 
   we have used the fact $\frac{d}{dt}\left(\shd\left(t\right)\xi\right) =
 - C\left(t\right)^{\top}\shd\left(t\right)\xi$ for all $\left(t,\xi\right) \in [0,T]\times\R^d$  and from line 2 to line 3, 
the identity \eqref{strongPDE}.
Since $t \mapsto  \shf {\bf v}\left(t\right)\left(\xi\right)$
is absolutely continuous by \eqref{EAcont}, \eqref{ETechnical}
  implies
\begin{equation}\label{EDOFourierFwd}
\shf {\bf v}\left(t\right)\left(\xi\right) = \shf \nu \left(\xi\right) - \frac{1}{2}\int^{t}_{0}\left<\Sigma\left(s\right)\shd\left(s\right)\xi,\shd\left(s\right)\xi\right>\shf {\bf v}\left(s\right)\left(\xi\right)ds,
 \xi \in \R^d,
\end{equation}
for all $t \in [0,T]$.

\noindent \item Now, if $\left(t,\xi\right) \mapsto \shf {\bf u}\left(t\right)\left(\xi\right)$ is not
necessarily differentiable in the variable $\xi$, 
we will be able to prove
\eqref{EDOFourierFwd} still holds by making use of calculus in the
sense of distributions.
% as an equality in $\shs'\left(\R^d\right)$
%$\xi$-a.e.

\noindent \item Suppose that \eqref{EDOFourierFwd} holds.
This gives
\begin{equation}\label{FourierExplicitOU}
\shf{\bf u}\left(t\right)\left(\xi\right) = e^{-\int^{t}_{0}\frac{\left|\sigma\left(s\right)^{\top}\xi\right|^2}{2}ds}\shf \nu\left(\shd^{-1}\left(t\right)\xi\right)
\end{equation}
and so ${\bf u}$ is completely determined.
\noindent \item The proof is now concluded after we have established 
%technical lemma below.
 \eqref{EDOFourierFwd}. Since both sides of it are continuous in $(t,\xi)$,
it will be enough to show the equality as $\shs'(\R^d)$-valued.
This can be done differentiating \eqref{WeakOUPDE}, considered
as an equality in $\shs'(\R^d)$.
% applying  Lemma \ref{weakDer} below to
% 
For this we will apply  Lemma \ref{weakDer} below
setting $\Phi := \shf {\bf u}\left(t\right)$
for every fixed
 $t \in [0,T]$ and differentiating in time.
We set $\Phi_t(\xi) = \shf {\bf v}(t)(\xi), \ \xi \in \R^d$
% \eqref{WeakOUPDE}
and $\Phi_t(\varphi) = \int_{\R^d} \varphi(\xi) \Phi_t(\xi) {\mathrm d}\xi,
\varphi \in \shs(\R^d)$.
%We remark that $\Phi_t$ is compatible
%with the one defined in
 %\eqref{EPhi}. 
\eqref{EDOFourierFwd} follows from Lemma \ref{weakDer} below
remarking that $\Phi_t$ is compatible
with the one defined in
 \eqref{EPhi}. 
\end{enumerate}
% is associated
% with $\shf{\bf v}(t)$ defined in \eqref{EAcont}.

%  Since $\xi \mapsto \shf {\bf v}(t)(\xi)$ is a continuous function, both sides of \eqref{EDOFourierFwd}
%  being continuous in $\xi$, then \eqref{EDOFourierFwd}
%  can also be considered as an ODE for each $\xi$, so that
% for all $t \in [0,T]$, $\xi \in \R^d$,
\end{proof}
%%%%% LUCAS. VOIR SI LA TRANSFORMEE DE FOURIER EST CELLE D'UNE GAUSSIENNE
\begin{lem}\label{weakDer}
  \noindent Let $\Phi \in \shs^{'}\left(\R^d\right), t \in [0,T]$.
 We denote by $\Phi_t$ the element of $\shs^{'}\left(\R^d\right)$ such that for all $\varphi \in \shs\left(\R^d\right)$
\begin{equation}\label{EPhi}
\Phi_t\left(\varphi\right) := \det\left(\shd^{-1}\left(t\right)\right) \Phi\left(\varphi\left(\shd^{-1}\left(t\right)\cdot\right)\right).
\end{equation}
Then, for all $t \in [0,T]$
\begin{equation}\label{EDerivS}
\Phi_t(\varphi)  = \Phi(\varphi) - \sum^{d}_{i=1}\int^{t}_{0}\left(\partial_i\Phi\right)_s\left(x \mapsto \left(C\left(s\right)^{\top}\shd\left(s\right)x\right)_i\varphi(x)\right)ds.
\end{equation}
\end{lem}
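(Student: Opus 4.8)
The plan is to reduce everything to the elementary case where $\Phi$ is an actual Schwartz function and then to recover the general tempered distribution by a weak-$*$ density argument. The crucial observation is that the operation $\Phi \mapsto \Phi_t$ defined in \eqref{EPhi} is nothing but precomposition with the linear isomorphism $\shd(t)$, the determinant factor being exactly the Jacobian correction that makes this consistent at the distributional level. Indeed, if $\Phi$ is identified with a function, the change of variables $x = \shd(t)y$ in \eqref{EPhi} (legitimate since $\shd(t)$ is invertible and $\det\shd(t) > 0$, because $\det\shd(0) = 1$ and $t \mapsto \det\shd(t)$ is continuous and never vanishes) yields $\Phi_t(y) = \Phi(\shd(t)y)$ and, applied to $\partial_i\Phi$ in place of $\Phi$, also $(\partial_i\Phi)_t(y) = \partial_i\Phi(\shd(t)y)$. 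I would fix $\varphi \in \shs(\R^d)$ throughout and compare the two sides of \eqref{EDerivS} as functions of $t$.

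For $\Phi$ a function I would then simply differentiate in $t$: using only the differential equation $\shd'(t) = -C(t)^\top\shd(t)$ (no information on $\frac{d}{dt}\shd^{-1}$ being needed), one gets
\[ \frac{d}{dt}\Phi(\shd(t)y) = \sum_{i=1}^d \partial_i\Phi(\shd(t)y)\,(\shd'(t)y)_i = -\sum_{i=1}^d (\partial_i\Phi)_t(y)\,(C(t)^\top\shd(t)y)_i. \]
Pairing with $\varphi$, integrating from $0$ to $t$ and using $\shd(0)=I$, which gives $\Phi_0 = \Phi$, produces exactly \eqref{EDerivS} in the function case. It is worth stressing that the clean form of the right-hand side is precisely due to the cancellation of the Jacobian factor noted above: differentiating the composition $\Phi\circ\shd(t)$ produces only the gradient term, and no extra divergence contribution coming from the determinant survives.

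It remains to remove the regularity assumption on $\Phi$. I would pick functions $\Phi^n \in \shs(\R^d)$ with $\Phi^n \to \Phi$ in $\shs'(\R^d)$ for the weak-$*$ topology, write \eqref{EDerivS} for each $\Phi^n$, and let $n \to \infty$. The two non-integral terms pass to the limit at once, since for fixed $t$ both $\varphi(\shd^{-1}(t)\cdot)$ and $\varphi$ are fixed Schwartz test functions. For the $s$-integral, unfolding the definitions gives $(\partial_i\Phi^n)_s(\eta) = -\det(\shd^{-1}(s))\,\Phi^n\!\left(\partial_i[\eta(\shd^{-1}(s)\cdot)]\right)$ with $\eta(x) = (C(s)^\top\shd(s)x)_i\varphi(x)$, so for each fixed $s$ the integrand converges to its counterpart written with $\Phi$. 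The main obstacle is to justify passing the limit under the integral sign: here I would invoke the Banach--Steinhaus theorem, which guarantees that the weak-$*$ convergent sequence $(\Phi^n)$ is equicontinuous, hence dominated by a single Schwartz seminorm uniformly in $n$. Combined with the fact that $s \mapsto \shd(s),\shd^{-1}(s),C(s)$ are continuous, hence bounded, on the compact interval $[0,T]$ (so that the relevant test functions $\partial_i[\eta(\shd^{-1}(s)\cdot)]$ have Schwartz seminorms bounded uniformly in $s$), this bounds the integrands uniformly in $n$ and $s \in [0,T]$ by an integrable constant. Dominated convergence then applies and delivers \eqref{EDerivS} for arbitrary $\Phi \in \shs'(\R^d)$.
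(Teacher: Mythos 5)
Your proof is correct and follows essentially the same route as the paper: reduce to the case where $\Phi$ is a smooth function, where \eqref{EDerivS} follows from the chain rule and the ODE $\shd'(t)=-C(t)^{\top}\shd(t)$, and then pass to a general $\Phi\in\shs'(\R^d)$ by approximation, justifying the interchange of limit and $ds$-integral via a seminorm bound that is uniform in the approximation parameter and in $s\in[0,T]$. The only (harmless) difference is technical: the paper approximates $\Phi$ by mollification $\Phi*\phi_\epsilon$ and obtains the uniform domination from the finite order of the fixed distribution $\partial_i\Phi$ tested against the convolved test functions, whereas you take an arbitrary weak-$*$ approximating sequence in $\shs(\R^d)$ and get the uniform bound from Banach--Steinhaus equicontinuity; both are valid.
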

\begin{proof}
% \noindent Since the right hand-side
%of \eqref{EDerivS}
%  is continuous in time, it is enough
 % to show the identity \eqref{EDerivS} itself.

  \noindent %Fix $t \in [0,T]$.
  We begin with the case $\Phi \in \shs\left(\R^d\right)$ 
(or only $\shc^\infty\left(\R^d\right)$). In this case,
 % we have for all $x \in \R^d$
\be
\Phi_t\left(x\right) = \Phi\left(\shd\left(t\right)x\right),
\ x \in \R^d, t \in [0,T].
\ee
Hence, for every $t \in [0,T]$
\begin{align*}
\frac{d}{dt}\Phi_t\left(x\right) &{}= \left<\frac{d}{dt}\left(\shd\left(t\right)x\right),\nabla\Phi\left(\shd\left(t\right)x\right)\right> \\
&{}= - \left<C\left(t\right)^{\top}\shd\left(t\right)x,\nabla\Phi\left(\shd\left(t\right)x\right)\right>\\
&{}= - \sum^{d}_{i=1}\left(C\left(t\right)^{\top}\shd\left(t\right)x\right)_i\left(\partial_i\Phi\right)_t\left(x\right),  
\end{align*}
 Now, coming back to the general case, let $\Phi \in \shs'\left(\R^d\right) $ and $\left(\phi_\epsilon\right)_{\epsilon > 0}$ be a sequence of mollifiers in $\shs\left(\R^d\right)$, converging to the Dirac measure.
 Then for all $\epsilon > 0$, the function $\Phi*\phi_\epsilon : x \mapsto \Phi\left(\phi_\epsilon\left(x-\cdot\right)\right)$ belongs to $\shs'\left(\R^d\right)\cap\shc^\infty\left(\R^d\right)$. By the first part of the proof,
 \eqref{EDerivS} holds replacing  $\Phi$ with $ \Phi * \varphi_\varepsilon$.
Now, this converges to $\Phi$ in $\shs'\left(\R^d\right)$ when $\epsilon$ tends to $0+$. 
\eqref{EDerivS}  follows sending $\epsilon$ to $0+$.
Indeed, for all $\varphi \in \shs\left(\R^d\right)$, $t \in [0,T]$, setting $\check{\phi_\epsilon} : y \mapsto \phi_\epsilon(-y)$, we have
\begin{align*}
\Phi_t\left(\varphi\right) &{}= \lim\limits_{\epsilon \to 0+}\int_{\R^d}\varphi(x)\left(\Phi*\phi_\epsilon\right)_t\left(x\right)dx\\ 
&{}= \lim\limits_{\epsilon \to 0+}\int_{\R^d}\varphi(x)\Phi*\phi_\epsilon\left(x\right)dx -  \lim\limits_{\epsilon \to 0+}\sum^{d}_{i=1}\int^{t}_{0}\det\left(\shd^{-1}\left(s\right)\right)\int_{\R^d}\left(C\left(s\right)^{\top}x\right)_i\varphi\left(\shd^{-1}\left(s\right)x\right)\partial_i\Phi*\phi_\epsilon(x)dxds\\ 
&{}= \Phi(\varphi) - \lim\limits_{\epsilon \to 0+}\sum^{d}_{i=1}\int^{t}_{0}\det\left(\shd^{-1}\left(s\right)\right)\partial_i\Phi\left(\left(\left(C\left(s\right)^{\top}\cdot\right)_i\varphi\left(\shd^{-1}\left(s\right)\cdot\right)\right)*\check{\phi_\epsilon}\right)ds\\
&{}= \Phi(\varphi) - \sum^{d}_{i=1}\int^{t}_{0}\det\left(\shd^{-1}\left(s\right)\right)\partial_i\Phi\left(\left(C\left(s\right)^{\top}\cdot\right)_i\varphi\left(\shd^{-1}\left(s\right)\cdot\right)\right)ds\\
&{}= \Phi(\varphi) - \sum^{d}_{i=1}\int^{t}_{0}\left(\partial_i\Phi\right)_s\left(x \mapsto \left(C\left(s\right)^{\top}\shd\left(s\right)x\right)_i\varphi\left(x\right)\right)ds.\\
\end{align*}
 To conclude, it remains to justify the commutation between the limit in $\epsilon$ and the integral in time from line 3 to line 4 using Lebesgue's dominated convergence theorem. On the one hand, for a given $i \in [\![1,d]\!]$, the fact $\partial_i \Phi$ belongs to $\shs'\left(\R^d\right)$ implies that there exists $C > 0$, $N \in \N$ such that for all $\varphi \in \shs\left(\R^d\right)$
\be
\left|\partial_i \Phi\left(\varphi\right)\right| \leq C \sup_{\left|\alpha\right| \leq N}\sup_{x\in\R^d}\left(1 + \left|x\right|^2\right)^N\left|\partial^{\alpha}_x\varphi(x)\right|,
\ee 
 see Chapter 1, Exercise 8 in \cite{rudin}. On the other hand, the quantities
\be 
\sup_{x\in\R^d}\left(1 + \left|x\right|^2\right)^N\left|\partial^{\alpha}_x\left(x_j\varphi(\shd^{-1}\left(s\right)\cdot)\right)*\check{\phi_\epsilon}\right|
\ee
 are bounded uniformly in the couple $\left(s,\epsilon\right)$, for all $j \in [\![1,d]\!]$, $\alpha \in \mathbb{N}^d,$
taking also into account that the function $s \mapsto \shd^{-1}(s)$
is continuous and therefore bounded.
Since $C$ is also  continuous on $[0,T]$, we are justified to
use Lebesgue's dominated convergence theorem.

%\be
%\sup_{x \in \R^d} \left|x^{\alpha}\partial^\beta_x\left(x_j\varphi(\shc^{-1}\left(s\right)^{\top}\cdot)\right)*\tilde{\phi_\epsilon}\right|
%\ee

\end{proof}
We state now the main result of this section.

\begin{thm}\label{BwdOU_Uniq} 
({\bf Uniqueness: the case of OU semigroup}).

 For all $\mu \in \shm_f\left(\R^d\right)$, the PDE \eqref{EDPTerm0} with terminal value $\mu$ admits at most one $\shm_f\left(\R^d\right)$-valued solution in the sense of Definition \ref{Def}.
\end{thm}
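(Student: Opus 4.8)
The plan is to deduce the statement from the forward uniqueness already obtained in Proposition \ref{FwdOU_Uniq}, combined with the abstract equivalence recorded in Proposition \ref{P2}. First I would check the hypothesis of Proposition \ref{P2}, namely that for every $\nu \in \shm_f\left(\R^d\right)$ the forward Fokker-Planck problem \eqref{FP_OU} admits a \emph{unique} $\shm_f\left(\R^d\right)$-valued solution ${\bf u}^\nu$. Uniqueness is precisely Proposition \ref{FwdOU_Uniq}. For existence, since the coefficients $b(s,x)=C(s)x$ and $\sigma(s)$ are affine with linear growth, the SDE \eqref{eq:X} admits a (strong) solution for any initial law; writing $\nu=\nu^+-\nu^-$ through its Jordan decomposition and applying Proposition \ref{PFundam} to each of the two nonnegative parts, the linearity of the equation yields a solution with initial datum $\nu$. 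With both existence and uniqueness secured, Proposition \ref{P2} reduces the whole theorem to showing that the map $\nu \mapsto {\bf u}^\nu(T)$ is injective on $\shm_f\left(\R^d\right)$.

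The injectivity then follows immediately from the explicit representation of the Fourier transform of ${\bf u}^\nu$ established inside the proof of Proposition \ref{FwdOU_Uniq}. Indeed, by \eqref{FourierExplicitOU} we have, for every $\xi \in \R^d$, that $\shf {\bf u}^\nu(T)(\xi)=\exp\left(-\tfrac{1}{2}\int_0^T |\sigma(s)^\top \xi|^2\,ds\right)\shf\nu\left(\shd^{-1}(T)\xi\right)$. Suppose ${\bf u}^\nu(T)={\bf u}^{\nu'}(T)$; taking Fourier transforms and using that the scalar prefactor $\exp\left(-\tfrac{1}{2}\int_0^T |\sigma(s)^\top\xi|^2\,ds\right)$ is strictly positive for each $\xi$, we may divide it out to obtain $\shf\nu\left(\shd^{-1}(T)\xi\right)=\shf\nu'\left(\shd^{-1}(T)\xi\right)$ for all $\xi \in \R^d$. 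Since $\shd(T)$, and hence $\shd^{-1}(T)$, is invertible, the vector $\shd^{-1}(T)\xi$ ranges over all of $\R^d$ as $\xi$ does, whence $\shf\nu=\shf\nu'$ on $\R^d$. By injectivity of the Fourier transform on finite signed measures we conclude $\nu=\nu'$, which is exactly the required injectivity.

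The conceptual work has essentially already been carried out in Proposition \ref{FwdOU_Uniq}, so I do not anticipate a genuine obstacle here: once the explicit formula \eqref{FourierExplicitOU} is available, the argument is purely algebraic, resting only on the positivity of a Gaussian-type factor and the invertibility of $\shd(T)$. The single point deserving a little care is the verification of the \emph{existence} half of the hypothesis of Proposition \ref{P2} for general, possibly signed, finite measures; this is what the Jordan decomposition together with the linearity of \eqref{FP_OU} is for. Alternatively, one could bypass Proposition \ref{PFundam} entirely and check directly that the measure whose Fourier transform is prescribed by the right-hand side of \eqref{FourierExplicitOU} is a well-defined element of $\shm_f\left(\R^d\right)$ solving the forward equation, which would make the existence step self-contained.
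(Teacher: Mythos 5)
Your proposal is correct, and its computational core is exactly the paper's: everything hinges on the explicit Fourier representation \eqref{FourierExplicitOU}, $\shf {\bf u}^\nu(T)(\xi)=e^{-\frac{1}{2}\int_0^T|\sigma(s)^\top\xi|^2ds}\,\shf\nu\bigl(\shd^{-1}(T)\xi\bigr)$, together with the strict positivity of the exponential prefactor and the invertibility of $\shd(T)$. The difference is purely structural. The paper argues directly: any backward solution ${\bf u}$ with terminal value $\mu$ is in particular a forward solution started from ${\bf u}(0)$, so \eqref{FourierExplicitOU} can be inverted to give $\shf{\bf u}(0)(\xi)=e^{\frac{1}{2}\int_0^T|\sigma(s)^\top\xi|^2ds}\,\shf\mu\bigl(\shd(T)\xi\bigr)$; hence ${\bf u}(0)$ is determined by $\mu$, and Proposition \ref{FwdOU_Uniq} finishes. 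You instead route through the abstract equivalence of Proposition \ref{P2}, which obliges you to verify the \emph{existence} of a forward solution for every signed $\nu\in\shm_f(\R^d)$ — a hypothesis the paper never needs, since it only ever works with the forward solution that the given backward solution itself provides. Your Jordan-decomposition-plus-linearity argument for that existence step is sound (the linear SDE has solutions for any initial law, Proposition \ref{PFundam} applies to each nonnegative part, and \eqref{weakbis} is linear in ${\bf u}$), so nothing is wrong; it is simply an extra verification that the direct argument renders unnecessary. What your packaging buys in exchange is a cleanly quotable statement — injectivity of $\nu\mapsto{\bf u}^\nu(T)$ on all of $\shm_f(\R^d)$ — which is marginally more information than the theorem itself.
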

\begin{proof}
\noindent Let $\mu \in \shm_f\left(\R^d\right)$ and ${\bf u}$ be a solution of \eqref{EDPTerm0} with terminal value $\mu$. Then, ${\bf u}$ solves the PDE
 \eqref{Fokker} with initial value ${\bf u}\left(0\right)$. As a consequence, 
by
  \eqref{FourierExplicitOU} appearing at the end of the proof of Proposition \ref{FwdOU_Uniq}, for all $\xi \in \R^d$,
\be 
\shf \mu \left(\xi\right) = e^{-\int^{T}_{0}\frac{\left|\sigma\left(s\right)^{\top}\xi\right|^2}{2} ds}\shf {\bf u}\left(0\right)\left(\shd^{-1}\left(T\right)\xi\right),
\ee
so that 
\be 
\shf {\bf u}\left(0\right)(\xi) = 
 e^{\int^{T}_{0}\frac{\left|\sigma\left(s\right)^{\top}\xi\right|^2}{2}ds}\shf \mu \left(\shd\left(T\right)\xi\right).
\ee
 Hence, ${\bf u}\left(0\right)$ is entirely determined by $\mu$ and Proposition \ref{FwdOU_Uniq} gives the result.
\end{proof}

\section{McKean SDE related to the PDE with terminal condition}
\label{S4}

\setcounter{equation}{0}

\noindent In this section, we concentrate on the   
 analysis of the well-posedness of the
McKean SDE \eqref{MKIntro} that we relate to the 
PDE \eqref{EDPTerm0}.
The existence results for the SDE \eqref{MKIntro}  will be based on two 
pillars: the reachability condition constituted by the existence of a solution
of the Fokker-Planck PDE with terminal condition  and
 the time-reversal techniques of \cite{haussmann_pardoux}. They follow from general 
statements of Section \ref{MKEX}, in the Appendix.
 The uniqueness results for 
 the SDE \eqref{MKIntro}
will be a consequence of results stated in 
Section \ref{MKUNIQ}.

\subsection{Preliminary considerations} \label{Prelim}

\smallbreak
\noindent Regarding $b: [0,T]\times\mathbb{R}^d \mapsto \mathbb{R}^d$, 
$\sigma: [0,T]\times\mathbb{R}^d \mapsto M_d\left(\mathbb{R}\right)$, we
set 
%$\Sigma := \sigma\sigma^{\top}$, 
$\widehat{b} := b\left(T-.,\cdot\right)$, $\widehat{\sigma} := \sigma\left(T-.,\cdot\right)$,
$\widehat {\Sigma}:= \widehat{\sigma}^\top \widehat{\sigma}.$
\smallbreak

\noindent Given a probability-valued function ${\bf p}: [0,T] \rightarrow 
\shp(\R^d)$,
 we denote by $p_t$ the density of ${\bf p}\left(t\right)$, for $t \in [0,T]$,
 whenever it exists.
 In this section $\mu$ will denote the terminal condition
 of the PDE \eqref{EDPTerm0} supposed to be a probability.
 For the McKean type SDE \eqref{MKIntro}, remarking that $\mu = \bar \mu$,
 we consider the following notion of solution.
\begin{defi} 
\label{MKSol}
  On a given filtered probability space $\left(\Omega, \shf, \left(\mathcal{F}_t\right)_{t\in[0,T]}, \P\right)$
  equipped with an $d$-dimensional $\left(\mathcal{F}_t\right)_{t\in[0,T]}$-Brownian motion $\beta$,
  a  {\bf solution} of the SDE \eqref{MKIntro} is a
  couple $\left(Y,{\bf p}\right)$
  fulfilling \eqref{MKIntro}, 
  %with Brownian motion $\beta$,
  such that
 $Y$ is $\left(\shf_t\right)_{t\in[0,T]}$-adapted 
 and such that for all $i \in [\![1,d]\!]$, all compact $K \subset \R^d$, all $\tau < T$
 \begin{equation} \label{IdInt}
 \int^{\tau}_{0}\int_{K}\left| \mathop{div_y}\left(\widehat{\Sigma}_{i.}\left(r,y\right)p_{r}\left(y\right)\right) \right|   dydr <  \infty.
 \end{equation}
\end{defi} 
\begin{rem} \label{RDefMK}
  For a given solution $\left(Y,{\bf p}\right)$ of equation \eqref{MKIntro},
  identity \eqref{IdInt} appearing in Definition \ref{MKSol} implies
  in particular
  that, for all $i \in [\![1,d]\!]$, all $\tau < T$
\be 
\E \left( \int^{\tau}_{0} \left|\frac{\mathop{div_y}\left(\widehat{\Sigma}_{i.}\left(r,Y_r\right)p_{r}\left(Y_r\right)\right)}{p_{r}\left(Y_r\right)}\right|dr \right)
< \infty.
%\ \mathbb{P}\rm{-a.s.}
\ee 
\end{rem}
\noindent The terminology stating that the SDE \eqref{MKIntro} constitutes a probabilistic representation
of the PDE \eqref{EDPTerm0}
 is justified by the result below.
\begin{prop} \label{PProbRep} Suppose $b,\sigma$ locally bounded. If $\left(Y,{\bf p}\right)$
is a solution of \eqref{MKIntro} in the sense of Definition \ref{MKSol}, then ${\bf p}\left(T-\cdot\right)$ is a solution
of \eqref{EDPTerm0}, with $\mu = {\bf p}(0)$ in the sense of Definition \ref{Def}.
\end{prop}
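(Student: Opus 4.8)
The plan is to verify directly that $t \mapsto {\bf p}(T-t)$ satisfies the weak formulation \eqref{weak} of the terminal-value PDE \eqref{EDPTerm0}. Fix a test function $\varphi \in \mathcal{C}^\infty_c(\R^d)$. The strategy is to apply It\^o's formula to $\varphi(Y_t)$, where $(Y,{\bf p})$ solves the McKean SDE \eqref{MKIntro}, take expectations, and then re-express the result in terms of the time-reversed marginals. Writing the dynamics of $Y$ as $dY_t = \widehat{b}_{\mathrm{eff}}(t,Y_t)\,dt + \widehat{\sigma}(t,Y_t)\,d\beta_t$ with effective drift $-\widehat{b}(t,Y_t) + \left\{\frac{\mathop{div_y}(\widehat{\Sigma}_{i.}(t,Y_t)p_t(Y_t))}{p_t(Y_t)}\right\}_i$, It\^o's formula gives
\begin{equation*}
\varphi(Y_t) = \varphi(Y_0) + \int_0^t \langle \nabla\varphi(Y_r), \widehat{b}_{\mathrm{eff}}(r,Y_r)\rangle\,dr + \frac{1}{2}\int_0^t \sum_{i,j}\widehat{\Sigma}_{ij}(r,Y_r)\partial_{ij}\varphi(Y_r)\,dr + M_t,
\end{equation*}
where $M$ is the local martingale coming from the stochastic integral.

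First I would take expectations, which requires $M$ to be a true martingale (or at least to have zero expectation up to time $\tau < T$); this is where the integrability condition \eqref{IdInt} from Definition \ref{MKSol}, together with Remark \ref{RDefMK}, enters to control the drift term, while the martingale part is handled by the boundedness of $\nabla\varphi$ and local boundedness of $\sigma$. After taking expectations and recalling ${\bf p}(r) = \mathcal{L}(Y_r)$, the drift contribution involving $\frac{\mathop{div_y}(\widehat{\Sigma}_{i.}(r,\cdot)p_r)}{p_r}$ integrated against $p_r$ collapses: the density $p_r$ in the denominator cancels against the law, so that
\begin{equation*}
\E\!\left(\partial_i\varphi(Y_r)\,\frac{\mathop{div_y}(\widehat{\Sigma}_{i.}(r,Y_r)p_r(Y_r))}{p_r(Y_r)}\right) = \int_{\R^d}\partial_i\varphi(y)\,\mathop{div_y}(\widehat{\Sigma}_{i.}(r,y)p_r(y))\,dy.
\end{equation*}
An integration by parts then converts this into a term of the form $-\int \frac{1}{2}\sum_{ij}\widehat{\Sigma}_{ij}\partial_{ij}\varphi\,p_r\,dy$ plus the second-order piece already present, and crucially it produces the correct divergence-form structure matching $L^*$. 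Collecting all the terms and identifying $\widehat{b}, \widehat{\Sigma}$ with $b(T-r,\cdot), \Sigma(T-r,\cdot)$, I would arrive at an identity expressing $\int \varphi\,d{\bf p}(t)$ against $\int_0^t \int L_{T-r}\varphi\,d{\bf p}(r)\,dr$.

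The final step is the change of variables $t \mapsto T-t$. Setting ${\bf w}(t) := {\bf p}(T-t)$ and substituting $s = T-r$ in the time integral, the accumulated identity becomes exactly \eqref{weak}: namely $\int \varphi\,d{\bf w}(t) = \int \varphi\,d\mu - \int_t^T \int L_s\varphi\,d{\bf w}(s)\,ds$ with $\mu = {\bf w}(T) = {\bf p}(0)$ and ${\bf w}(T) = {\bf p}(0)$, using that $Y_0 \sim {\bf p}(0)$ provides the terminal datum. I expect the main obstacle to be the justification of the integration by parts and the cancellation of $p_r$ rigorously under only the local integrability hypothesis \eqref{IdInt}: one must argue on $[0,\tau]$ with $\tau < T$ and pass to the limit $\tau \uparrow T$, and confirm that the divergence expression $\mathop{div_y}(\widehat{\Sigma}_{i.}(r,\cdot)p_r)$ is a genuine (signed, locally integrable) function against which $\partial_i\varphi$ can be integrated — precisely the content guaranteed by Definition \ref{MKSol}. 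The martingale property and the Fubini interchange between the time integral and the expectation are comparatively routine given these integrability bounds.
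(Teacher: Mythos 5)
Your proposal is correct and follows essentially the same route as the paper's proof: It\^o's formula applied to $\varphi(Y)$, expectation justified via the integrability condition \eqref{IdInt} (Remark \ref{RDefMK}) for the singular drift and local boundedness of $\sigma$ for the martingale and second-order terms, cancellation of $p_r$ against the law of $Y_r$ followed by integration by parts to recover the $-\frac12 Tr(\widehat{\Sigma}\nabla^2\varphi)$ structure, and finally the change of variables $t\mapsto T-t$. The only cosmetic difference is that the paper applies It\^o directly at time $T-t$ rather than at $t$ with a substitution afterwards.
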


\begin{proof} 
\noindent Let $\left(Y,{\bf p}\right)$
be a solution of \eqref{MKIntro} in the sense of Definition \ref{MKSol} with a Brownian motion symbolized by $\beta$. Let $\phi \in \mathcal{C}^{\infty}_c\left(\mathbb{R}^d\right)$
 and $t\in]0,T]$. It\^o's formula gives
\begin{equation} \label{Ito}
\phi\left(Y_{T-t}\right) = \phi\left(Y_0\right) + \int^{T-t}_{0}\left<\tilde{b}(s,Y_s; p_s),\nabla\phi\left(Y_s\right)\right> + \frac{1}{2}Tr\left(\widehat{\Sigma}\left(s,Y_s\right)\nabla^2\phi\left(Y_s\right)\right)ds + \int^{T-t}_{0}\nabla\phi\left(Y_s\right)^\top\sigma\left(s,Y_s\right)d\beta_s,
\end{equation}
 with 
\be
\tilde{b}\left(s,y; p_s\right) := \left\{\frac{\mathop{div_y}\left(\widehat{\Sigma}_{j.}\left(s,y\right)p_{s}\left(y\right)\right)}{p_{s}\left(y\right)}\right\}_{j\in[\![1,d]\!]} - \widehat{b}\left(s,y\right), \quad \left(s,y\right) \in ]0,T[\times\R^d.
\ee
We now want to take the expectation in identity \eqref{Ito}.
On the one hand,
%\eqref{IdInt} in
% Definition \ref{MKSol}
Remark \ref{RDefMK}, implies that, for all $i \in [\![1,d]\!]$ and
$s\in ]0,T[$ a.e.
\be
%\int_0^T ds
\mathbb{E} \left (\left \vert \frac{\mathop{div_y}\left(\widehat{\Sigma}_{i.}\left(s,Y_s\right)p_{s}\left(Y_s\right)\right)}{p_{s}\left(Y_s\right)}\partial_{i}\phi\left(Y_s\right)\right\vert \right) < \infty.
\ee
 On the other hand 
\be 
\label{E42bis}
\int^{T}_{0}\mathbb{E}\left\{Tr\left(\widehat{\Sigma}\left(s,Y_s\right)\nabla^2\phi\left(Y_s\right)\right)\right\}ds = \sum^{d}_{i,j=1}\int^{T}_{0}\int_{\R^d}\widehat{\Sigma}_{ij}\left(s,y\right)\partial_{ij}\phi\left(y\right)p_s\left(y\right)dyds.
\ee
 Previous expression is finite since $\Sigma$ is bounded on compact sets
and the partial derivatives of $\phi$ have compact supports.
With similar arguments we prove that
 %\begin{equation} \label{E42ter}
$ \int_0^T ds    \mathbb{E}\left\vert\left<\widehat{b}\left(s,Y_s\right),\nabla\phi\left(Y_s\right)\right>\right
\vert < \infty,\ s \in ]0,T[.$
%\end{equation}
 Moreover, fixing $s\in ]0,T[$ a.e., integrating by parts we have
\begin{align} \label{E42quater}
\mathbb{E}\left\{\left<\tilde{b}\left(s,Y_s;  p_s \right),\nabla\phi\left(Y_s\right)\right>\right\} &{}= \sum^{d}_{k,j=1}\int_{\R^d}\partial_k\left(\widehat{\Sigma}_{jk}\left(s,y\right)p_{s}\left(y\right)\right)\partial_j\phi\left(y\right)dy- \int_{\R^d}\left<\widehat{b}\left(s,y\right),\nabla\phi\left(y\right)\right>p_{s}\left(y\right)dy\\  
&{}= -\int_{\R^d}Tr\left(\widehat{\Sigma}\left(s,y\right)\nabla
^2\phi\left(y\right)\right)p_{s}\left(y\right)dy- \int_{\R^d}\left<\widehat{b}\left(s,y\right),\nabla\phi\left(y\right)\right>p_{s}\left(y\right)dy. \nonumber 
\end{align}
 Now, the quadratic variation of the local martingale $M^{Y} := \int^{\cdot}_{0}\nabla\phi\left(Y_s\right)^{\top}\sigma\left(s,Y_s\right)d\beta_s$ yields
\be
\left[M^{Y}\right] = \int^{\cdot}_{0}\nabla\phi\left(Y_s\right)^{\top}\Sigma\left(s,Y_s\right)\nabla\phi\left(Y_s\right)ds.
\ee
 We remark in particular that $\mathbb{E}\left(\left[M^Y\right]_T\right) < \infty$ since $\Sigma$ is bounded on compact sets and $\phi$ has compact support. 
This shows $M^Y$ is a true (even square integrable) martingale and all terms involved in \eqref{Ito} are integrable.

At this point we  evaluate    the expectation in \eqref{Ito} taking
the considerations above together with
\eqref{E42bis} and \eqref{E42quater} into account.
We obtain 
\be
\mathbb{E}\left(\phi\left(Y_{T-t}\right)\right) = \int_{\R^d}\phi\left(y\right)\mu\left(dy\right) - \int^{T-t}_{0}\int_{\R^d}L_{T-s}\phi\left(y\right)p_{s}\left(y\right)dyds. 
\ee
Applying  the change of variable $t \mapsto T-t$, we finally obtain the
identity
\be
\int_{\R^d}\phi\left(y\right)p_{T-t}\left(y\right)dy = \int_{\R^d}\phi\left(y\right)\mu \left(dy\right) - \int^{T}_{t}\int_{\R^d}L_{s}\phi\left(y\right)p_{T-s}\left(y\right)dyds, 
\ee
 which means that ${\bf p}\left(T-\cdot\right)$ solves the PDE \eqref{EDPTerm0} in the sense of Definition \ref{Def} with terminal value $\mu$.
\end{proof}

\subsection{Notion of existence and uniqueness 
for the McKean SDE in a given class }

\noindent We  provide the different notions of existence and uniqueness for \eqref{MKIntro} we will use in the sequel.
\begin{defi} \label{MKDSol}
 Let $\sha$ be a class
  of measure-valued functions from $[0,T]$ to $\shp\left(\R^d\right)$. 
\begin{enumerate}
\item We say that the SDE \eqref{MKIntro} admits {\bf existence in law} in 
 $\sha$, 
%of measure-valued functions  from $[0,T]$ to $\shp\left(\R^d\right)$ 
 if there exists a complete filtered probability space $\left(\Omega, \shf, \left(\mathcal{F}_t\right)_{t\in[0,T]}, \P\right)$ equipped with an $m$-dimensional $\left(\mathcal{F}_t\right)_{t\in[0,T]}$-Brownian motion $\beta$ and a couple $\left(Y,{\bf p}\right)$ solution of \eqref{MKIntro} in the sense of Definition \ref{MKSol} such that ${\bf p}$ belongs to $\sha$.
\item
% Let $\sha$ be a class  of measure-valued functions from $[0,T]$ to $\shp\left(\R^d\right)$. 
  Let $\left(Y^1,{\bf p^1}\right)$, $\left(Y^2,{\bf p^2}\right)$  be two solutions
of \eqref{MKIntro}  in the sense of Definition \ref{MKSol} associated to some complete filtered probability spaces $\left(\Omega^1, \shf^1, \left(\mathcal{F}^1_t\right)_{t\in[0,T]}, \P^1\right)$, $\left(\Omega^2, \shf^2, \left(\mathcal{F}^2_t\right)_{t\in[0,T]}, \P^2\right)$ respectively, equipped with Brownian motions $\beta^1,\beta^2$ respectively and such that ${\bf p^1},{\bf p^2}$ belong to $\sha$.
 We say that \eqref{MKIntro} admits {\bf uniqueness in law}
   in $\sha$, if  $Y^1_0,Y^2_0$ have the same law implies that $Y^1,Y^2$ have
 the same law.
\item We say that \eqref{MKIntro} admits {\bf strong existence} in 
  $\sha$
% of measure-valued functions from $[0,T]$ to $\shp\left(\R^d\right)$ 
if for  any complete filtered probability space $(\Omega, \shf, \left(\mathcal{F}_t\right)_{t\in[0,T]}, \P)$ equipped with an $m$-dimensional $\left(\mathcal{F}_t\right)_{t\in[0,T]}$-Brownian motion $\beta$, there exists a solution $\left(Y,{\bf p}\right)$ of equation \eqref{MKIntro} in the sense of Definition \ref{MKSol} such that ${\bf p}$ belongs to $\sha$.
\item We say that \eqref{MKIntro} admits {\bf pathwise uniqueness}
  in  $\sha$ of
% measure-valued functions from $[0,T]$ to $\shp\left(\R^d\right)$
  if for any complete filtered probability space $(\Omega, \shf, \left(\mathcal{F}_t\right)_{t\in[0,T]}, \P)$ equipped with an $m$-dimensional $\left(\mathcal{F}_t\right)_{t\in[0,T]}$-Brownian motion $\beta$, for any solutions
  $\left(Y^1,{\bf p^1}\right)$, $\left(Y^2,{\bf p^2}\right)$ of \eqref{MKIntro}  in the sense of Definition \ref{MKSol}
% associated to this probability space and this Brownian motion, 
such that $Y^1_0 = Y^2_0, \ \mathbb{P}\rm{-a.s.}$ and ${\bf p^1}, {\bf p^2}$ belong to $\sha$, we have $Y^1 = Y^2, \ \mathbb{P}\rm{-a.s.}.$
\item If the mention to a specific class $\sha$ is omitted 
as far as uniqueness (in law or pathwise), the class $\sha$ is
the one of all possible probability valued functions
 verifying \eqref{IdInt}.
\end{enumerate}

\end{defi}
 We finally define the sets in which we will
 formulate existence and uniqueness results in the sequel.
\smallbreak
\begin{notation} \label{NAC1_2}
  \begin{enumerate}
  \item For a given $\shc \subseteq \shp\left(\R^d\right)$, $\sha_{\shc}$
    denotes
    the set of measure-valued functions ${\bf p}$ from $[0,T]$ to $\shp\left(\R^d\right)$  such that ${\bf p}\left(T\right)$ belongs to $\mathcal{C}$.
 Furthermore, for a given measure-valued function 
${\bf p}: [0,T] \mapsto \shp\left(\R^d\right)$, we will write
%$b:[0,T] \times \R^d \rightarrow \R^d$ 
%where
\begin{equation} \label{EBP}
b(t,\cdot; {\bf p}_t ) := \left\{\frac{\mathop{div_y}\left(\widehat{\Sigma}_{i.}p_t\right)}{p_t}\right\}_{i\in[\![1,d]\!]},
\end{equation}
 for almost all $t \in [0,T]$ whenever $p_t$ exists and the 
right-hand side quantity of \eqref{EBP} is well-defined.
The function $(t,x) \mapsto b(t,x; {\bf p}_t)$ is defined on $[0,T] \times \R^d$ with values in  $\R^d$. 
\item
Let $\sha_1$ (resp. $\sha_2$)  denote the set of measure-valued functions from $[0,T]$ to $\shp\left(\R^d\right)$ ${\bf p}$ such that, for all $t \in [0,T[$, ${\bf p}\left(t\right)$ admits a density $p_t$ with respect to the Lebesgue measure on $\R^d$ and such that
% $\restriction{b^{\bf p}}{[0,T[\times\R^d}$ 
$(t,x) \mapsto b(t,x; {\bf p}_t )$
is locally bounded (resp. is locally Lipschitz in space with linear growth) on $[0,T[\times\R^d$.
\end{enumerate}
\end{notation}
%\begin{comment}
%The existence results for the SDE \eqref{MKIntro}  will be based on two 
%pillars: the reachability condition constituted by the existence of a solution
%of the Fokker-Planck PDE with terminal condition  and
 %the time-reversal techniques of \cite{haussmann_pardoux}. They follow from general 
%statements of Section \ref{MKEX}, in the Appendix.
 %The uniqueness results for 
 %the SDE \eqref{MKIntro}
%will be a consequence of
%Section \ref{MKUNIQ}.
%\end{comment}

\subsection{Well-posedness for the McKean SDE: the 
 bounded  coefficients case}
\label{SExamples44}

\smallbreak

 In this section, we state a significant result about 
existence  
and uniqueness in law together with pathwise uniqueness
 for the SDE \eqref{MKIntro}.
We exploit here in particular the uniqueness results related to
the PDE \eqref{EDPTerm0} obtained in Section \ref{S32} and Section \ref{SGP}.
As  far as uniqueness
is concerned, given a solution 
$(Y, {\bf p})$ of \eqref{MKIntro}, we insist
that the basic idea
consists in showing that ${\bf p}$ solves \eqref{EDPTerm0},
  see Proposition \ref{PProbRep}. At this point $Y$ solves
an ordinary SDE and we only need to show that the coefficients
fulfill the assumptions which guarantee uniqueness, see e.g.
Lemma \ref{FriedAr}.
%, at least in the
%case of time-homogeneous coefficients. This allows 
%finally Theorems \ref{TExUniq} and \ref{TExUniqBis} 
%to establish uniqueness in law.
On the other hand, the existence results 
for \eqref{MKIntro}
are based on the techniques of \cite{haussmann_pardoux}
of determining the dynamics of the time-reversal of a diffusion.

% combining Friedman and Aronson bounds on the fundamental solution of equation \eqref{Fokker}.
\smallbreak
\noindent  We formulate the following
hypothesis for the couple $(b, \Sigma)$, where we recall that $\Sigma =
\sigma \sigma^\top$.
\begin{ass}\label{smoothness}
 % \begin{enumerate}
  \smallbreak
 % \item Assumption \ref{Zvon3} holds.
 %  \item The functions $\sigma$ is Lipschitz (in space). 
  % \item 
  $\Sigma: [0,T] \times \R^d \rightarrow M_d(\R),$
   $b: [0,T] \times \R^d \rightarrow \R^d$)
  are Borel functions such that the following holds.
\begin{itemize}
\item  For each $t \in |0,T],$
   % and $\sigma$, $b$,
  $\left(\nabla_xb_i(t,\cdot) \right)_{i \in [\![1,d]\!]}$, 
 $\left(\nabla_x\Sigma_{ij}(t,\cdot)\right)_{i,j \in [\![1,d]\!]}$ 
 exist and they are continuous;
\item For each $t \in |0,T],$ $\nabla^2_x\Sigma(t,\cdot)$ exists
  and $\nabla^2_x\Sigma$ 
  is H\"{o}lder continuous in space with some exponent
     $\alpha \in ]0,1[$ uniformly in time.
\item $\nabla \Sigma$ and $\nabla b$ are uniformly bounded.
   \end{itemize}
   \end{ass}
\begin{ass}\label{smoothness1}
$\Sigma$ is supposed to be H\"{o}lder continuous in time.
    % with exponent $\frac{\alpha}{2}$. 

\end{ass}
The first step consists in proving existence and uniqueness in law for
 the SDE \eqref{MKIntro} in the 
class $\sha_1.$
For this we will state
 a fundamental lemma whose proof will appear
in the Appendix.
\begin{lem} \label{FriedAr}
\noindent Suppose the validity of
% Assumptions \ref{Lip1d},
Assumptions \ref{Zvon1a},
%\ref{Zvon1b}, PAS NECESSAIRE CAR INCLUSE DANS "smoothness1"
\ref{Zvon3},
%%%  LUCAS L'HYPOTHESE 1 DEJA COMPRISE
 \ref{smoothness}
and \ref{smoothness1}. Then, for all $\nu \in \shp\left(\R^d\right)$, ${\bf u}^\nu\left(t\right)$ admits a density $u^{\nu}\left(t,\cdot\right) \in \shc^1\left(\R^d\right)$ for all $t \in ]0,T]$.
Furthermore, for each compact $K$ of $]0,T] \times \R^d$, 
there are strictly positive constants $C^K_1, C^K_2, C^K_3$,
also depending on $\nu$ such that 
\begin{eqnarray} 
C^K_1 \le      u^\nu\left(t,x\right) &\leq&  C^K_2  \label{dens}   \\
  \left|\partial_iu^\nu\left(t,x\right)\right| &\leq& C^K_3, \ 
  i \in  [\![1,d]\!], \label{DerDens}
\end{eqnarray} 
for all $(t,x) \in K$.
\end{lem}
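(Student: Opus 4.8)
The plan is to exhibit the density explicitly through the fundamental solution (transition density) of the Fokker--Planck operator, and then to read off every stated estimate from the classical two-sided Gaussian bounds. The first step is to recast the adjoint operator \eqref{EqOpL*} in divergence form: a direct computation using the symmetry of $\Sigma$ gives
\[
L_t^* \eta = \sum_{i=1}^d \partial_i\left( \frac12 \sum_{j=1}^d \Sigma_{ij}(t,\cdot)\,\partial_j \eta + \left(\frac12 \sum_{j=1}^d \partial_j \Sigma_{ij}(t,\cdot) - b_i(t,\cdot)\right)\eta\right).
\]
By Assumptions \ref{Zvon3}, \ref{smoothness} and \ref{smoothness1} the principal part $\frac12\Sigma$ is uniformly elliptic, bounded and H\"older continuous in space and time, while the first-order coefficient $\frac12\sum_j\partial_j\Sigma_{ij}-b_i$ is bounded and Lipschitz (hence H\"older), and crucially \emph{no} zeroth-order term appears. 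I stress this rewriting because it is exactly what dispenses with any H\"older control on $\nabla_x b$: the troublesome $\mathrm{div}(b)$ contribution never enters, so the mere continuity and boundedness of $\nabla_x b$ assumed in \ref{smoothness} is sufficient.

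Under these hypotheses the classical theory of parabolic equations (Friedman's parametrix construction together with Aronson's estimates) furnishes a fundamental solution $\Gamma(t,y;s,x)$, $0\le s<t\le T$, which as a function of $(t,y)$ solves the forward equation $\partial_t\Gamma = L^*_t\Gamma$, is of class $\shc^1$ in $y$, and satisfies the two-sided Gaussian bound together with the first-order estimate
\[
\frac{c_1}{(t-s)^{d/2}}\, e^{-C_1\frac{|y-x|^2}{t-s}} \le \Gamma(t,y;s,x) \le \frac{c_2}{(t-s)^{d/2}}\, e^{-C_2\frac{|y-x|^2}{t-s}},
\]
\[
\left|\partial_{y_i}\Gamma(t,y;s,x)\right| \le \frac{c_3}{(t-s)^{(d+1)/2}}\, e^{-C_3\frac{|y-x|^2}{t-s}}, \quad i\in[\![1,d]\!],
\]
for positive constants depending only on $T$, the ellipticity constant and the H\"older norms of the coefficients. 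I would then define
\[
u^\nu(t,y) := \int_{\R^d} \Gamma(t,y;0,x)\,\nu(dx), \qquad (t,y)\in\,]0,T]\times\R^d,
\]
the upper bound making the integral finite, and dominated convergence combined with the derivative estimate legitimating differentiation under the integral, so that $u^\nu(t,\cdot)\in\shc^1(\R^d)$ for every $t\in\,]0,T]$.

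Next I would identify $u^\nu$ with the density of ${\bf u}^\nu$. Integrating the weak form of $\partial_t\Gamma=L^*_t\Gamma$ against $\nu(dx)$ and applying Fubini shows that $t\mapsto u^\nu(t,\cdot)\,dy$ is an $\shm_f(\R^d)$-valued solution of \eqref{Fokker} with initial datum $\nu$; by the uniqueness granted by Remark \ref{Runu} (through Lemma \ref{LC313}) it coincides with ${\bf u}^\nu$, so ${\bf u}^\nu(t)$ indeed admits $u^\nu(t,\cdot)$ as density. Now fix a compact $K\subset\,]0,T]\times\R^d$ and pick $t_0>0$, $R>0$ with $K\subset[t_0,T]\times\overline{B}_R$. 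Bounding the exponentials by $1$ and using $\nu(\R^d)=1$, the upper and gradient estimates give $u^\nu(t,y)\le c_2 t_0^{-d/2}=:C^K_2$ and $|\partial_i u^\nu(t,y)|\le c_3 t_0^{-(d+1)/2}=:C^K_3$ on $K$. For the lower bound, choose $R_0>0$ with $\nu(\overline{B}_{R_0})\ge\frac12$; then for $(t,y)\in K$ and $x\in\overline{B}_{R_0}$ one has $|y-x|\le R+R_0$ and $t\le T$, so the lower Gaussian estimate yields $u^\nu(t,y)\ge c_1 T^{-d/2}e^{-C_1(R+R_0)^2/t_0}\,\nu(\overline{B}_{R_0})\ge \frac12 c_1 T^{-d/2}e^{-C_1(R+R_0)^2/t_0}=:C^K_1>0$.

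The crux of the argument, and the main obstacle, is the \emph{lower} Gaussian bound for $\Gamma$ (Aronson's estimate): the upper and first-derivative bounds follow comparatively directly from the parametrix construction, whereas the correct exponential lower bound — which is precisely what guarantees the strict positivity \eqref{dens} of the density — rests on the full two-sided theory and is where the uniform ellipticity Assumption \ref{Zvon3} is indispensable. A secondary but essential point, already flagged above, is to invoke the coefficient regularity \emph{only} through the divergence-form expression, so that no H\"older hypothesis on the derivatives of the drift is required.
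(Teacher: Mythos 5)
Your overall strategy coincides with the paper's: represent the density through the fundamental solution integrated against $\nu$, identify the result with ${\bf u}^\nu$ by uniqueness of \eqref{Fokker}, and read \eqref{dens}--\eqref{DerDens} off Gaussian estimates; the compact-set computations at the end are correct. But there is a genuine gap at the central technical step. You assert that ``the classical theory (Friedman's parametrix construction together with Aronson's estimates) furnishes a fundamental solution'' that is simultaneously $\shc^1$ in $y$, satisfies the gradient upper bound, \emph{and} satisfies the two-sided Gaussian bound. These properties do not come from a single construction. The parametrix method applies to the operator in \emph{non-divergence} form and produces a classical fundamental solution $\Gamma$ carrying the upper bound and the gradient bound; Aronson's theory applies to the \emph{divergence} form and produces a \emph{weak} fundamental solution $\Theta$ carrying the two-sided bound but no $\shc^1$ regularity and no gradient estimate. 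A priori these are different objects, and the lower bound you need for \eqref{dens} is proved for $\Theta$, not for $\Gamma$. The paper devotes a substantial part of its proof to exactly this identification: it tests both kernels against smooth compactly supported right-hand sides $H$, observes that $(t,x)\mapsto\int_0^t\int_{\R^d}\Gamma(x,t,\xi,\tau)H(\tau,\xi)\,d\xi\,d\tau$ is a bounded weak solution of the Cauchy problem $\partial_t u=G_tu+H$, invokes uniqueness of weak solutions in Aronson's class $\she^2$ to get $\Gamma=\Theta$ a.e., and then uses continuity of $\Gamma$ to upgrade the identity everywhere. Without this (or an equivalent) argument, the lower Gaussian bound does not attach to the object that carries the $\shc^1$ regularity.

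A secondary point: your claim that the pure divergence-form rewriting ``dispenses with any H\"older control on $\nabla_x b$'' is only half right. It helps for the Aronson half of the argument, but the parametrix half --- the one delivering \eqref{DerDens} and the $\shc^1$ statement --- must be run on the non-divergence form, where $\mathop{div}(b)$ reappears as the zeroth-order coefficient; the drift derivatives are not avoided there. (Your divergence-form identity itself is correct, and in fact cleaner than the paper's version, which keeps a zeroth-order term $-\mathop{div}(b)f$ outside the divergence.) Finally, to conclude that $t\mapsto u^\nu(t,\cdot)\,dy$ solves \eqref{Fokker} in the sense of \eqref{weakbis} you still need the initial-time continuity $\int\phi(x)\,u^\nu(\epsilon,x)\,dx\to\int\phi\,d\nu$ as $\epsilon\to0^+$, which the paper obtains from the defining property of the fundamental solution together with Fubini and dominated convergence; this is routine but must be stated.
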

\begin{rem} \label{Runu}
 Under Assumptions
  % \ref{Lip1d}, 
\ref{Zvon1a}, \ref{Zvon1b}
(which is a consequence of Assumptions \ref{smoothness}
and \ref{smoothness1})
 together with
  \ref{Zvon3}, 
  %and
  %\ref{smoothness},
  for every $\nu \in \shp(\R^d),$
  by Lemma \ref{LC313},
  there exists a unique $ \shp\left(\R^d\right)$-valued solution
 %%% LUCAS A VOIR \shm_f\left(\R^d\right)$
  %$\shp\left(\R^d\right)$-valued solution
  ${\bf u}^\nu$ of the PDE \eqref{Fokker}.\\
 % Indeed 
% the assumptions of Lemma \ref{LC313} are fulfilled.
\end{rem}

\begin{lem} \label{P49}
  Let 
  % the initial condition $\mu$ equals
   $\mu $ be the probability measure introduced at the beginning of
  Section \ref{Prelim}.
  Suppose that  $\mu =
  {\bf u}^\nu\left(T\right)$ for some $\nu \in \shp\left(\R^d\right)$.
We assume the following.
\begin{enumerate}
\item  Assumptions \ref{Lip1d}, \ref{Zvon1a},
 \ref{Zvon3} and
 \ref{smoothness}.
\item ${\bf u}^\nu\left(t\right)$ admits a density
 $u^{\nu}\left(t,\cdot\right) \in  W^{1,1}_{\rm loc}(\R^d),$
for all $t \in ]0,T]$.
\item  For each compact $K$ of $]0,T] \times \R^d$, 
there are strictly positive constants $C^K_1, C^K_2, C^K_3$,
also depending on $\nu$ such that \eqref{dens} and \eqref{DerDens} hold
$\forall (t,x) \in K$.
\end{enumerate}
Then the SDE \eqref{MKIntro}  admits existence in law in $\sha_1$.
%Moreover, if $\nu$ is a Dirac mass, existence in law occurs in
% $\sha_{\left(\delta_x\right)_{x\in \R^d}}\cap\sha_1$.
\end{lem}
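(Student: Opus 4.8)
The plan is to reduce the statement to the \emph{in particular} part of Proposition \ref{MKEx_Prop}, taking $\shc := \shp(\R^d)$ and using ${\bf u}^\nu$ itself as the backward solution. First I would note that, since $\mu = {\bf u}^\nu(T)$ and the differential operator $L^*_t$ is common to the forward problem \eqref{Fokker} and the terminal-value problem \eqref{EDPTerm0}, the forward solution ${\bf u}^\nu$ is automatically a solution of \eqref{EDPTerm0} with terminal datum $\mu$ in the sense of Definition \ref{Def} (the weak formulations \eqref{weak} and \eqref{weakbis} being equivalent once one fixes ${\bf u}(T)=\mu$). Under Assumption \ref{smoothness} the coefficients $b,\sigma$ are bounded and Lipschitz in space, so Assumption \ref{Lip1d} holds; moreover Assumptions \ref{Zvon1} and \ref{Zvon3} are in force, whence Lemma \ref{LC313} (see also Remark \ref{Runu}) yields uniqueness of the $\shm_+(\R^d)$-valued solution of \eqref{Fokker} for every initial value in $\shp(\R^d)$. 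Thus Assumption \ref{GH1} holds with respect to $\shc=\shp(\R^d)$, and $\nu = {\bf u}^\nu(0)\in\shc$.

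Next I would verify Assumption \ref{MKEx_1} with respect to $(\shc,\mu)$ with ${\bf u}:={\bf u}^\nu$. Item 1 holds since ${\bf u}^\nu(0)=\nu\in\shp(\R^d)=\shc$, and the density statement in item 2 holds by hypothesis 2. The energy bound \eqref{HP} follows directly from hypothesis 3: on any compact $[t_0,T]\times K\subset\,]0,T]\times\R^d$ one has $|u^\nu(t,x)|^2\le (C^K_2)^2$ and $|\sigma_{ij}(t,x)\partial_i u^\nu(t,x)|^2\le \Vert\sigma\Vert_\infty^2 (C^K_3)^2$ by \eqref{dens}, \eqref{DerDens} and the boundedness of $\sigma$, so the integrand is bounded on a set of finite measure and \eqref{HP} is finite.

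The decisive step is checking Assumption \ref{MKEx_2}, i.e. that $\restriction{{\bf u}^\nu(T-\cdot)}{[0,T[\times\R^d}$ belongs to $\sha_1$. The measure $\mu={\bf u}^\nu(T)$ has a density by hypothesis 2, and for every $t\in[0,T[$ the marginal ${\bf p}(t):={\bf u}^\nu(T-t)$ has density $p_t=u^\nu(T-t,\cdot)$. Expanding the divergence in the definition \eqref{EBP}, I would write componentwise, in the distributional sense, $b_i(t,x;{\bf p}_t)=\sum_{j}\partial_j\widehat{\Sigma}_{ij}(t,x)+\sum_{j}\widehat{\Sigma}_{ij}(t,x)\frac{\partial_j u^\nu(T-t,x)}{u^\nu(T-t,x)}$. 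On any compact $K'\subset[0,T[\times\R^d$ the point $(T-t,x)$ ranges in a compact $K\subset\,]0,T]\times\R^d$; by \eqref{dens} the denominator is bounded below by $C^K_1>0$, by \eqref{DerDens} the numerators $|\partial_j u^\nu(T-t,x)|$ are bounded by $C^K_3$, while $\widehat{\Sigma}_{ij}$ and $\partial_j\widehat{\Sigma}_{ij}$ are bounded under Assumption \ref{smoothness}. Hence $(t,x)\mapsto b(t,x;{\bf p}_t)$ is locally bounded on $[0,T[\times\R^d$, so ${\bf u}^\nu(T-\cdot)$ restricted there lies in $\sha_1$ and Assumption \ref{MKEx_2} holds. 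I expect this to be the main obstacle: the local boundedness hinges entirely on the strictly positive lower bound $u^\nu\ge C^K_1$ in \eqref{dens}, which is exactly what prevents the McKean drift from blowing up where the density is small.

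Finally, with Assumptions \ref{Lip1d}, \ref{GH1} (for $\shc=\shp(\R^d)$), \ref{MKEx_1} and \ref{MKEx_2} all verified, Proposition \ref{MKEx_Prop} yields existence in law for \eqref{MKIntro} in $\sha_{\shc}\cap\sha_1$. Since $\shc=\shp(\R^d)$, the constraint ${\bf p}(T)\in\shc$ is vacuous and $\sha_{\shc}$ is the whole space of $\shp(\R^d)$-valued paths, so $\sha_{\shc}\cap\sha_1=\sha_1$, which is the asserted conclusion.
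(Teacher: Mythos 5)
Your proposal is correct and follows essentially the same route as the paper: reduce to Proposition \ref{MKEx_Prop} with $\shc=\shp(\R^d)$, take ${\bf u}={\bf u}^\nu$, verify \eqref{HP} from the upper bounds in \eqref{dens}--\eqref{DerDens} and the boundedness of $\sigma$, and obtain membership in $\sha_1$ (i.e.\ Assumption \ref{MKEx_2}) from the lower bound $u^\nu\ge C^K_1$ together with the boundedness of $\widehat\Sigma$ and its divergence. The only cosmetic difference is that the paper invokes Remark \ref{R1} 1.\ for Assumption \ref{GH1} where you invoke Lemma \ref{LC313}; both are valid under Assumption \ref{smoothness}.
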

% \begin{rem} \label{R11}
% Indeed, there is a solution $(Y,{\bf p})$ of \eqref{MKIntro} with 
%  $\nu (= {\bf p}\left(T\right))$ being the law of $Y_T$.
% \end{rem}
A consequence of the two lemmata above is the proposition below,
which states in particular existence in law in $\sha_1$.
\begin{prop}\label{CP49}
We suppose the validity of Assumptions \ref{Lip1d}, \ref{Zvon1a},
 \ref{Zvon3},
 \ref{smoothness}
  and 
  \ref{smoothness1}.
% \eqref{dens} and \eqref{DerDens}.
\begin{enumerate}
\item Suppose the existence of $\nu \in \shp(\R^d)$
  such that ${\bf u}^{\nu}(T) = \mu$.
   Then, 
the SDE \eqref{MKIntro}  admits existence in law in $\sha_1$.
Moreover, if $\nu$ is a Dirac mass, existence in law occurs in
$\sha_{\left(\delta_x\right)_{x\in \R^d}}\cap\sha_1$.
\item Otherwise  \eqref{MKIntro} does not admit existence in law.
 \end{enumerate} 
\end{prop}
\begin{rem} \label{RReach}
For a class of coefficients $b, \Sigma$, an interesting problem would be
to determine  the {\it reachability set} of possible $\mu$, i.e. of the set of
$\mu$ for which there exists $\nu$ such that
$ \mu  = {\bf u}^\nu$. This however goes beyond the scope of our paper. 
\end{rem}
 \begin{prooff}\ (of Proposition \ref{CP49}).
   \begin{enumerate}
     \item
   The first part is a direct consequence of Lemma \ref{FriedAr}, 
Lemma \ref{P49} and expression \eqref{EBP}.
 If in addition, $\nu$ is a Dirac mass,
   then ${\bf u}^\nu\left(0\right)$ belongs to $\shc := \left(\delta_x\right)_{x\in\R^d}$, hence existence in law occurs in $\sha_\shc\cap\sha_1$ by Proposition \ref{MKEx_Prop}
in the Appendix.
 \item Otherwise suppose ab absurdo that $\left(Y,{\bf p}\right)$ is a solution
   of the SDE\eqref{MKIntro}.
 By Proposition \ref{PProbRep}  ${\bf p}\left(T-\cdot\right)$ is a
  solution of the PDE \eqref{EDPTerm0}. We set $ \nu_0 = {\bf p}(T)$
  so that ${\bf p}(T-\cdot)$  verifies also the PDE \eqref{Fokker} with initial value
  $ \nu_0$. Since, by Lemma \ref{LC313} uniqueness holds for
  \eqref{Fokker}, it follows that  ${\bf p}(T-\cdot) =  {\bf u}^{\nu_0}$
  which concludes the proof of item 2.
     \end{enumerate}
 \end{prooff}
 \begin{prooff} \ (of Lemma \ref{P49}).
  \noindent Suppose $\mu = {\bf u}^\nu\left(T\right)$ for some $\nu \in \shp\left(\R^d\right)$.
 We recall that Property \ref{GH1} holds with respect to $\shc := \shp\left(\R^d\right)$ by
Lemma \ref{LC313}.
In view of applying again Proposition \ref{MKEx_Prop} stated in the Appendix,
we  need to check the validity of Property \ref{MKEx_1}
with respect  to $\shc$ and Property \ref{MKEx_2}.
%$(\mu,\shc)$.
 Property \ref{MKEx_1} is verified by ${\bf u} =   {\bf u}^\nu$. 
Indeed the function ${\bf u}^\nu$ is a $\shp\left(\R^d\right)$-valued solution
 of the PDE \eqref{EDPTerm0}
 with terminal value $\mu$ and such that ${\bf u}^\nu\left(0\right)$
 belongs to $\shc$.
 %Furthermore, Lemma \ref{FriedAr} implies that 
%${\bf u}^\nu\left(t\right)$ admits a density $u^\nu\left(t,\cdot\right)$
%for all $t \in ]0,T]$.
 Condition \eqref{HP} appearing in Property
 \ref{MKEx_1} is satisfied with ${\bf u} = {\bf u}^\nu$
% $\left(u^\nu\left(t,\cdot\right)\right)_{t\in]0,T[}$
 thanks to the right-hand side of inequalities \eqref{dens} and \eqref{DerDens}
%\eqref{PropFriedman_2}
 and the fact that $\Sigma$ is bounded.
 Hence Property \ref{MKEx_1} holds with respect to
 %$\left(\mu,
 $\shc$. 
 It remains to show Property \ref{MKEx_2} holds
i.e. that 
$$(t,x) \mapsto 
 \frac{\mathop{div_x} \left(\widehat{\Sigma}_{i.}(t,x)
 u^\nu(T-t,x)\right)}{u^\nu(T-t,x)}$$
is locally bounded on $[0,T[ \times \R^d$.
To achieve this, we fix $i \in [\![1,d]\!]$ and a bounded open 
subset $\mathcal{O}$ of
$[0,T[ \times \R^d$.
For $(t,x) \in \mathcal{O}$ we have
% $t \in ]0,T]$, $x \in \R^d$. 
%Using the left-hand side
% \eqref{dens} and \eqref{DerDens}%(precision : interversion derivee integrale car derivee de gamma unif bornee en x]:
%, we have
%\begin{align*} 
 $$ \left|\frac{\mathop{div_x}\left(\widehat{\Sigma}_{i.}\left(t,x\right) u^\nu\left(T-t,x\right)\right)}{u^\nu\left(T-t,x\right)}\right|  {} \leq \left|\mathop{div_x}\left(\widehat{\Sigma}_{i.}\left(t,x\right)\right)\right| +
 \left|{\widehat \Sigma}_{i.}\left(t,x\right)\right|\frac{\left|\nabla_x u^\nu\left(T-t,x\right)\right|}{u^\nu\left(T-t,x\right)}. 
%\\
% &{} \leq \left|\mathop{div_x}\left(\widehat{\Sigma}_{i.}\left(t,x\right)\right)\right| + d\frac{C_1 K_1}{\sqrt{t}}\frac{\left|{\widehat \Sigma}_{i.}\left(t,x\right)\right|}{\int_{\R^d}\exp\left(-\frac{K_2\left|x-\xi\right|^2}{4t}\right)\nu\left(\mathrm{d}\xi\right)}\\
% \end{align*}
$$
 The latter quantity  is locally bounded in $t,x$ thanks to
 the boundedness of $\Sigma,\mathop{div_x}\left(\widehat{\Sigma}_{i.}\right)$
and inequalities \eqref{dens} and \eqref{DerDens}.
 Hence, Property \ref{MKEx_2} holds. 
The application of  Proposition \ref{MKEx_Prop} ends the proof.
\end{prooff}
% \begin{comment}
% \begin{rem}\label{RStrongEx}
% FAIRE REMARQUE EXTENSION STRONG EXISTENCE AVEC THEOREM 7 P 260 \cite{friedman_1964}.
% \end{rem}
% \end{comment}

 \begin{prop} \label{TExUniq}
 ({\bf The McKean SDE: well-posedness in the case of Dirac initial conditions.})

   Suppose the validity of  Assumptions \ref{Lip1d}, \ref{Zvon1a},
%\ref{Zvon1b}, NADIA. COMPRISE dans "smoothness1""
 \ref{Zvon3},
 \ref{smoothness} and \ref{smoothness1}.
 The following results hold. %related to equation \eqref{MK}.
\begin{enumerate}
\item Let us suppose $d = 1$. Suppose $\mu$ equals
${\bf u}^{\delta_{x_0}} \left(T\right)$ for some
  $x_0 \in \R$. 
Then \eqref{MKIntro} 
%with initial law $u^{\delta_x}\left(T\right)$ 
admits existence and uniqueness in law in
 $\sha_{\left(\delta_x\right)_{x\in \R^d}}\cap\sha_1$, pathwise uniqueness in $\sha_{\left(\delta_x\right)_{x\in \R^d}}\cap\sha_2$.
\item Let $d \geq 2$. There is a maturity $T$ sufficiently small
(only depending on the Lipschitz constant of $b, \sigma$)
such that the following result holds.
 Suppose $\mu$ equals
${\bf u}^{\delta_{x_0}} \left(T\right)$ for some
  $x_0 \in \R^d$. 
 Then \eqref{MKIntro} 
%with initial law $u^{\delta_x}\left(T,\cdot\right)$ 
admits existence and uniqueness in law in
 $\sha_{\left(\delta_x\right)_{x\in \R^d}}\cap\sha_1$, pathwise uniqueness in $\sha_{\left(\delta_x\right)_{x\in \R^d}}\cap\sha_2$.

\end{enumerate}
\end{prop}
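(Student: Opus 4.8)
The plan is to assemble results already established in the excerpt: existence will be provided by Corollary \ref{CP49}, while both uniqueness statements will follow from Corollary \ref{Coro} once we have checked that the terminal-value uniqueness hypothesis, Assumption \ref{APDETerm}, holds with respect to $(\mu,\shc)$ for the class $\shc := (\delta_x)_{x\in\R^d}$. The only structural difference between items 1. and 2. will be which PDE-uniqueness theorem delivers Assumption \ref{APDETerm}: Proposition \ref{propLip1} in dimension one, valid for arbitrary $T$, versus Theorem \ref{propLipd} in dimension $d\ge 2$, valid only for $T$ small enough, which is precisely the source of the small-maturity restriction.

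First I would record that, under Assumptions \ref{smoothness} and \ref{smoothness1}, all the hypotheses needed downstream are in force. Since $\sigma$ is Lipschitz and bounded and $b$ is bounded with bounded space derivatives, the pair $b,\sigma$ is Lipschitz with linear growth, so Assumption \ref{Lip1d} holds; Assumptions \ref{Zvon1} and \ref{Zvon3} hold as well. By Remark \ref{R1} the martingale problem associated with $(b,\Sigma)$ is then well-posed for every deterministic initial condition, so Assumption \ref{GH1} is satisfied with respect to $\shc=(\alpha\delta_x)_{\alpha>0,x\in\R^d}$. Because $\nu=\delta_{x_0}$ is a Dirac mass and $\mu={\bf u}^{\delta_{x_0}}(T)$, Corollary \ref{CP49} item 1. immediately yields existence in law of \eqref{MKIntro} in $\sha_{(\delta_x)_{x\in\R^d}}\cap\sha_1$, in both dimensional regimes and for every $T$, the underlying density bounds of Lemma \ref{FriedAr} imposing no restriction on the horizon.

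It then remains to verify Assumption \ref{APDETerm}. For $d=1$, Proposition \ref{propLip1} applies (Assumption \ref{GH1} for $\shc=(\alpha\delta_x)_{\alpha>0}$ and Assumption \ref{Lip1d} both holding) and shows that \eqref{EDPTerm} with terminal value $\mu$ has at most one $\shm_+(\R)$-valued solution starting in $\shc$; a fortiori there is at most one $\shp(\R)$-valued solution starting in $(\delta_x)_{x\in\R}$, which is exactly Assumption \ref{APDETerm} for $(\mu,\shc)$. For $d\ge 2$, the same conclusion is furnished by Theorem \ref{propLipd}, either of whose alternatives (a) or (b) is available here, provided $T$ is chosen small enough in terms of the Lipschitz constants of $b,\sigma$. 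With Assumption \ref{APDETerm} secured, Corollary \ref{Coro} closes the argument: its item 1. gives uniqueness in law in $\sha_{(\delta_x)_{x\in\R^d}}\cap\sha_1$, and its item 2. gives pathwise uniqueness in $\sha_{(\delta_x)_{x\in\R^d}}\cap\sha_2$.

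Since the proof is essentially a matter of matching hypotheses, the main obstacle is merely bookkeeping the two distinct time horizons: existence through Corollary \ref{CP49} holds for all $T$, whereas for $d\ge 2$ the PDE uniqueness underpinning Assumption \ref{APDETerm} holds only for small $T$, so one must take $T$ no larger than the threshold of Theorem \ref{propLipd}, and confirm that the class $\sha_{(\delta_x)_{x\in\R^d}}\cap\sha_1$ in which existence is produced is the same one in which uniqueness in law is asserted.
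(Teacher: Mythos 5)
Your proposal is correct and follows essentially the same route as the paper's own proof: existence from Corollary \ref{CP49}, then verification of Assumption \ref{APDETerm} via Proposition \ref{propLip1} (for $d=1$) or Theorem \ref{propLipd} (for $d\ge 2$, whence the small-$T$ restriction), and finally Corollary \ref{Coro} for uniqueness in law and pathwise uniqueness. The extra bookkeeping you supply (deriving Assumptions \ref{Lip1d}, \ref{Zvon1}, \ref{Zvon3} and \ref{GH1} from Assumption \ref{smoothness}) is exactly what the paper leaves implicit.
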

\begin{proof} 
%%%%%%%%%%%%%%%%%%%%%%%%%%%%%%%%
\noindent
By Assumptions \ref{Lip1d}, \ref{Zvon1a},
%\ref{Zvon1b}, COMPRISE DANS''smoothness1'' 
\ref{Zvon3},
 \ref{smoothness} and \ref{smoothness1}, Proposition \ref{CP49}
implies that the SDE \eqref{MKIntro}
% with initial law $u^\nu\left(T,\cdot\right)$ 
admits existence in law in the two cases in the specific classes.
 %in $\sha_1$ for all $\nu \in \shp\left(\R^d\right)$ and existence in law occurs in $\sha_{\left(\delta_x\right)_{x\in \R^d}}\cap\sha_1$ in the particular case $\mu$ is a Dirac mass. 
 To check the uniqueness in law and pathwise uniqueness results, we wish to apply Corollary \ref{Coro} stated in the Appendix. It suffices now to check 
Property \ref{APDETerm} 
% because
%VERIFIER LUCAS
%the other hypotheses are included in Assumption \ref{smoothness}.
%and Assumption \ref{smoothness} is in force. 
%Below we verify Property \ref{APDETerm}
 with respect to $ (\delta_x)_{x\in \R}$,  for the separate
two cases.
\begin{enumerate}
\item Fix $x_0 \in \R^d$.
%$\left({\bf u}^{\delta_{x_0}}\left(T\right), 
%\left(\delta_x\right)_{x\in \R}\right)$.
This will follow from Proposition \ref{propLip1}
that holds under Assumption \ref{Lip1d}.
%which is a consequence of 
% Assumption \ref{smoothness}.
\item We proceed as  for previous case but applying Theorem \ref{propLipd}
instead of Proposition \ref{propLip1}.
\end{enumerate}
\end{proof}
\noindent 
Previous Proposition \ref{TExUniq} provides uniqueness
in law only among the solutions $(Y, {\bf p})$ such
${\bf p}$ belongs to a subclass of $\sha_1$.
We state now the two most important results of the section
which in particular provide uniqueness in law for the SDE \eqref{MKIntro}
among all possible solutions.

%MENTIONNER QUE NOUS AVONS BESOIN DE GARANTIR LA REGULARITE
%DE LA DENSITE (Aronson Friedman)

\begin{thm} \label{TExUniqBis}
  ({\bf The McKean SDE: well-posedness  in the case of non-degenerate time-homogeneous
    coefficients.})
  
  Suppose $b,\sigma$ are time-homogeneous,
 Assumptions \ref{Lip1d}, \ref{Zvon1a},
 \ref{Zvon3},
 \ref{smoothness}
  and suppose
  there is  $\nu \in \shp\left(\R^d\right)$ (a priori not known)
%**************** 
 such that
$\mu = {\bf u}^\nu\left(T\right)$.
\begin{enumerate}
\item  The SDE \eqref{MKIntro} admits existence and uniqueness in law.
  Moreover existence in law holds in $\sha_1$.
\item  \eqref{MKIntro} admits pathwise uniqueness in $\sha_2$.
\end{enumerate}
\end{thm}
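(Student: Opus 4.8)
The plan is to deduce the theorem from the existence statement of Corollary~\ref{CP49} and the uniqueness statements of Corollary~\ref{Coro}, the only genuine work being to check that time-homogeneity upgrades Assumption~\ref{smoothness} to the full set of hypotheses those corollaries demand. First I would note that, $b$ and $\sigma$ being time-homogeneous, $\Sigma=\sigma\sigma^\top$ is constant in time and hence trivially H\"older in time, so Assumption~\ref{smoothness1} holds; moreover Assumption~\ref{smoothness} makes each $b_i$ and $\Sigma_{ij}$ bounded with bounded space-gradient, hence Lipschitz and a fortiori in $\mathcal{C}^{2\alpha}(\R^d)$ for any $\alpha<\frac12$, which is precisely Assumption~\ref{Lun1}. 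Together with the non-degeneracy Assumption~\ref{Zvon3} we are then in the scope of Theorem~\ref{P315}, so \eqref{EDPTerm0} with terminal datum $\mu$ admits at most one $\shm_f(\R^d)$-valued solution; in particular Assumption~\ref{APDETerm} holds with respect to $(\mu,\shc)$ for the choice $\shc:=\shp(\R^d)$.

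Existence (item 1) is then immediate: Assumptions~\ref{smoothness} and \ref{smoothness1} are in force and $\mu={\bf u}^\nu(T)$ for some $\nu\in\shp(\R^d)$, so Corollary~\ref{CP49} applies verbatim and yields existence in law in $\sha_1$.

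For uniqueness in law I would proceed in two steps. Item~1 of Corollary~\ref{Coro} (whose hypotheses — $b$ locally bounded, $\sigma$ continuous, Assumption~\ref{Zvon3}, and Assumption~\ref{APDETerm} — are all met) yields uniqueness in law inside $\sha_\shc\cap\sha_1$, which for $\shc=\shp(\R^d)$ is simply $\sha_1$. To promote this to uniqueness among all solutions I would show that every solution's second marginal automatically lies in $\sha_1$. Indeed, if $(Y,{\bf p})$ is any solution, Proposition~\ref{PProbRep} shows that ${\bf p}(T-\cdot)$ is a $\shp(\R^d)$-valued solution of \eqref{EDPTerm0} with terminal value ${\bf p}(0)=\mu$, while ${\bf u}^\nu$ — the solution of \eqref{Fokker} with ${\bf u}^\nu(T)=\mu$ — is another such solution; Assumption~\ref{APDETerm} forces ${\bf p}(T-\cdot)={\bf u}^\nu$, i.e. ${\bf p}={\bf u}^\nu(T-\cdot)$. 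Lemma~\ref{FriedAr} then supplies the two-sided density bound \eqref{dens} and the gradient bound \eqref{DerDens} on every compact of $]0,T]\times\R^d$, and, exactly as in the proof of Lemma~\ref{P49}, these render the drift $(t,x)\mapsto b(t,x;{\bf p}_t)$ of \eqref{EBP} locally bounded on $[0,T[\times\R^d$, so ${\bf p}\in\sha_1$. Hence all solutions live in $\sha_1$ and the uniqueness in law of Corollary~\ref{Coro} is uniqueness \emph{tout court}, completing item~1. For the pathwise uniqueness in $\sha_2$ (item~2) I would simply invoke item~2 of Corollary~\ref{Coro}: Assumption~\ref{smoothness} makes $(b,\sigma)$ Lipschitz in space and bounded, hence locally Lipschitz with linear growth, and Assumption~\ref{APDETerm} has been verified, so pathwise uniqueness holds in $\sha_\shc\cap\sha_2=\sha_2$.

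The mechanism underlying both uniqueness claims is that Assumption~\ref{APDETerm} pins the flow ${\bf p}={\bf u}^\nu(T-\cdot)$ down independently of the chosen solution, thereby collapsing \eqref{MKIntro} into the ordinary SDE \eqref{FrozenSDE} with frozen drift $b(\cdot,\cdot;{\bf p}_\cdot)-\widehat b$ and diffusion $\widehat\sigma$, for which the classical well-posedness theory quoted in Corollary~\ref{Coro} applies. The main obstacle is therefore securing Assumption~\ref{APDETerm}: this is the role of the time-homogeneity hypothesis, which is exactly what brings the coefficients within reach of the analytic-semigroup uniqueness Theorem~\ref{P315}. A secondary technical point is to confirm that the a priori regularity of ${\bf u}^\nu$ provided by Lemma~\ref{FriedAr} suffices to place every solution in the class $\sha_1$, which is where the abstract uniqueness of Corollary~\ref{Coro} actually takes effect.
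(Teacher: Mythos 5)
Your proposal is correct and follows essentially the same route as the paper: existence via Corollary \ref{CP49} after observing that time-homogeneity yields Assumption \ref{smoothness1} for free, verification of Assumption \ref{APDETerm} through Theorem \ref{P315} (with the same check that bounded Lipschitz coefficients lie in $\mathcal{C}^{2\alpha}(\R^d)$, a point the paper leaves implicit), the same key step of forcing every solution's marginal flow to coincide with ${\bf u}^\nu(T-\cdot)$ and hence to lie in $\sha_1$ via Lemma \ref{FriedAr}, and finally Corollary \ref{Coro} for uniqueness in law and pathwise uniqueness. The only cosmetic difference is that you identify ${\bf p}(T-\cdot)$ with ${\bf u}^\nu$ using the backward uniqueness of Assumption \ref{APDETerm}, whereas the paper identifies it with ${\bf u}^{\nu_0}$, $\nu_0={\bf p}(T)$, via forward uniqueness; both are valid and lead to the same conclusion.
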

%%%%% LUCAS VOIR SI NE PAS ETEINDRE LA REMARQUE CONCERNANT LE POINT ******
%%ICI
\begin{proof}
\begin{enumerate}
\item
  \begin{enumerate}
    \item First, Assumption \ref{smoothness1} trivially holds since
  $b,\sigma$ are time-homogeneous. Then, point 1. of Proposition \ref{CP49} 
implies that
the SDE \eqref{MKIntro} admits existence in law (in $\sha_1$).
%since
%Assumptions \ref{Lip1d}, 
%\ref{Zvon3} and
% and \ref{smoothness}
 %  hold.
  \smallbreak
\item
 % For this, we first  show that every solution
%  Suppose that $\left(Y,{\bf p}\right)$ is a solution  of \eqref{MKIntro}
  %is such that
 % ${\bf p}$ belongs to $\sha_1$.
  Let $\left(Y,{\bf p}\right)$ be a solution of \eqref{MKIntro}. Proceeding as in the proof of item 2. of Proposition \ref{CP49}, we obtain that ${\bf p}(T-\cdot) =  {\bf u}^{ \nu_0}$ with $\nu_0 = {\bf p}\left(T\right)$.
  Then, Lemma \ref{FriedAr}
  shows  that ${\bf p}$ belongs to $\sha_1$, see
\eqref{EBP} in Notation \ref{NAC1_2}.
\item To conclude it remains to show
  uniqueness in law in $\sha_1$.
    For this we wish to apply point 1. of Corollary \ref{Coro} in the Appendix.
    % to check existence in law in $\sha_1$.
 To achieve this, we check Property  
 \ref{APDETerm} with respect to 
 $\shp(\R^d)$.
This is a consequence of  
 Assumptions \ref{Zvon1a}, \ref{Zvon1b},
 \ref{Zvon3} and \ref{Lun1}
and  Theorem \ref{P315}.
 This concludes the proof of item 1.
\end{enumerate}

\item Concerning pathwise uniqueness in $\sha_2$, we proceed as for uniqueness in law but applying point 2. of Corollary \ref{Coro} in the Appendix. This is valid since
  %Assumption \ref{smoothness} implies that $b$
  $\sigma$ are bounded and Lipschitz by Assumptions \ref{Lip1d}, \ref{Zvon1a}
  and \ref{Zvon1b}.
\end{enumerate}
\end{proof}
In the result below we extend Theorem \ref{TExUniqBis}
to the case when the coefficients $b,\sigma$ are piecewise
time-homogeneous.

\begin{thm} \label{TC313}
  ({\bf The McKean SDE: well-posedness with non-degenerate piecewise
    time-homogeneous
    coefficients.})

  Let $n \in \N^*$.
Let 
$ 0 = t_0 < \ldots < t_n = T$ be a partition. 
For  $k \in [\![2,n]\!]$ (resp. $k=1$) we
denote $I_k = ]t_{k-1},t_k]$ (resp. $[t_{0},t_1]$).
Suppose that
the following holds.
\begin{enumerate}
\item
For all $k \in [\![1,n]\!]$ 
   the restriction of $\sigma$ (resp. $b$)  to $I_k \times \R^d$
is a time-homogeneous function $\sigma^k: \R^d \rightarrow M_{d}(\R)$
(resp. $b^k: \R^d \rightarrow \R^d$).
\item Assumptions \ref{Zvon1a} and \ref{Zvon3}.
 %  $\Sigma$ ne doit pas etre continu. POUR Zvon1b
%\item Assumption \ref{Lun1} is verified for each
%$\Sigma^k$ and $b^k$.
\item $\sigma$ is Lipschitz (in space uniformly in time). 
\item Assumption \ref{smoothness} holds for the couples
  $(b^k, \Sigma^k)$.
  % \item The functions
   %   $\sigma^k$, $b^k$, $\left(\nabla_xb^k_i\right)_{i\in[\![1,d]\!]}$, $\left(\nabla_x\Sigma^k_{ij}\right)_{i,j \in [\![1,d]\!]}$ are continuous bounded 
   %   and  $\nabla^2_x\Sigma^k$ is H\"{o}lder continuous with exponent
   %   $\alpha \in ]0,1[$.
%     Furthermore, $\Sigma^k$ is supposed to be H\"{o}lder continuous in time
%     with exponent $\frac{\alpha}{2}$. 
\end{enumerate}
Suppose 
$\mu$ equals ${\bf u}^\nu(T)$ for some $\nu \in \shp\left(\R^d\right)$.
Then SDE
\eqref{MKIntro} admits
existence and uniqueness in law. Moreover, existence in law holds
in $\sha_1$.
% at most one solution in the sense of Definition \ref{Def}.
\end{thm}
\begin{rem} \label{RC313}
  A similar remark as in Proposition \ref{CP49} holds
  for the Theorems \ref{TExUniqBis} and \ref{TC313}.
  If there is no $\nu \in \shp(\R^d)$
  such that ${\bf u}^{\nu}(T) = \mu$,
   then \eqref{MKIntro} does not admit existence in law. 
\end{rem}

\begin{prooff} \
(of Theorem \ref{TC313}). 
  \noindent We recall that by Lemma \ref{LC313}, ${\bf u}^{\nu_0}$
  is well-defined for all $\nu_0 \in \shp\left(\R^d\right)$.
\begin{enumerate}
\item We first show that 
 ${\bf u}^{\nu_0}$ verifies \eqref{dens} and \eqref{DerDens}.
  Indeed, fix $k \in [\![1,n]\!]$.
The restriction ${\bf u_k} $ 
of ${\bf u}^{\nu_0}$ to $\bar I_k$ is a solution ${\bf v}$ of 
the first line of the PDE
\eqref{Fokker}
replacing $[0,T]$ with $\bar I_k$,
$L$ by $L^k$  defined in \eqref{OpLk}, with initial condition $ {\bf v}(t_{k-1}) = {\bf u}^{\nu_0}(t_{k-1})$.
That restriction is even the unique solution, using Lemma \ref{LC313}
replacing $[0,T]$ with $\bar I_k$.
We apply Lemma  \ref{FriedAr} replacing $[0,T]$ with $\bar I_k$,  
taking into account
Assumption \ref{smoothness1},
which holds trivially with respect to $\Sigma^k.$
%with 
%$\sigma, b, \Sigma$ with $\sigma^k, b^k, \Sigma^k$
%$L$ with $L_k$.
This implies that ${\bf u}^{\nu_0}$ verifies \eqref{dens} and \eqref{DerDens}
replacing $[0,T]$ with $\bar I_k$,
and  therefore on the whole $[0,T]$.
\item Existence in law in $\sha_1$, follows now  by Lemma \ref{P49}.
\item  It remains to show uniqueness in law.
  Let $\left(Y,{\bf p}\right)$ be a solution of the SDE \eqref{MKIntro}.
  We set  $\nu_0 := {\bf p}\left(T\right)$. Since ${\bf u}^{ \nu_0}$
  and ${\bf p}(T-\cdot)$ solve the PDE \eqref{Fokker}, Lemma \ref{LC313}
  implies that ${\bf p}$ is uniquely determined.
 Similarly as in item 1.(b) of the proof of Theorem 
\ref{TExUniqBis}, 
item 1. of the present proof and Lemma \ref{FriedAr} allow to show
that ${\bf p}$ belongs to $\sha_1$.
\item It remains to show uniqueness in law in $\sha_1$.
For this,  Corollary \ref{C313} implies
Property \ref{APDETerm} in the Appendix with $\shc = \shp(\R^d)$.
Uniqueness of \eqref{MKIntro} in the class $\sha_1$
follows now by Corollary \ref{Coro} in the Appendix, which ends the proof.
\end{enumerate}
\end{prooff}

\subsection{Well-posedness for the McKean SDE: the OU semigroup}

\label{Sex}

 In this section we investigate existence and uniqueness
for the SDE \eqref{MKIntro} in the context of an OU semigroup.
As for Section \ref{SExamples44}, the uniqueness statement for
the related PDE \eqref{EDPTerm0} (see Section \ref{S34}), 
appears to be crucial. 
The only limitation here is that the matrix function
$\Sigma$ has to be invertible, otherwise the additive drift 
in \eqref{MKIntro} would not be defined.

Suppose that $b$ is of the form $ \left(s,x\right) \mapsto C\left(s\right)x$ with $C$ continuous 
from $[0,T]$ to $\R^d$ and $\sigma$ continuous from $[0,T]$ to $M_d\left(\R\right)$. We also suppose that for all $t \in [0,T]$, $\Sigma\left(t\right)$ is invertible.
 We denote by
 $\shc(t):= (\shd(t)^{-1})^\top, t \in [0,T]$
where $\shd$ is the unique solution of \eqref{ED}. 
 Evaluating the
 transposed matrix on both sides of \eqref{ED-1},
we remark that $\shc$ is
   solution of the matrix-valued ODE,  
\be
\shc(t) = I + \int^{t}_{0}C(s)\shc(s)ds, \ t \in [0,T].
\ee
 For a given $x_0 \in \R^d$ and a given t $\in ]0,T]$, we denote by $p^{x_0}_t$ the density of a Gaussian random vector with mean  $m^{x_0}_t = \shc(t)x_0$ and covariance matrix $Q_t = \shc(t)\int^{t}_{0}\shc^{-1}(s)\Sigma(s)\shc^{-1}\left(s\right)^{\top} ds \ \shc(t)^{\top}$. Note that for all $t \in ]0,T]$, $Q_t$ is
strictly positive definite, in particular it is invertible.
 Indeed, for every $t \in [0,T]$,  $\Sigma(t)$ is strictly positive definite. By continuity in $t$,
$\int^{t}_{0}\shc^{-1}(s)\Sigma(s)\shc^{-1}\left(s\right)^{\top}ds$ is also strictly positive definite
and finally the same holds for $Q_t$.
For a given $\nu \in \shp\left(\R^d\right)$, $t \in ]0,T]$,
we set the notation 
\begin{equation}\label{Epnu}
p^\nu_t : x \mapsto \int_{\R^d}p^{x_0}_t\left(x\right)\nu\left(dx_0\right).
\end{equation}
%corresponding to Fourier transforms.
 At this level, we need a lemma.

\begin{lem}\label{OU_lemma}
 Let $\nu \in \shp\left(\R^d\right)$. The measure-valued function $t \mapsto p^\nu_t(x)dx  $
 is the unique solution of the PDE \eqref{Fokker} with initial value $\nu$.
 Consequently it coincides with  ${\bf u}^\nu$. Furthermore, ${\bf u}^\nu\left(T-\cdot\right)$ belongs to $\sha_2$.
\end{lem}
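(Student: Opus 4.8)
The plan is to treat the two assertions in turn, the first being a direct consequence of results already established and the second carrying the technical weight.

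\smallskip
\noindent\textbf{Identification of ${\bf u}^\nu$.} First I would recognise $p^{x_0}_t$ as the density of $X^{x_0}_t$, the strong solution of $dX_t=C(t)X_t\,dt+\sigma(t)\,dW_t$ started at $x_0$. The variation-of-constants formula $X^{x_0}_t=\shc(t)x_0+\shc(t)\int_0^t\shc^{-1}(s)\sigma(s)\,dW_s$ shows that $X^{x_0}_t$ is Gaussian with mean $m^{x_0}_t=\shc(t)x_0$ and covariance $Q_t$, matching the definition preceding \eqref{Epnu}. Taking $\xi\sim\nu$ and conditioning on $\{\xi=x_0\}$, the law of $X^{\xi}_t$ has density $p^\nu_t$. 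Since $b$ is affine and $\sigma$ continuous, Assumption \ref{Lip1d} holds, so Proposition \ref{PFundam} gives that $t\mapsto p^\nu_t(x)\,dx=\mathcal{L}(X^\xi_t)$ solves \eqref{Fokker} with initial value $\nu$; uniqueness is precisely Proposition \ref{FwdOU_Uniq}. This proves the first claim and legitimates the notation ${\bf u}^\nu(t)=p^\nu_t(x)\,dx$.

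\smallskip
\noindent\textbf{Reduction of the $\sha_2$ membership to a score estimate.} For $t\in[0,T[$ put $s:=T-t\in\,]0,T]$; then ${\bf u}^\nu(T-t)$ has the density $p^\nu_s$, which is smooth and strictly positive, being an average of nondegenerate Gaussians. Because here $\widehat\Sigma$ depends only on time, the divergence in \eqref{EBP} collapses and $b(t,\cdot\,;{\bf p}_t)=\widehat\Sigma(t)\nabla\log p^\nu_{T-t}$. Differentiating $p^{x_0}_s$ under the integral sign (licit thanks to the Gaussian decay, which dominates uniformly on compacts) yields the key identity $\nabla\log p^\nu_s(y)=-Q^{-1}_s\big(y-\shc(s)\bar{x}(s,y)\big)$, where $\bar{x}(s,y):=\int_{\R^d}x_0\,\pi_{s,y}(dx_0)$ is the mean of the Gaussian posterior $\pi_{s,y}(dx_0)\propto p^{x_0}_s(y)\nu(dx_0)\propto\exp\!\big(-\tfrac12(x_0-\shc^{-1}(s)y)^\top M(s)(x_0-\shc^{-1}(s)y)\big)\nu(dx_0)$, with $M(s):=\shc(s)^\top Q^{-1}_s\shc(s)$ symmetric positive definite. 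Equivalently, setting $m=\shc^{-1}(s)y$, one has $\bar x(s,y)-m=M(s)^{-1}\nabla\log q_s(m)$ where $q_s=\nu*\mathcal{N}(0,M(s)^{-1})$ is a Gaussian mollification of $\nu$. Thus membership in $\sha_2$ amounts to showing that $y\mapsto\widehat\Sigma(t)\nabla\log p^\nu_{T-t}(y)$ is locally Lipschitz with linear growth on $[0,T[\times\R^d$.

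\smallskip
\noindent\textbf{Local Lipschitz continuity.} Differentiating once more under the integral gives $\nabla_y\bar x(s,y)=\mathrm{Cov}_{\pi_{s,y}}(x_0)\,M(s)\,\shc(s)^{-1}$. The Gaussian tilting makes $\pi_{s,y}$ have finite moments of every order, and the posterior covariance is continuous in $(s,y)$, hence bounded on every compact subset of $]0,T]\times\R^d$. On a compact of $[0,T[\times\R^d$ the time $s=T-t$ stays bounded away from $0$, so $Q^{-1}_s$, $M(s)$ and $\shc(s)$ remain bounded, and $b(t,\cdot\,;{\bf p}_t)$ is therefore Lipschitz in space there.

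\smallskip
\noindent\textbf{Linear growth: the main obstacle.} The delicate point, since $\nu\in\shp(\R^d)$ is arbitrary with no moment assumption, is the linear growth. It suffices to bound $|y-\shc(s)\bar x(s,y)|=|\mathbb{E}_{\pi_{s,y}}[\,y-\shc(s)x_0\,]|$, equivalently the score of $q_s$. Writing this as $\big(\int(m-x_0)\gamma_s(m-x_0)\nu(dx_0)\big)\big/\big(\int\gamma_s(m-x_0)\nu(dx_0)\big)$, with $\gamma_s$ the density of $\mathcal{N}(0,M(s)^{-1})$ and $m=\shc^{-1}(s)y$, I would split the numerator over $\{|m-x_0|\le R\}$ and $\{|m-x_0|>R\}$. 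The near region contributes at most $R$ to the ratio; the far region is controlled by the Gaussian decay of $u\mapsto|u|\gamma_s(u)$ against the lower bound $\int\gamma_s(m-x_0)\nu(dx_0)\ge\delta_0\exp(-c|m|^2)$, obtained from a fixed ball carrying positive $\nu$-mass. Choosing $R^2$ of order $|m|^2$ makes the far contribution bounded while keeping the near contribution linear in $|m|$, whence $|y-\shc(s)\bar x(s,y)|\le C(1+|y|)$. All constants are continuous in $s$ and therefore uniform for $s\in[\epsilon,T]$, giving $|b(t,y;{\bf p}_t)|\le C_\epsilon(1+|y|)$ on $[0,T-\epsilon]\times\R^d$. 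Combined with the previous step this yields ${\bf u}^\nu(T-\cdot)\in\sha_2$. I expect this last estimate, valid for possibly heavy-tailed or singular $\nu$, to be the crux; the remainder is differentiation under the integral together with continuity and compactness.
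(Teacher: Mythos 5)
Your proof is correct, and its first half is exactly the paper's: identify $p^{x_0}_t$ as the Gaussian density of the strong solution of the linear SDE, obtain existence by Proposition \ref{PFundam} plus superposition over $\nu$, and uniqueness from Proposition \ref{FwdOU_Uniq}. For the membership of ${\bf u}^\nu\left(T-\cdot\right)$ in $\sha_2$ you take a genuinely different route. The paper works directly from the expression \eqref{div_OU}: it bounds $\left|x-m^{x_0}_{T-t}\right|p^{x_0}_{T-t}(x)$ uniformly in $x_0$ using the boundedness of $a\mapsto \left|a\right|e^{-ca^2}$, lower-bounds the denominator $p^\nu_{T-t}(x)$ on compacts via a compact set of positive $\nu$-measure, and then only asserts (``by technical but easy computations'') that the spatial derivatives are locally bounded, concluding that the drift is Lipschitz with linear growth \emph{on each compact}. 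You instead rewrite the drift as $\widehat{\Sigma}(t)\nabla\log p^\nu_{T-t}$ and express the score through the mean $\bar x(s,y)$ and covariance of the Gaussian-tilted posterior $\pi_{s,y}$; local Lipschitz continuity then follows from continuity of the posterior covariance, and your near/far splitting against the lower bound $\int\gamma_s(m-x_0)\nu(dx_0)\ge\delta_0 e^{-c\left|m\right|^2}$ yields the global-in-space estimate $\left|b(t,y;{\bf p}_t)\right|\le C_\epsilon\left(1+\left|y\right|\right)$ on $[0,T-\epsilon]\times\R^d$ for an arbitrary $\nu\in\shp\left(\R^d\right)$ with no moment assumption. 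This is the real added value: the paper's proof establishes only local bounds, whereas the linear-growth bound in the usual global-in-space sense is what one wants downstream to prevent explosion of the reversed SDE; the price you pay is the somewhat heavier posterior-mean formalism where the paper gets by with elementary Gaussian estimates.
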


\begin{proof}
%\noindent Fix $\nu \in \shp\left(\R^d\right)$. 
\begin{enumerate}
\item %Concerning existence, 
  %We denote immediately  ${\bf u}^\nu\left(t\right)\left(dx\right):= p^\nu_t(x) dx, \ t \in ]0,T]$.
By Chapter 5, Section 5.6 in \cite{karatshreve}, for every $t \in]0,T]$,
$p^{x_0}_t$ is the density of the random variable $X^{x_0}_t$, where $X^{x_0}$ is the unique strong solution of \eqref{EqLin} with initial value $x_0$.
The mapping $t \mapsto p^{x_0}_t(x) dx$ is a solution of \eqref{Fokker}
by Proposition \ref{PFundam}, with initial condition $\delta_{x_0}$.
Consequently, by superposition, $t \mapsto p^{\nu}_t(x) dx$
% ${\bf u}^\nu$
is a solution of the PDE
 \eqref{Fokker} with initial value $\nu$.
%%% OLD
%Consequently, by superposition, if $\xi$  is a r.v. distributed according to $\nu$,
% ${\bf u}^\nu$ is the law of the unique strong solution $X$ of \eqref{EqLin} with initial value $\xi$. 
 
%\smallbreak 
%\noindent Then, Ito formula implies that
%Consequently, by Proposition \ref{PFundam}, ${\bf u}^\nu$ solves \eqref{Fokker} with initial value $\nu$.
\item By  Proposition \ref{FwdOU_Uniq}, Property \ref{GH1}
  with respect to $\shc =\shp(\R^d)$
  is verified so that
  $t \mapsto p^{\nu}_t(x) dx$ is the unique solution of \eqref{Fokker}
  so that it coincides with
  ${\bf u}^\nu $.
\item It remains to show that ${\bf u}^\nu\left(T-\cdot\right)$ belongs to $\sha_2$, namely that for all $i \in [\![1,d]\!]$
\be
\left(t,x\right) \mapsto \frac{{\mathop {div_x}}\left(\Sigma\left(T-t\right)_{i\cdot}p^{\nu}_{T-t}\left(x\right)\right)}{p^{\nu}_{T-t}\left(x\right)},
\ee
 is locally Lipschitz with linear growth in space on $[0,T[\times\R^d$. 

Fix $i \in [\![1,d]\!]$, $t \in [0,T[$ and $x \in \R^d$.
    Remembering the fact that
 $p^{x_0}_{T-t}$ is a Gaussian law with mean $m^{x_0}_{T-t}$ and covariance matrix $Q_{T-t}$ for a given $x_0 \in \R^d$, we have 
\begin{equation}\label{div_OU}
\frac{{\mathop {div_x}}\left(\Sigma\left(T-t\right)_{i\cdot}p^{\nu}_{T-t}\left(x\right)\right)}{p^{\nu}_{T-t}\left(x\right)} = -\frac{1}{p^\nu_{T-t}\left(x\right)}\int_{\R^d}\left<\Sigma\left(T-t\right)_{i\cdot},Q^{-1}_{T-t}\left(x - m^{x_0}_{T-t}\right)\right>p^{x_0}_{T-t}\left(x\right)\nu\left(dx_0\right).
\end{equation} 
%On the one hand, the right-hand side of equality
 %\eqref{div_OU} is well-defined. Indeed,
% Cauchy-Schwarz inequality allows to bound the integrand 
% with respect to $\nu$ in the right-hand side 
% of \eqref{div_OU}.
Let $K$ be a compact subset of $]0,T] \times \R^d$;
then there is $M_K > 0$ such that for all $\left(t,x\right) \in  K$, $x_0 \in \R^d$, 
\begin{equation*}
\left|\left<\Sigma\left(T-t\right)_{i\cdot},Q^{-1}_{T-t}\left(x - m^{x_0}_{T-t}\right)\right>p^{x_0}_{T-t}\left(x\right)\right| \leq \left|\Sigma\left(T-t\right)_{i\cdot}\right|\left|\left|Q^{-1}_{T-t}\right|\right|\left|x - m^{x_0}_{T-t}\right|p^{x_0}_{T-t}\left(x\right) \le M_K.
\end{equation*}
This follows because  $t \mapsto \Sigma(T-t)$ and $t \mapsto Q^{-1}_{T-t}$
 are continuous on $[0,T[$ and,
 setting
 $$c_K := \inf\{ t \vert (t,x) \in K\}, \quad
%$y = x - m^{x_0}_{T-t}$, and
m_K: = \sup_{a \in \R} \vert a\vert \exp\left(- c_K\frac{a^{2}}{2}\right),$$
 we have
$$ |x - m^{x_0}_{T-t}| p^{x_0}_{T-t}(x) 
%y \vert \exp\left(- c_K\frac{\left|y\right|^2}{2}\right)
\le m_K,  \  \forall (t,x) \in K.$$
To show that left-hand side of \eqref{div_OU} is locally bounded on $[0,T[ \times \R^d$
it remains to show that $(t,x) \mapsto \int_{\R^d}  p^{x_0}_{T-t}(x) \nu(dx_0)$ is lower bounded on $K$. 
Indeed, let $I$ be a compact of $\R^d$. Since $(t,x,x_0) \mapsto p^{x_0}_{T-t}(x)$
is strictly positive and continuous is lower bounded by a constant $c(K,I)$.
The result follows choosing $I$ such that $\nu(I) > 0$.

 To conclude, it remains to show that the functions $(t,x) \mapsto \frac{{\mathop {div_x}}\left(\Sigma\left(T-t\right)_{i\cdot}p^{\nu}_{T-t}\left(x\right)\right)}{p^{\nu}_{T-t}\left(x\right)}, \ i \in [\![1,d]\!]$ defined on $[0,T[\times\R^d$ has locally bounded spatial derivatives, which implies that they are Lipschitz with linear growth on each compact of $[0,T[\times\R^d$.
By technical but easy computations, the result follows
using the fact the real functions $a \mapsto \vert a \vert^m \exp\left(-\frac{a^2}{2}\right)$, $m= 1,2,$
are bounded. 

\end{enumerate}

\end{proof}
\noindent We give now a global well-posedness result for the SDE
 \eqref{MKIntro}.
\begin{thm}\label{MKOU_WellP}
 ({\bf The McKean SDE: well-posedness in the Ornstein-Uhlenbeck case.})
  \begin{enumerate}
\item Suppose the initial condition $\mu$ equals ${\bf u}^\nu\left(T\right)$ for some $\nu \in \shp\left(\R^d\right)$. Then, equation \eqref{MKIntro} admits existence in law, strong existence, uniqueness in law and pathwise uniqueness. 
\item Otherwise \eqref{MKIntro} does not admit any solution.
\end{enumerate}
\end{thm}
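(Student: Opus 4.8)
The plan is to derive both items from the general machinery of Sections \ref{MKEX}--\ref{MKUNIQ}, feeding it with the Ornstein--Uhlenbeck-specific facts already collected in Lemma \ref{OU_lemma}, Proposition \ref{FwdOU_Uniq} and Theorem \ref{BwdOU_Uniq}. First I would record the coefficient regularity. Since $C$ and $\sigma$ are continuous on the compact $[0,T]$ they are bounded, so $b(s,x)=C(s)x$ and the $x$-independent $\sigma(s)$ are Lipschitz in space uniformly in time with linear growth (Assumption \ref{Lip1d}), and in fact locally Lipschitz with linear growth. The invertibility of $\sigma(t)$ makes $\Sigma(t)=\sigma(t)\sigma(t)^{\top}$ strictly positive definite for each $t$, and continuity on the compact $[0,T]$ upgrades this to the uniform ellipticity of Assumption \ref{Zvon3}. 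Finally, Assumption \ref{GH1} with $\shc=\shp(\R^d)$ is precisely the forward uniqueness provided by Proposition \ref{FwdOU_Uniq}.

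For existence in item 1, I would apply Proposition \ref{MKEx_Prop} to ${\bf u}={\bf u}^\nu$. Indeed ${\bf u}^\nu$ solves \eqref{Fokker} with ${\bf u}^\nu(0)=\nu\in\shp(\R^d)$, and since $\mu={\bf u}^\nu(T)$ it is a solution of the backward PDE \eqref{EDPTerm0} with terminal value $\mu$; for $t\in]0,T[$ it carries the explicit Gaussian-mixture density $p^\nu_t$ of \eqref{Epnu}. Condition \eqref{HP} of Assumption \ref{MKEx_1} holds because $p^\nu_t(x)$ and its spatial gradient are continuous, hence locally bounded, on $]0,T]\times\R^d$ (differentiation under the integral sign being legitimate thanks to the Gaussian tails and the uniform nondegeneracy of $Q_t$ on $[t_0,T]$) while $\sigma$ is bounded. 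Lemma \ref{OU_lemma} furnishes exactly Assumption \ref{MKEx_3}, namely ${\bf u}^\nu(T-\cdot)\in\sha_2$. Proposition \ref{MKEx_Prop} then yields strong existence in $\sha_{\shp(\R^d)}\cap\sha_2$, and a fortiori existence in law.

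The key structural observation for uniqueness, and the only genuinely non-mechanical point, is that \emph{every} solution automatically lies in $\sha_2$, so that the class-restricted conclusions of Corollary \ref{Coro} become unconditional. Given any solution $(Y,{\bf p})$, Proposition \ref{PProbRep} shows that $t\mapsto{\bf p}(T-t)$ solves \eqref{EDPTerm0} with terminal value $\mu$; by Theorem \ref{BwdOU_Uniq} this backward problem has at most one solution, which must be ${\bf u}^\nu$, whence ${\bf p}={\bf u}^\nu(T-\cdot)\in\sha_2\subseteq\sha_1$ by Lemma \ref{OU_lemma}. Theorem \ref{BwdOU_Uniq} also provides Assumption \ref{APDETerm} with respect to $(\mu,\shp(\R^d))$. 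I would then invoke Corollary \ref{Coro}: its item 1, whose hypotheses ($b$ locally bounded, $\sigma$ continuous, Assumption \ref{Zvon3}) were checked above, gives uniqueness in law in $\sha_{\shp(\R^d)}\cap\sha_1$, and its item 2, valid since $b,\sigma$ are locally Lipschitz with linear growth, gives pathwise uniqueness in $\sha_{\shp(\R^d)}\cap\sha_2$. Because all solutions lie in $\sha_2$, these restricted statements upgrade to full uniqueness in law and full pathwise uniqueness, finishing item 1.

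For item 2 I would argue by contradiction. If $(Y,{\bf p})$ were a solution, then by Proposition \ref{PProbRep} the map $t\mapsto{\bf p}(T-t)$ solves \eqref{EDPTerm0} with terminal value $\mu$, hence also solves the forward equation \eqref{Fokker} with initial value $\nu_0:={\bf p}(T)\in\shp(\R^d)$. Uniqueness for the forward problem (Proposition \ref{FwdOU_Uniq}, equivalently Lemma \ref{OU_lemma}) forces ${\bf p}(T-\cdot)={\bf u}^{\nu_0}$, so that $\mu={\bf u}^{\nu_0}(T)$, contradicting the standing assumption that $\mu$ is not of the form ${\bf u}^\nu(T)$; thus no solution exists. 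I expect the only steps requiring real care to be the verification of \eqref{HP} and of the regularity hypotheses feeding Corollary \ref{Coro}; everything else is bookkeeping layered on top of Lemma \ref{OU_lemma}, Proposition \ref{FwdOU_Uniq} and Theorem \ref{BwdOU_Uniq}.
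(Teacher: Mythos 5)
Your proposal is correct and follows essentially the same route as the paper: existence via Proposition \ref{MKEx_Prop} after checking Assumptions \ref{Lip1d}, \ref{GH1}, \ref{MKEx_1} and \ref{MKEx_3} through Lemma \ref{OU_lemma}, the observation that every solution's marginal flow is forced to be some ${\bf u}^{\nu_0}(T-\cdot)\in\sha_2$, and uniqueness via Assumption \ref{APDETerm} (from Theorem \ref{BwdOU_Uniq}) fed into Corollary \ref{Coro}. The only cosmetic difference is that you obtain uniqueness in law directly from item 1 of Corollary \ref{Coro}, whereas the paper deduces it from pathwise uniqueness together with Yamada--Watanabe; both are valid.
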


\begin{proof}
Item 2. can be proved using similar arguments as for the proof of Proposition \ref{CP49}.
Let $(Y, {\bf p})$ be a solution of \eqref{MKIntro} and set $ \nu_0 = {\bf p}(T)$.
By Proposition \ref{PProbRep},  ${\bf p}\left(T-\cdot\right)$ is a
  solution of the PDE \eqref{EDPTerm0},
  so that ${\bf p}(T-\cdot)$  verifies also the PDE \eqref{Fokker} with initial value
  $ \nu_0$. Since, by Proposition \ref{FwdOU_Uniq}, uniqueness holds for
  \eqref{Fokker}, it follows that  ${\bf p}(T-\cdot) =  {\bf u}^{\nu_0}$
  which concludes the proof of item 2.

  We prove now item 1. For this, taking into account Proposition \ref{MKProp},
  Yamada-Watanabe theorem and related results for classical SDEs,
it suffices to show strong existence
and pathwise uniqueness. 
We set  $\shc := \shp\left(\R^d\right).$
\begin{description}
\item{a)} Concerning the strong existence statement, we want to apply Proposition \ref{MKEx_Prop} stated in the Appendix.
 Since $b,\sigma$ are affine,  Assumption \ref{Lip1d} trivially
holds; Property \ref{GH1}  with respect to $\shc$ thanks to Proposition \ref{FwdOU_Uniq}.
It remains to verify Property \ref{MKEx_1} with respect to $\shc$
and Property
\ref{MKEx_3} (in the Appendix).
%  with respect to $\shc$.
  %$\left(\mu,\shc\right)$. 
%Furthermore, Property \ref{GH1} holds with respect to $\shc$ thanks to Proposition \ref{FwdOU_Uniq}. 
%\smallbreak
% Now, ${\bf u}^\nu$ is a probability valued solution of 
% \eqref{EDPTerm0} with terminal value $\mu$. Furthermore,

By Lemma \ref{OU_lemma},
%${\bf u}^\nu$
%being the unique solution 
%of solution of \eqref{Fokker}, is such that,
for all $t \in ]0,T]$,
${\bf u}^\nu(t)$
admits $p^\nu_t$ (see \eqref{Epnu}) for density. Then, relation \eqref{HP}
below holds since,
% ${\bf u}(T-\cdot)$ belongs to $\sha_2$ and therefore
by  \eqref{Epnu} and  the considerations above, 
$(t,x) \mapsto p^\nu_t(x)$ is locally bounded with locally bounded spatial derivatives. Hence, Property \ref{MKEx_1} holds with respect to
%$\left(\mu,\shc\right)$.
$\shc$.
Finally, Lemma \ref{OU_lemma} implies that ${\bf u}^\nu\left(T-\cdot\right)$ belongs to $\sha_2$. Hence, Property \ref{MKEx_3} holds with respect to $\shc$
and so  Proposition \ref{MKEx_Prop} implies
existence in law.

\item{b)} Let $\left(Y,{\bf p}\right)$ be a solution of equation \eqref{MKIntro}. Proposition \ref{PProbRep} implies that ${\bf p}\left(T-\cdot\right)$ solves \eqref{EDPTerm0}. Then, Proposition \ref{FwdOU_Uniq} gives ${\bf p}\left(T-\cdot\right) = {\bf u}^{\nu_0}$ with $\nu_0 = {\bf p}\left(T\right)$. Lemma \ref{OU_lemma} implies ${\bf p}$ belongs to $\sha_2$.
\item{c)} It remains to show pathwise uniqueness in $\sha_2$
  for which we will make use of Corollary \ref{Coro}, lying on 
Property \ref{APDETerm}, both stated in the Appendix.
Indeed we check that Property \ref{APDETerm} holds with respect
to
%$\left(\mu, \shc\right)$
$\shc$ thanks to Theorem \ref{BwdOU_Uniq}. Now, point 2 of Corollary \ref{Coro} implies pathwise uniqueness in $\sha_2$ since $b,\sigma$ are locally Lipschitz with linear growth in space. 
\end{description}

\end{proof}

% \end{comment}

\section{Appendix}

\setcounter{equation}{0}

For ease of reading the paper, we have postponed some technical results in this appendix. Sections
\ref{MKEX} and \ref{MKUNIQ} link the well-posedness of the PDE \eqref{EDPTerm0}
to the well-posedness of the McKean SDE \eqref{MKIntro}.
In particular Proposition \ref{MKEx_Prop} (resp. Corollary \ref{Coro})
links the existence (resp. uniqueness)
of the PDE \eqref{EDPTerm} with the SDE \eqref{MKIntro}.
Sections \ref{sec:ProofLemma1} and \ref{sec:ProofLemma2} give the proofs of two technical Lemma (Lemma~\ref{Lemma} and \ref{FriedAr}).

%\subsection{Reachability conditions for existence}
\subsection{PDE with terminal condition and existence 
for the McKean SDE} \label{MKEX}
\noindent 

% The existence result for equation \eqref{MKIntro}  will be based on two 
% pillars: the reachability condition constituted by the existence of a solution
% of the Fokker-Planck PDE with terminal condition  and
%  the time-reversal techniques of \cite{haussmann_pardoux}. 
%More precisely, 
We suppose that Property \ref{GH1} is in force for a 
fixed $\mathcal{C} \subseteq \mathcal{P}\left(\R^d\right)$ and consider the Property \ref{MKEx_1}
 with respect to $\shc$ and Properties
 \ref{MKEx_2} and  \ref{MKEx_3} related
to a given function $ {\bf u}: [0,T] \rightarrow \shm_+(\R^d)$.
%$(\shc, \mu)$ (resp. $\shc$).
\begin{property} \label{MKEx_1}
  \noindent
  %The backward PDE \eqref{EDPTerm0} with terminal condition $\mu$ admits at least an $\shm_+\left(\R^d\right)$-valued solution ${\bf u}$ in the sense of Definition \ref{Def} verifying the following.
\begin{enumerate}
\item ${\bf u}\left(0\right)$ belongs to $\mathcal{C}$.
%and ${\bf u}\left(T\right) = \mu$.
\item $\forall t \in ]0,T[$, ${\bf u}\left(t\right)$ admits a density
 with respect to the Lebesgue measure on $\R^d$ (denoted by $u\left(t,\cdot\right)$) and for all $t_0 > 0$ and all
 compact $K \subset \mathbb{R}^d$
\begin{equation} \label{HP}
\int^{T}_{t_0}\int_{K} \left|u\left(t,x\right)\right|^2 + \sum^{d}_{i=1}\sum^{d}_{j=1}\left|\sigma_{ij}\left(t,x\right)\partial_{i}u\left(t,x\right)\right|^2dxdt < \infty.
\end{equation}
\end{enumerate}
\end{property}
\begin{rem} \label{R45}
\noindent Suppose Assumption \ref{Lip1d} holds and let ${\bf u}$ be a measure-valued function verifying Property \ref{MKEx_1}. Then 
 \eqref{HP} implies that the family of densities $u\left(T-t,\cdot\right), t \in ]0,T[$
 verifies condition \eqref{IdInt} appearing in Definition \ref{MKSol}. 
  To show this, it suffices to check that for all $t_0 > 0$, all compact $K \subset \R^d$ and all $\left(i,j,k\right) \in [\![1,d]\!]^2\times[\![1,d]\!]$
\begin{equation}\label{Integr} 
\int^{T}_{t_0}\int_{K}\left|\partial_j\left(\sigma_{ik}\left(s,y\right)\sigma_{jk}\left(s,y\right)u\left(s,y\right)\right)\right|dyds < \infty.
\end{equation}
 The integrand
 appearing in \eqref{Integr} is well-defined. Indeed, in the sense of distributions we have
\begin{equation}\label{Deriv}
\partial_j\left(\sigma_{ik}\sigma_{jk}u\right) = \sigma_{ik}\sigma_{jk}\partial_ju + u\left(\sigma_{ik}\partial_j\sigma_{jk} + \sigma_{jk}\partial_j\sigma_{ik}\right);
\end{equation}
moreover the components of $\sigma$ are Lipschitz, so 
they are (together with their space derivatives) locally bounded. Also $u$
and $ \sigma_{jk}\partial_j u$ are square integrable by \eqref{HP}, which
implies \eqref{Integr}.

\end{rem}
We introduce two other properties possibly fulfilled by a function
${\bf u}: [0,T] \rightarrow \shm_+\left(\R^d\right)$.
% with respect to $\shc \subset \shp(\R^d)$.
 %The backward PDE \eqref{EDPTerm0} with terminal condition $\mu$ admits at least an $\shm_+\left(\R^d\right)$-valued solution ${\bf u}$ in the sense of Definition \ref{Def} verifying the following.xxxxxxxxxx
%xxxxxxxxxxxxxxx

\begin{property} \label{MKEx_2}
 %Let ${\bf u}$ be the measure-valued mapping appearing in
 %Property \ref{MKEx_1}.
  %We suppose that 
	${\bf u}(T)$ admits a density
  and $\restriction{{\bf u}\left(T-\cdot\right)}{[0,T[\times\R^d}$ belongs to $\sha_1$.  
 %  We suppose
%   %that $\mu$ admits a density and 
%  $\restriction{{\bf u}\left(T-\cdot\right)}{[0,T[\times\R^d}$ 
% %$t \mapsto T-\cdot,
% belongs to $\sha_1$ and ${\bf u}(T)$ admits a density.
\end{property}
\begin{property} \label{MKEx_3}
  % Let ${\bf u}$ be the measure-valued mapping appearing in Property \ref{MKEx_1}.
  %We suppose that 
${\bf u}(T)$ admits a density and $\restriction{{\bf u}\left(T-\cdot\right)}{[0,T[\times\R^d}$ belongs to $\sha_2$.
\end{property}
 We remark that Property \ref{MKEx_3} implies  \ref{MKEx_2}.

\begin{prop} \label{MKEx_Prop}
 Suppose the validity of Assumptions \ref{Lip1d}. 
We also suppose that the backward PDE \eqref{EDPTerm0} with terminal condition $\mu$ admits at least
 an $\shm_+\left(\R^d\right)$-valued solution ${\bf u}$ in the sense of Definition \ref{Def}, fulfilling
Property \ref{GH1} 
 and Property \ref{MKEx_1} with respect to $\shc$.
%Then, the following results hold.
%\begin{enumerate}
%\item
Then \eqref{MKIntro} admits existence in law in $\sha_{\shc}$. 
%\item

Moreover, if ${\bf u}$ fulfills 
Property \ref{MKEx_2}  (resp. \ref{MKEx_3})
then \eqref{MKIntro} admits existence in law in $\sha_{\shc}\cap\sha_1$
 (resp. strong existence in $\sha_{\shc}\cap\sha_2$). 
%\item If Assumption \ref{MKEx_3} holds, then \eqref{MKIntro} admits
% strong existence in $\sha_{\shc}\cap\sha_2$. 
%\end{enumerate}
\end{prop}
\begin{proof} 
%\begin{enumerate}
%\item
  \noindent %By Property \ref{MKEx_1},
Let ${\bf u}$ the function of the statement such that 
%  Since there is an $\shm_+\left(\R^d\right)$-valued solution ${\bf u}$ of equation \eqref{EDPTerm0} in the sense of Definition \ref{Def} such that ${\bf u}\left(T\right) = \mu$ and
fulfilling Property \ref{MKEx_1}, i.e.
${\bf u}\left(0\right)$ belongs to $\mathcal{C}$ 
We consider now
a filtered probability space $\left(\Omega, \shf, \left(\mathcal{F}_t\right)_{t\in[0,T]}, \P\right)$ equipped with an $\left(\mathcal{F}_t\right)_{t\in[0,T]}$-Brownian motion $W$. Let $X_0$ be a r.v. distributed according to ${\bf u}(0)$.
Under Assumption \ref{Lip1d}, it is well-known that there is a solution  $X$ to 
\begin{equation}\label{SDE}
X_t = X_0 + \int^{t}_{0}b\left(s,X_s\right)ds + \int^{t}_{0}\sigma\left(s,X_s\right)dW_s, \ t \in [0,T].
\end{equation}
\smallbreak
\noindent Now, by Proposition \ref{PFundam}, $t \mapsto \mathcal{L}\left(X_t\right)$ is a $\shp\left(\R^d\right)$-valued solution
of the PDE \eqref{Fokker} in the sense of \eqref{weakbis} with initial value ${\bf u}\left(0\right)\in\mathcal{C}$. 
Then Property \ref{GH1} for {\bf u} implies
\begin{equation} \label{MKIdLaw}
\mathcal{L}\left(X_t\right) = {\bf u}\left(t\right), \ t \in [0,T],
\end{equation}
 since ${\bf u}$ solves also the PDE \eqref{Fokker} with initial value ${\bf u}\left(0\right)\in\mathcal{C}$. This implies in particular that ${\bf u}$ is probability valued and that for all $t\in ]0,T[$, $X_t$ has $u\left(t,\cdot\right)$
as a density fulfilling
condition \eqref{HP}.

 Combining this observation with Assumption \ref{Lip1d}, Theorem 2.1 in \cite{haussmann_pardoux} 
states that there exists a filtered probability space
$\left(\Omega,\shg, (\shg_t)_{t\in[0,T]}, \Q\right)$ equipped with some
Brownian motion $\beta$ and a copy of  $\hat X$ (still denoted by the same letter) 
%adapted to the canonical filtration 
%of $\widehat{X} := X_{T-.}$ and s
such that $\widehat{X}$ fulfills the first line of the SDE \eqref{MKIntro} with $\beta$ and 
\begin{equation} \label{Eup}
{\bf p}\left(t\right) := {\bf u}\left(T-t\right), \ t \in ]0,T[.
\end{equation}
Finally, existence in law for the SDE \eqref{MKIntro} in the sense of Definition \ref{MKSol} holds since  $(\widehat{X}, {\bf u}\left(T-\cdot\right))$ is a solution of 
\eqref{MKIntro} on the same filtered probability space and the same Brownian
motion above. 
% $\left(\Omega,\mathcal{F}, \shf^{\widehat{X}}, \P\right)$ equipped with the Brownian motion $\beta$. 
This  occurs in $\sha_{\shc}$ since $\mathcal{L}\left(\widehat{X}_T\right) \in \shc$ thanks to 
equality \eqref{MKIdLaw} for $t = T$.

We discuss rapidly the {\it moreover} point.
\begin{itemize}
\item Suppose that {\bf u} fulfills
  Property \ref{MKEx_2}. Then ${\bf u}\left(T-\cdot\right)$ belongs to 
$\sha_\shc \cap \sha_1$ and we also have existence in law in $\sha_{\shc}\cap\sha_1$.
\item  Suppose that {\bf u} fulfills  
   Property \ref{MKEx_3}. Then,
  taking into account \eqref{Eup},
 strong existence and 
pathwise uniqueness for the
first line of \eqref{MKIntro}
  holds by classical arguments since the coefficients are locally
Lipschitz with linear growth, see \cite{RevuzYorBook}   Exercise (2.10),
and Chapter IX.2 and \cite{RevuzYorBook}, Th. 12.
section V.12. of \cite{rogers_v2}. 
By Yamada-Watanabe theorem this implies uniqueness in law, which shows 
that ${\bf u}\left(T-\cdot\right)$ constitutes the marginal laws
of the considered strong solutions.
This concludes the proof of strong existence
  in $\sha_{\shc}\cap\sha_2$ 
since ${\bf u}\left(T-\cdot\right)$ belongs to 
$\sha_\shc \cap \sha_2$, by Property \ref{MKEx_3}.
%\end{enumerate}
\end{itemize}
\end{proof}
\begin{rem} \label{RExistence2}
By \eqref{Eup},  the second component {\bf p}
of the solution of \eqref{MKIntro} is given by 
${\bf u}\left(T-\cdot\right).$

\end{rem}

\subsection{PDE with terminal condition and uniqueness for the McKean SDE} \label{MKUNIQ}

 In this subsection we discuss some questions related to uniqueness
for the PDE \eqref{MKIntro}. We consider the following Property 
related to
%$(\mu, \mathcal{C}) $
a given subset $\mathcal{C}$ of $\mathcal{P}\left(\R^d\right)$.

\begin{property} \label{APDETerm}
 The PDE \eqref{EDPTerm0} with terminal condition $\mu$ admits at most a
 $\shp\left(\R^d\right)$-valued solution ${\bf u}$ in the sense of Definition \ref{Def} such that ${\bf u}\left(0\right)$ belongs to $\mathcal{C}$.
\end{property}
 We recall that Section \ref{S32} provides various classes of examples 
where Property \ref{APDETerm} holds.
\begin{prop} \label{MKProp}
   Suppose the validity of Property  \ref{APDETerm} with respect to  $\shc$
   % $(\mu,\shc)$
   and suppose $b,\sigma$ to be locally bounded. % uniformly in time.
  % Suppose equation \eqref{BackwardFokker} admits at most one solution in the sense of Definition \ref{Def}.
 
 Let $\left(Y^i, {\bf p}^i\right), \ i \in \{1,2\}$ be two solutions of 
the SDE \eqref{MKIntro} in the sense of Definition \ref{MKSol} such that ${\bf p}^1\left(T\right), {\bf p}^2\left(T\right)$ belong to $\shc$.
 Then,
\be
{\bf  p}^1 = {\bf p}^2.
%\ {\rm a.e.}
\ee
\end{prop}
\begin{proof} \ 
%Let $\left(Y^1,{\bf p}^1\right), \left(Y^2,{\bf p}^2\right)$ be two solutions of \eqref{MKIntro} in the sense of Definition \ref{MKSol} such that ${\bf p}^1, {\bf p}^2$ belong to $\mathcal{C}$. 
 Proposition \ref{PProbRep} shows that ${\bf p}^1\left(T-\cdot\right), {\bf p}^2\left(T-\cdot\right)$ are $\shp\left(\R^d\right)$-valued solutions of the PDE
 \eqref{EDPTerm0} in the sense of Definition \ref{Def} with terminal value $\mu $. Property \ref{APDETerm} gives the result since ${\bf p}^1\left(T\right), {\bf p}^2\left(T\right)$ belong to $\shc$.
\end{proof}
 As a corollary, we establish some consequences about uniqueness in law and
 pathwise uniqueness results for the SDE \eqref{MKIntro} in the classes $\sha_1$ and $\sha_2$. 

\begin{corro} \label{Coro}
   Suppose the validity of Property \ref{APDETerm} with respect to $\shc$.
   %$(\mu,\shc)$.
   Then, the following results hold.
\begin{enumerate}
\item If $b$ is locally bounded,  $\sigma$ is continuous and if
  the non-degeneracy Assumption \ref{Zvon3} holds
    then the SDE 
\eqref{MKIntro} admits uniqueness in law in $\sha_{\shc}\cap\sha_1$.
  \item  If $b,\sigma$ are locally Lipschitz with linear growth in space, then \eqref{MKIntro} admits pathwise uniqueness in $\sha_{\shc}\cap\sha_2$.
\end{enumerate}
\end{corro}
\begin{proof}
  \noindent
  If $\left(Y,{\bf p}\right)$ is a solution of the SDE \eqref{MKIntro} and is such that
  ${\bf p}\left(T\right) $ belongs to $\shc$, then by  Proposition \ref{MKProp}
 ${\bf p}$ is determined by
  $\mu = \mathcal{L}\left(Y_0\right)$.

  To show that item 1. (resp. 2.) holds, it suffices to show that the classical SDE
\begin{equation} \label{FrozenSDE}
  dX_t =  \left(b(t,X_t; {\bf p}_t) -
    \widehat{b}(t,X_t)\right)
    dt + \widehat{\sigma}\left(t,X_t\right)dW_t, \ t \in [0,T[,
\end{equation}
where $b$ was defined in \eqref{EBP} and
 $W$ an $m$-dimensional Brownian motion, admits
uniqueness in law (resp. pathwise uniqueness).
The mentioned uniqueness in law 
%for equation \eqref{FrozenSDE} with assumptions of point 1.
is a consequence of Theorem 10.1.3 in \cite{stroock} and pathwise uniqueness 
%for \eqref{FrozenSDE} with assumptions of point 2.
 holds by \cite{RevuzYorBook} Exercise (2.10),
and Chapter IX.2 and \cite{rogers_v2} Th. 12.
Section V.12.
\end{proof}

\subsection{Proof of Lemma \ref{Lemma}}
\label{sec:ProofLemma1}

\begin{proof} 
%\ (of Lemma \ref{Lemma}).

\noindent For a given $\left(x,y\right) \in \R^d \times\R^d$ we set
\be
Z^{x,y}_t := X^{y}_t - X^{x} _t, t\in[0,T].
\ee
We have 
\begin{equation} \label{EZxy}
Z^{x,y}_t = y-x + \int^{t}_{0}B^{x,y}_r Z^{x,y}_rdr +\sum^{m}_{j=1} \int^{t}_{0}C^{x,y,j}_r Z^{x,y}_rdW^j_r, \ t\in[0,T],
\end{equation}
\noindent with, for all $r\in[0,T]$ 
\be
B^{x,y}_r := \int^{1}_{0}Jb\left(r,aX^{y}_r+(1-a)X^{x}_r\right)da, \quad
C^{x,y,j}_r  := \int^{1}_{0}J\sigma_{.j}\left(r,aX^{y}_r+(1-a)X^{x}_r\right)da, \forall \ j \in [\![1,m]\!].
\ee
%We fix $t\in[0,T]$ and $\left(x,y\right) \in \R^d\times\R^d$. It is sufficient to show that 
 By the classical existence and uniqueness theorem for SDEs with Lipschitz coefficients we know that
\begin{equation} \label{SQI}
\mathbb{E}(\sup_{s \leq T} \left|X^{z}_s\right|^2) < \infty, 
\end{equation}
 for all $z \in \R^d$.
This implies
\begin{equation} \label{sup}
\mathbb{E}(\sup_{t\in[0,T]}\left| Z^{x,y}_t \right|^2) < \infty.
\end{equation}
 Now, It\^{o}'s formula gives, for all $t \in [0,T]$
\begin{equation} \label{ItoSquareNorm}
\left|Z^{x,y}_t\right|^2 = \left|y-x\right|^2 + 2\int^{t}_{0}\left<B^{x,y}_rZ^{x,y}_r,Z^{x,y}_r\right>dr + \sum^{d}_{j=1}\int^{t}_{0}\left|C^{x,y,j}_rZ^{x,y}_r\right|^2dr + 2\sum^{d}_{i=1}M^{x,y,i}_t,
\end{equation}
 where, for a given $i \in [\![1,d]\!]$, $M^{x,y,i}$ denotes the local martingale $\int^{\cdot}_{0}Z^{x,y,i}_s\sum^{d}_{j=1}\left(C^{x,y,j}_sZ^{x,y}_s\right)_idW^j_s$.
\smallbreak 
\noindent Consequently, for all $i \in [\![1,d]\!]$, we have
\begin{align} \label{EMForm}
\sqrt{\left[M^{x,y,i}\right]_T} &{}= \sqrt{\sum^{d}_{j=1} \int^{T}_{0}\left(Z^{x,y,i}_r\right)^2\left(C^{x,y,j}_r Z^{x,y}_r\right)^2_idr}, \nonumber \\
&{}\leq \sqrt{\sum^{d}_{j=1} \int^{T}_{0}\left|C^{x,y,j}_r Z^{x,y}_r\right|^2\left|Z^{x,y}_r\right|^2dr}, \\
&{}\leq \sqrt{T\sum^{d}_{j=1} \left(K^{\sigma,j}\right)^2}\sup_{r\in[0,T]}\left|Z^{x,y}_r\right|^2.
\nonumber
\end{align}
 By the latter inequality and \eqref{sup}, we know
that $\mathbb{E}\left([M^{x,y,i}]_T^{\frac{1}{2}}\right) < \infty$, so for all
$i \in [\![1,d]\!]$, $M^{x,y,i}$ is a true  martingale.
Taking expectation in identity \eqref{ItoSquareNorm}, we obtain
\smallbreak
\noindent 
\be
\mathbb{E}\left(\left|Z^{x,y}_t\right|^2\right) = \left|y-x\right|^2 + \int^{t}_{0}\mathbb{E}\left(2\left<B^{x,y}_rZ^{x,y}_r,Z^{x,y}_r\right> + \sum^{d}_{k=1}\left|C^{x,y,k}_rZ^{x,y}_r\right|^2\right)dr.
\ee
 Hence, thanks to Cauchy-Schwarz inequality and to the definition of $K^b$ and $K^{\sigma,j}$ for all $j \in [\![1,d]\!]$
\be
\mathbb{E}\left(\left|Z^{x,y}_t\right|^2\right) \leq \left|y-x\right|^2 + K \int^{t}_{0}\mathbb{E}\left(\left|Z^{x,y}_r\right|^2\right)dr
\ee
 and we conclude via  Gronwall's Lemma.
 \end{proof}

\subsection{Proof of Lemma \ref{FriedAr}}
\label{sec:ProofLemma2}

 Let $\nu \in \shp\left(\R^d\right)$.
For each given $t \in [0,T]$, we denote by $G_t$ the differential operator such that for all $f\in\mathcal{C}^2\left(\R^d\right)$
\be
G_tf = \frac{1}{2}\sum^{d}_{i,j=1}\partial_{ij}\left(\Sigma_{ij}\left(t,\cdot\right)f\right) - \sum^{d}_{i=1}\partial_i\left(b_i\left(t,\cdot\right)f\right).
\ee
 Assumption \ref{smoothness} implies that for a given $f \in \mathcal{C}^{2}\left(\R^d\right)$, $G_tf$ can be rewritten in the two following ways:
\begin{equation} \label{Friedman}
G_tf =  \frac{1}{2}\sum^{d}_{i,j=1}\Sigma_{ij}(t,\cdot)\partial_{ij}f +
 \sum^{d}_{i=1}(\sum^{d}_{j=1}\partial_i\Sigma_{ij}(t,\cdot)- b_i(t, \cdot))\partial_if + c^1(t,\cdot)f,
\end{equation}
 with
 $$ c^1 : (t,x) \mapsto \frac{1}{2}\sum^{d}_{i,j=1}\partial_{ij}\Sigma_{ij}(t,x) - \sum^{d}_{i=1}\partial_ib_i(t,x).$$
\begin{equation} \label{Aronson}
G_tf =  \frac{1}{2}\sum^{d}_{i,j=1}\partial_j (\partial_i\Sigma_{ij} (t,\cdot)f + \Sigma_{ij}(t,\cdot)\partial_if - \sum^{d}_{i=1}b_i(t,\cdot)\partial_if) - \sum^{d}_{i=1}\partial_ib_i(t,\cdot)f.
\end{equation}
\smallbreak
\noindent On the one hand, combining identity \eqref{Friedman} with
 Assumptions \ref{Zvon1a}, \ref{Zvon1b},
 \ref{Zvon3} and \ref{smoothness}, there exists a fundamental solution $\Gamma$ (in the sense of Definition stated in Section 1. p.3 of \cite{friedman_1964}) of $\partial_tu = G_tu$, thanks to Theorem 10. Section 6 Chap. 1. in the same reference. Furthermore, there exists $C_1,C_2 > 0$ such that for all $i \in [\![1,d]\!]$, $x,\xi \in \R^d$, $\tau \in [0,T]$, $t > \tau$,
\begin{equation} \label{PropFriedman_1}
\left|\Gamma\left(x,t,\xi,\tau\right)\right| \leq C_1\left(t-\tau\right)^{-\frac{d}{2}}\exp\left(-\frac{C_2\left|x-\xi\right|^2}{4\left(t-\tau\right)}\right),
\end{equation}
\begin{equation}\label{PropFriedman_2}
\left|\partial_{x_i}\Gamma\left(x,t,\xi,\tau\right)\right| \leq C_1\left(t-\tau\right)^{-\frac{d+1}{2}}\exp\left(-\frac{C_2\left|x-\xi\right|^2}{4\left(t-\tau\right)}\right),
\end{equation}
\noindent thanks to identities (6.12), (6.13) in Section 6 Chap. 1 in \cite{friedman_1964}.

On the other hand, combining Identity \eqref{Aronson} with Assumption \ref{smoothness}, there exists a so called weak fundamental solution $\Theta$ of $\partial_tu = G_tu$ thanks to Theorem 5 in \cite{AronsonGeneral}. In addition, there exists $K_1,K_2,K_3 > 0$ such that for almost every $x,\xi \in \R^d$ , $\tau \in [0,T]$, $t \geq \tau$ 
\begin{equation}\label{PropAronson}
\frac{1}{K_1}\left(t-\tau\right)^{-\frac{d}{2}}\exp\left(-\frac{K_2\left|x-\xi\right|^2}{4\left(t-\tau\right)}\right) \leq \Theta\left(x,t,\xi,\tau\right) \leq K_1\left(t-\tau\right)^{-\frac{d}{2}}\exp\left(-\frac{K_3\left|x-\xi\right|^2}{4\left(t-\tau\right)}\right), 
\end{equation}
 thanks to point (ii) of Theorem 10 in \cite{AronsonGeneral}. 

 Our goal is now to show that $\Gamma$ and $\Theta$ coincide. To this end, we adapt the argument developed at the beginning of Section 7 in \cite{AronsonGeneral}. Fix a function $H$ from $[0,T]\times\R^d$ belonging to $\mathcal{C}^\infty_c\left([0,T]\times\R^d\right)$. Identity (7.6) in Theorem 12 Chap 1. Section 1. of \cite{friedman_1964} implies in particular that the function 
$$ u: \left(t,x\right) \mapsto \int^{t}_{0}\int_{\R^d}\Gamma\left(x,t,\xi,\tau\right)
H\left(\tau,\xi\right)d\xi d\tau,$$
 is continuously differentiable in time, two times continuously differentiable in space and is a solution of the Cauchy problem
\begin{equation}\label{CauchyPb}
\begin{cases}
\partial_tu\left(t,x\right) = G_tu\left(t,x\right) + H\left(t,x\right),
 \ \left(t,x\right) \in ]0,T]\times\R^d, \\
u\left(0,\cdot\right) = 0. 
\end{cases}
\end{equation} 
It is consequently also a weak (i.e. distributional)
solution of \eqref{CauchyPb},
which belongs to  $\she^2(]0,T]\times\R^d)$ 
(see definition of that space in \cite{AronsonGeneral})
since $u$ is bounded thanks to inequality \eqref{PropFriedman_1}
and the fact that $H$ is bounded.
 Then, point (ii) of Theorem 5 in \cite{AronsonGeneral}
says that
$$ (t,x) \mapsto \int^{t}_{0}\int_{\R^d}\Theta\left(x,t,\xi,\tau\right)H\left(\tau,\xi\right)d\xi d\tau$$
is the unique weak solution in $\she^2(]0,T]\times\R^d)$ of \eqref{CauchyPb}. 
This implies that for every $(t,x) \in ]0,T] \times \R^d$ we have 
\be
\int^{t}_{0}\int_{\R^d}\left(\Gamma - \Theta\right)\left(x,t,\xi,\tau\right)H\left(\tau,\xi\right)
d\xi d\tau = 0.
\ee
 Point (i) of Theorem 5 in \cite{AronsonGeneral} 
(resp inequality \eqref{PropFriedman_1})  implies that $\Theta$ (resp. $\Gamma$) belongs to $L^{p}\left(]0,T]\times\R^d\right)$ as a function of $(\xi,\tau)$, 
for an arbitrary $p \geq d + 2 $. Then, we conclude that
for all $\left(t,x\right) \in ]0,T] \times \R^d$,  
 \begin{equation} \label{coincide}
\Theta\left(x,t,\xi,\tau\right) = \Gamma\left(x,t,\xi,\tau\right), \
d\xi d\tau a.e. 
\end{equation}
for all $(\tau,\xi) \in [0,t[ \times \R^d$.
This happens by density of $\mathcal{C}^\infty_c\left([0,T]\times\R^d\right)$
 in $L^{q}\left(]0,T]\times\R^d\right)$, $q$ being the conjugate of $p$.

This, together with \eqref{PropAronson} and the fact that
$\Gamma$ is continuous in $(\tau,\xi)$ implies that 
\eqref{PropAronson} holds for all $(\tau,\xi)  \in [0,t[ \times  \R^d$
and therefore
\begin{equation}\label{PropAronsonBis}
\frac{1}{K_1}\left(t-\tau\right)^{-\frac{d}{2}}\exp\left(-\frac{K_2\left|x-\xi\right|^2}{4\left(t-\tau\right)}\right) \leq \Gamma \left(x,t,\xi,\tau\right) \leq K_1\left(t-\tau\right)^{-\frac{d}{2}}\exp\left(-\frac{K_3\left|x-\xi\right|^2}{4\left(t-\tau\right)}\right).
\end{equation}
% for all $(\tau,\xi)  \in [0,t[ \times  \R^d$.
We introduce
$$
q_{t} := x \mapsto \int_{\R^d}
 \Gamma\left(x,t,\xi,0\right)\nu\left(d\xi\right).
$$
 By \eqref{PropAronsonBis}, with $\tau = 0$ we get
%\noindent Combining identity \eqref{coincide} with the left-hand side of 
%inequality \eqref{PropAronson}, for all $\tau\in[0,T]$, for all 
%$\left(t,x\right) \in ]\tau,T] \times  \R^d$, we have
\begin{equation}
%\label{KeyIneg}
\label{PropFriedman_3}
q_{t}\left(x\right) \geq \frac{1}{K_1}t^{-\frac{d}{2}}\int_{\R^d}\exp\left(-\frac{K_2\left|x-\xi\right|^2}{4t}\right)\nu\left(d\xi\right).
\end{equation}
\smallbreak

\noindent We denote now by ${\bf v}^\nu$ the measure-valued mapping such that
 ${\bf v}^\nu\left(0,\cdot\right) = \nu$ and for all $t \in ]0,T]$, ${\bf v}^\nu\left(t\right)$ has density $q_{t}$ with respect to the Lebesgue measure on $\R^d$. We want to show that ${\bf v}^\nu$ is a solution of the PDE 
\eqref{Fokker} with initial value $\nu$ to conclude ${\bf u}^\nu = {\bf v}^\nu$ thanks to the 
 validity of Property \ref{GH1} because of Lemma \ref{LC313} and
 Assumptions \ref{Zvon1a}, \ref{Zvon1b} and \ref{Zvon3}.
 %%LUCAS VERIFIER
 %Remark \ref{R1} 1. and 3.
 To this end, we remark that the definition 
of a fundamental solution for $\partial_tu = G_tu $
says that
$u$ is a $C^{1,2}$ solution and consequently also 
a solution in the sense of distributions.
In particular  for all $\phi \in \mathcal{C}^{\infty}_c\left(\R^d\right)$, for all $t \geq \epsilon > 0$
\begin{equation} \label{NearFP} 
\int_{\R^d}\phi\left(x\right){\bf v}^\nu\left(t\right)\left(dx\right) = \int_{\R^d}\phi\left(x\right){\bf v}^\nu\left(\epsilon\right)\left(dx\right) + \int^{t}_{\epsilon}\int_{\R^d}L_s\phi\left(x\right){\bf v}^\nu\left(s\right)\left(dx\right)ds.
\end{equation}
 To conclude, it remains to send $\epsilon$ to $0+$. 
Theorem 15 section 8. Chap 1. and point (ii) of the definition stated p. 27 in \cite{friedman_1964}
 imply in particular that
%By Lemma 8 in \cite{AronsonGeneral}
 for all $\phi \in \mathcal{C}^\infty_c\left(\R^d\right)$, $\xi \in \R^d$, 
%[REMARQUE : citer directement Lemma 8 in \cite{AronsonGeneral}?]
\be
\int_{\R^d}\Gamma\left(x,\epsilon,\xi,0\right)\phi\left(x\right)dx \underset{\epsilon\to 0+}{\longrightarrow} \phi\left(\xi\right).
\ee
 Fix now $\phi \in \mathcal{C}^\infty_c\left(\R^d\right)$.
 In particular thanks to Fubini's theorem, \eqref{PropAronson} and
 Lebesgue's dominated convergence theorem we have
\begin{align*}
\int_{\R^d}\phi\left(x\right){\bf v}^\nu\left(\epsilon\right)\left(dx\right) &{}= \int_{\R^d}\phi\left(x\right)\int_{\R^d}\Gamma\left(x,\epsilon,\xi,0\right)\nu\left(d\xi\right)dx \\
&{} = \int_{\R^d}\int_{\R^d}\Gamma\left(x,\epsilon,\xi,0\right)\phi\left(x\right)dx\nu\left(d\xi\right) \\
&{} \underset{\epsilon \to 0+}{\longrightarrow}  \int_{\R^d}\phi\left(\xi\right)\nu\left(d\xi\right). 
\end{align*}
 By \eqref{NearFP} ${\bf v}^\nu$ is a solution
of the PDE \eqref{Fokker}
and consequently ${\bf u}^\nu = {\bf v}^\nu$, so that, for every $t \in ]0,T]$,
${\bf u}^\nu\left(t\right)$ admits $
u^\nu(t,\cdot) =  q_{t}$ for density with respect to the Lebesgue measure on $\R^d$. Now,
 integrating the inequalities \eqref{PropFriedman_1}, \eqref{PropFriedman_2} with respect to $\nu$ and combining this with inequality \eqref{PropFriedman_3}, we obtain
the existence of $K_1,K_2,C_1,C_2 > 0$ such that for all $t \in ]0,T]$, for all $x \in \R^d$, for all $i \in [\![1,d]\!]$
 \begin{equation*} 
 \frac{1}{K_1}t^{-\frac{d}{2}}\int_{\R^d} \exp\left(-\frac{K_2\left|x-\xi\right|^2}{4t}\right)\nu\left(d\xi\right)\leq u^\nu\left(t,x\right)\leq K_1t^{-\frac{d}{2}},
 \end{equation*}
 \begin{equation*}
 \left|\partial_iu^\nu\left(t,x\right)\right| \leq C_1t^{-\frac{d+1}{2}}.
 \end{equation*}
%\noindent This allows to show \eqref{dens} and \eqref{DerDens} since the latter three bounds define strictly positive functions of $(t,x)$ bounded on compact sets of $]0,T]\times\R^d$. This ends the proof.
Consequently, the upper bounds in \eqref{dens} and \eqref{DerDens} hold.
Concerning the lower bound in \eqref{dens}, let $I$ be a compact subset of $\R^d$ such that
$\nu(I) > 0$, the result follows since $(t,x,\xi) \mapsto \exp\left(-\frac{K_2\left|x-\xi\right|^2}{4t}\right)$ is strictly positive, continuous and therefore
lower  bounded by a strictly positive constant on $K\times I$ for each compact $K$ of $]0,T]\times\R^d$.

%QUOTE THE PAPER \cite{IOR_Controle}

\section*{Acknowledgments}

The authors are very grateful to the Referee for 
having read carefully the paper and  having contributed
to a significant improvement of the paper presentation.
The work was supported by a public grant as part of the
{\it Investissement d'avenir project, reference ANR-11-LABX-0056-LMH,
  LabEx LMH,}
in a joint call with Gaspard Monge Program for optimization, operations research and their interactions with data sciences.

\bibliographystyle{plain}
\bibliography{../../../../../BIBLIO_FILE/ThesisLucas}

\end{document}